\keywords{} 
\subjclass[2010]{}
\newcommand*{\ext}{\mathcal{E}\kern -.7pt xt}
\theoremstyle{plain}
\newtheorem{thm}{Theorem}[section]
\newtheorem{thml}{Theorem}
\newtheorem{prop}[thm]{Proposition}
\newtheorem{cor}[thm]{Corollary}
\newtheorem{lem}[thm]{Lemma}
\theoremstyle{definition}
\newtheorem{defn}[thm]{Definition}
\newtheorem{prob}[thm]{Problem}
\newtheorem{expl}[thm]{Example}
\newtheorem*{ackn}{Acknowledgment}
\newtheorem{rmk}[thm]{Remark}
\newcommand{\sA}{\mathcal{A}}
\newcommand{\sB}{\mathcal{B}}
\newcommand{\sC}{\mathcal{C}}
\newcommand{\sE}{\mathcal{E}}
\newcommand{\sF}{\mathcal{F}}
\newcommand{\sG}{\mathcal{G}}
\newcommand{\sI}{\mathcal{I}}
\newcommand{\sK}{\mathcal{K}}
\newcommand{\sL}{\mathcal{L}}
\newcommand{\sO}{\mathcal{O}}
\newcommand{\sQ}{\mathcal{Q}}
\newcommand{\sS}{\mathcal{S}}
\newcommand{\sU}{\mathcal{U}}
\newcommand{\sX}{\mathcal{X}}
\newcommand{\mC}{\mathbb{C}}
\newcommand{\mD}{\mathbb{D}}
\newcommand{\mL}{\mathbb{L}}
\newcommand{\mP}{\mathbb{P}}
\newcommand{\Kod}{\mathrm{Kod}\,}
\newcommand{\codim}{\mathrm{codim}\,}
\newcommand{\rank}{\mathrm{rank}\,}
\numberwithin{equation}{section}
\newcommand{\beba}  {\begin{equation}\begin{array}{rcl}}
\newcommand{\eaee}  {\end{array}\end{equation}}
\def\l@section{\@tocline{1}{0pt}{1pc}{}{}}
\def\l@subsection{\@tocline{2}{0pt}{1pc}{4.6em}{}}
\def\l@subsubsection{\@tocline{3}{0pt}{1pc}{7.6em}{}}
\renewcommand{\tocsection}[3]{%
  \indentlabel{\@ifnotempty{#2}{\makebox[2.3em][l]{%
    \ignorespaces#1 #2.\hfill}}}#3}
\renewcommand{\tocsubsection}[3]{%
  \indentlabel{\@ifnotempty{#2}{\hspace*{2.3em}\makebox[2.3em][l]{%
    \ignorespaces#1 #2.\hfill}}}#3}
\renewcommand{\tocsubsubsection}[3]{%
  \indentlabel{\@ifnotempty{#2}{\hspace*{4.6em}\makebox[3em][l]{%
    \ignorespaces#1 #2.\hfill}}}#3}
\title{Local systems, algebraic foliations, and fibrations}
\keywords{Semistable Fibrations, Foliations, Local systems, Castelnuovo-de Franchis Theorem, MRC fibration, Iitaka fibration}
\subjclass[2020]{14D06, 14E05, 14M22, 32M25}
\author{Luca Rizzi}
\address{Luca Rizzi\\ IBS Center for Complex Geometry, 55 EXPO-ro, Yuseong-gu, Daejeon, 34126, South Korea,
\texttt{lucarizzi@ibs.re.kr}}
\author{Francesco Zucconi}
\address{Francesco Zucconi\\Department of Mathematics, Computer Science and Physics \\
Universit\`a di Udine\\
Udine, 33100\\ Italia
\texttt{Francesco.Zucconi@dimi.uniud.it}}
\begin{document}

\markboth{}{}

\maketitle
\begin{abstract} Given a semistable fibration $f\colon X\to B$ we introduce a correspondence between foliations $\sF$ on $X$ and local systems $\mL$ on $B$. Building up on this correspondence we find conditions that give maximal rationally connected fibrations in terms of data on the foliation.
We prove the Castelnuovo-de Franchis theorem in the case of $p$-forms and we apply it to show when, under some natural conditions, a line subbundle of the sheaf of $p$-forms induces the Iitaka fibration.
%
\end{abstract}

\section{Introduction}
Let $f\colon X\to B$ be a (semistable) fibration between a smooth complex $n$-dimensional algebraic variety $X$ and a smooth curve $B$. Its relative tangent sheaf is a standard example of algebraically integrable foliation, and so are its algebraically integrable sub-foliations. On the other hand in \cite{RZ4} we have associated to $f\colon X\to B$ the local systems $\mD_{X}^k$ on $B$ given by the $k$-forms on the fibers of $f$ which are locally liftable to closed holomorphic $k$-forms on $X$,  $k=1,\cdots , n-1$. In this paper we merge these two dual approaches. Moreover we consider again $\mD_{X}^1$ but in light of the Castelnuovo-de Franchis theorem. 

%
\subsection{Foliations and local systems of $1$-forms}
The relative tangent sheaf $T_{X/B}$ is the foliation given by the kernel of the differential map $T_X\to f^*T_B$.
On the dual side, the local system $\mD_{X}^1$ is defined by the exact sequence
\begin{equation}
	\label{seqintro}
	0\to\omega_B\to f_{*}\Omega_{X,d}^1\to \mD_{X}^1\to 0
\end{equation}
where $\Omega_{X,d}^1$ is the sheaf of $d$-closed holomorphic $1$-forms on $X$: $\mD_{X}^1$ has been deeply studied in \cite{PT} if $n=2$. See also Section \ref{sez2} for the necessary background on foliations of vector fields and local systems of relative differential forms. 
\subsubsection{The correspondence}

In Section \ref{sez3} we show that each sub-foliation $\sF\subseteq T_{X/B}$  gives a local system $\mL_\sF\leq\mD_{X}^1$, and viceversa every local system $\mL\leq\mD_{X}^1$ gives a foliation $\sF_\mL\subseteq T_{X/B}$, hence we have a correspondence:
\begin{equation}
\label{corrispondeprima}
\big\{\text{foliations } \sF\subseteq T_{X/B}\big\}\stackrel[\beta]{\alpha}{\rightleftarrows} \big\{\text{local systems } \mL\leq \mD_{X}^1\big\}
\end{equation}
defined by $\alpha(\sF)=\mL_{\sF}$ and $\beta(\mL)=\sF_{\mL}$. In Proposition \ref{uguaglianzalocalsys} we show that in general these maps are not inverse of each other.

%

\subsubsection{The factorizing variety} In Section \ref{sez4} we solve the following problem. Let $Y$ be a normal $m$-dimensional variety which factors $f\colon X\to B$ as follows:
\begin{equation}
\xymatrix{
X\ar[r]^{h}&
Y\ar[r]^{g}&
B
}
\end{equation} 
where $g\circ h=f$ and $f,g$ are still fibrations. It is not difficult to see that in this case we have a sub local system $\mD_Y^1\leq\mD_{X}^1$ whose wedges satisfy some natural properties determined by the dimension of $Y$ and of the $g$-fibers. We ask to what extent we can obtain the viceversa in terms of local systems. More precisely, is it true that given a local system $\mL<\mD_X^1$ which mimic the properties of $\mD_Y^1$ then it gives back a variety $Y$ which factors $f\colon X\to Y\to B$? It turns out that this problem can be tackled in the framework presented in \cite{RZ4}.

The wedge product naturally gives maps $\bigwedge^{i} \mL\to  \mD_X^i$. The key definition is the following:
\begin{defn}
	We say that  $\mL$ is {\it{of Castelnuovo-type of order $m$}} if  $\rank\mL \geq m+1$, $\bigwedge^{m} \mL\to  \mD_X^m$ is zero while  $\bigwedge^{m-1} \mL\to  \mD_X^{m-1}$
	is injective on decomposable elements.
\end{defn}

\noindent This leads us to show:

\begin{thml}
	\label{thmA}
	Let $f\colon X\to B$ be a semistable fibration and $\mL\leq\mD_X$ a local system of Castelnuovo-type of order $m$. Then up to a covering $\widetilde{B}\to B$ and base change $\tilde{f}\colon \widetilde{X}\to \widetilde{B}$ there exist a normal $m$-dimensional complex space $\widetilde{Y}$ and holomorphic maps $\tilde{g}\colon\widetilde{Y}\to\widetilde{B}  $ and $\tilde{h}\colon \widetilde{X}\to\widetilde{B}$ such that $\tilde{f}=\tilde{h}\circ \tilde{g}$. Furthermore if  $\sF_{\mL}$ is the foliation associated to $\mL$ then it is algebraically integrable.
\end{thml}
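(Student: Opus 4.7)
The plan is to apply a relative version of the generalized Castelnuovo--de Franchis theorem fiberwise over $B$, after trivializing the monodromy of the local system $\mL$.

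Since $\mL$ is a local system of finite rank on the curve $B$, its monodromy representation factors through a finite quotient after passage to a suitable covering $\widetilde B\to B$, on which the pullback $\widetilde{\mL}$ becomes the constant local system. Let $\tilde f\colon \widetilde X\to \widetilde B$ denote the base change of $f$. By the defining sequence \eqref{seqintro} for $\mD^1$ and local liftability, the flat global sections of $\widetilde{\mL}$ are represented by $d$-closed holomorphic $1$-forms $\omega_1,\ldots,\omega_r$ on $\widetilde X$, with $r\ge m+1$, well defined modulo pullbacks from $\widetilde B$. The Castelnuovo-type hypothesis of order $m$ says exactly that every $m$-fold wedge $\omega_{i_1}\wedge\cdots\wedge\omega_{i_m}$ lies in the kernel of $\tilde f_{*}\Omega^m_{\widetilde X,d}\to \mD^m_{\widetilde X}$, while the decomposable $(m-1)$-fold wedges remain nonzero in $\mD^{m-1}_{\widetilde X}$; restricted to a general fiber $\widetilde X_b$, this produces $r\ge m+1$ linearly independent closed holomorphic $1$-forms whose $m$-fold wedges all vanish and whose decomposable $(m-1)$-wedges do not.

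The classical generalized Castelnuovo--de Franchis theorem then yields, on each such fiber, a surjective holomorphic map $\widetilde X_b\to Y_b$ onto a normal $(m-1)$-dimensional variety through which the $\omega_i|_{\widetilde X_b}$ factor. Because the $\omega_i$ are global on $\widetilde X$ and $\widetilde{\mL}$ is trivialized, these fiberwise constructions assemble into a single surjective holomorphic map $\tilde h\colon \widetilde X\to \widetilde Y$ onto a normal $m$-dimensional complex space fibered over $\widetilde B$ with $(m-1)$-dimensional fibers; the compatibility of the $\omega_i$ with the relative structure over $\widetilde B$ provides the factorizing morphism $\tilde g\colon \widetilde Y\to \widetilde B$ with $\tilde f=\tilde g\circ\tilde h$.

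For the final assertion, the correspondence \eqref{corrispondeprima} identifies $\sF_{\mL}=\beta(\mL)$ with the sub-foliation of $T_{\widetilde X/\widetilde B}$ annihilated by the images of the sections of $\mL$; in view of the factorization through $\widetilde Y$ this coincides with the kernel of $d\tilde h$ restricted to $T_{\widetilde X/\widetilde B}$, so the leaves of $\sF_{\mL}$ are precisely the fibers of $\tilde h$ inside the fibers of $\tilde f$, which are algebraic, hence $\sF_{\mL}$ is algebraically integrable. The main obstacle is the gluing step: passing from the fiberwise Castelnuovo--de Franchis maps $\widetilde X_b\to Y_b$ to a single global factorization $\tilde f=\tilde g\circ\tilde h$ through a normal complex space $\widetilde Y$, and controlling the behavior over the singular fibers of the semistable fibration. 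The relative local-system framework of \cite{RZ4}, together with the correspondence set up in Section \ref{sez3}, is what makes this globalization manageable.
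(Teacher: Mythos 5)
Two points in your argument are genuine gaps. First, your opening step asserts that the monodromy of $\mL$ ``factors through a finite quotient'' so that a suitable covering $\widetilde B\to B$ trivializes $\mL$. There is no reason for the monodromy group to be finite: the covering one must take is the one classified by the kernel $H_\mL$ of the monodromy representation, and the quotient $\pi_1(B,b)/H_\mL$ may well be infinite. This is not a cosmetic issue: when the covering is infinite, $\widetilde B$ and $\widetilde X$ are non-compact complex spaces, one loses algebraicity of $\widetilde X$ and $\widetilde Y$ (this is exactly why the theorem only claims a normal complex space, cf.\ Remark \ref{algebrico}), and one must insist that the lifted $1$-forms are $d$-closed for the Castelnuovo--de Franchis foliation argument to still work in the non-compact setting. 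Your proof silently assumes compactness/finiteness and never confronts this.

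Second, and more seriously, the heart of your argument --- assembling the fiberwise Castelnuovo--de Franchis quotients $\widetilde X_b\to Y_b$ into a single normal complex space $\widetilde Y$ with $\tilde f=\tilde g\circ\tilde h$ --- is exactly what you label ``the main obstacle'' and then do not prove; invoking ``the relative framework of \cite{RZ4}'' is not an argument. The paper's proof avoids fiberwise application altogether: Lemma \ref{tecnico} upgrades the hypothesis that all $m$-fold wedges of the lifted forms vanish on the fibers to the vanishing of all $(m+1)$-fold wedges on the total space $\widetilde X$, so one can apply the (global) generalized Castelnuovo--de Franchis theorem on $\widetilde X$ itself. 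This produces either a map from $\widetilde X$ to an $m$-dimensional normal complex space directly --- and the factorization over $\widetilde B$ is then obtained by an incidence-variety argument using connectedness of the fibers of $\tilde h$ and the fact that $\tilde h(\widetilde F)$ is $(m-1)$-dimensional --- or, in the degenerate case where some $m$ of the lifted forms wedge to zero, a map to an $(m-1)$-dimensional space $\widetilde Z$, and $\widetilde Y$ is then defined as the Stein factorization of $\tilde h'\times\mathrm{id}$ into $\widetilde Z\times\widetilde B$. Your proposal contains neither this case division nor any substitute for the gluing, so as written it does not establish the existence of $\widetilde Y$. (Your sketch of the algebraic integrability of $\sF_\mL$ is closer to the paper's: there one shows $T_{\widetilde X/\widetilde Y}=p^*\sF_\mL$ because both have rank $n-m$ and one contains the other on an open set, and algebraicity of $T_{\widetilde X/\widetilde Y}$ comes from the fibers of $\tilde f$ and their images being algebraic; but this step only makes sense once $\widetilde Y$ has actually been constructed.)
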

\noindent

See Corollary \ref{castel1}. Theorem \ref{thmA} can also be generalized in the case of a flag of local systems $\mL_{s}<\dots<\mL_{2}<\mL_{1}<\mD_{X}^1$, see Corollary \ref{flag}.

\subsubsection{Local system versus algebraically integrable foliation}

On the other hand it is well-known that an algebraically integrable foliation $\sF$ contained in $T_{X/B}$ allows to recover an intermediate variety $Y$ up to birational equivalence; for example: see \cite[Lemma 4.12]{L} recalled in  Lemma \ref{lasic}.
 We show the following comparison theorem:

\begin{thml}
	\label{thmB}
	Let $f\colon X\to B$ be a semistable fibration and $\mL$ a Castelnuovo-type local system of order $m$. Then $\mL$ gives normal variety $Y'$ and complex spaces $\widetilde{Y}$, and $\widehat{Y}$, all of dimension $m$, which fit into the following diagram
	\begin{equation}
		\xymatrix{
			\widetilde{X}\ar[d]\ar[r]& X\ar[dd]& X'\ar[d]\ar[l]& \widehat{X}\ar[l]\ar[d]\ar@/_1.5pc/[lll]\\
			\widetilde{Y}\ar[d]& & Y'\ar[d]& \widehat{Y}\ar[d]\ar@{-->}[l]\ar@{-->}@/_1.5pc/[lll]\\
			\widetilde{B}\ar[r]& B& B\ar@{=}[l]& \widetilde{B}\ar[l]\ar@{=}@/^1.3pc/[lll]
		}
	\end{equation}
	\newline
	where $\widetilde{B}\to B$ is a covering, $\widehat{X}\to \widetilde{X}$ is a (generically) degree 1 map, $\widehat{Y}\dashrightarrow Y'$ is a dominant meromorphic map and $\widehat{Y}\dashrightarrow \widetilde{Y}$ is a bimeromorphic map.
\end{thml}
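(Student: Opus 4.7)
The plan is to build the diagram by combining the two outputs of Theorem~\ref{thmA}: the factoring variety $\widetilde{Y}$ itself, and the algebraic integrability of $\sF_{\mL}$. The left column is simply what Theorem~\ref{thmA} produces applied to $\mL$: the covering $\widetilde{B}\to B$, the base change $\tilde f\colon\widetilde{X}\to\widetilde{B}$, the $m$-dimensional normal complex space $\widetilde{Y}$, and the maps $\tilde g,\tilde h$ factoring $\tilde f$. The middle column comes from algebraic integrability: since $\sF_{\mL}\subseteq T_{X/B}$ is algebraically integrable, Lemma~\ref{lasic} ([L, Lemma 4.12]) provides a birational modification $X'\to X$ and a normal variety $Y'$ with $X'\to Y'\to B$ whose general $Y'$-fibres are the leaves of $\sF_{\mL}$. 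The rank of $\sF_{\mL}$ equals $\dim X-m$ (the Castelnuovo-type hypotheses force the forms of $\mL$ to span an $(m-1)$-dimensional subspace of the relative cotangent at a general fibre, whose annihilator in $T_{X/B}$ has corank $m-1$, giving leaf dimension $\dim X-m$), so $Y'$ has dimension $m$.

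To produce the right column I would compare the two leaf-space quotients. The foliation $\sF_{\mL}$ pulls back under $\widetilde{X}\to X$ to a foliation on $\widetilde{X}$, and by inspecting the construction of $\widetilde{Y}$ in Theorem~\ref{thmA} this pullback coincides with the foliation whose leaves are the fibres of $\tilde h$. Consequently an irreducible component of the fibre product $Y'\times_B\widetilde{B}$, suitably normalized, parameterizes the same leaves as $\widetilde{Y}$ and is therefore bimeromorphic to $\widetilde{Y}$. I would then define $\widehat{Y}$ as a common bimeromorphic model, for instance the normalization of the graph closure inside $\widetilde{Y}\times_{\widetilde{B}}(Y'\times_B\widetilde{B})$; by construction it carries a bimeromorphic map $\widehat{Y}\dashrightarrow\widetilde{Y}$ and a dominant meromorphic map $\widehat{Y}\dashrightarrow Y'$ compatible with the covering $\widetilde{B}\to B$. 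The variety $\widehat{X}$ is then built as the corresponding modification of $\widetilde{X}$, e.g.\ a resolution of the graph of the composition $\widetilde{X}\dashrightarrow\widehat{Y}$, so that $\widehat{X}\to\widetilde{X}$ is generically of degree~$1$ and all squares commute.

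The main obstacle is verifying that the two candidate leaf-space quotients --- one built globally from the local system $\mL$ via Theorem~\ref{thmA}, the other built fibrewise from the algebraically integrable foliation $\sF_{\mL}$ via Lemma~\ref{lasic} --- truly come from the same foliation on $\widetilde{X}$. This forces one to re-enter the proof of Theorem~\ref{thmA} and identify, at a generic point, the kernel of $d\tilde h$ with (the pullback of) $\sF_{\mL}$. Morally this is the content of the correspondence~\eqref{corrispondeprima} applied in the Castelnuovo-type regime, but it has to be made precise in the presence of the base change $\widetilde{B}\to B$ and of the birational modifications. Once this identification is in place, the generic degree~$1$ statement for $\widehat{X}\to\widetilde{X}$ and the bimeromorphic/dominant comparison maps $\widehat{Y}\dashrightarrow\widetilde{Y}$ and $\widehat{Y}\dashrightarrow Y'$ follow from the uniqueness up to bimeromorphism of the leaf-space quotient of an algebraically integrable foliation.
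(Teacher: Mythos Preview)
Your construction of the left column ($\widetilde{B}$, $\widetilde{X}$, $\widetilde{Y}$ via Theorem~\ref{thmA}) and of the middle column ($X'$, $Y'$ via Lemma~\ref{lasic} applied to the algebraically integrable $\sF_{\mL}$) is exactly what the paper does. The divergence is in the right column.

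You propose to manufacture $\widehat{Y}$ abstractly as a common bimeromorphic model---a normalized graph closure inside $\widetilde{Y}\times_{\widetilde{B}}(Y'\times_B\widetilde{B})$---and then take $\widehat{X}$ to be a resolution of $\widetilde{X}\dashrightarrow\widehat{Y}$. This route can be pushed through, but as written it has two loose ends: it does not produce the arrow $\widehat{X}\to X'$ that the diagram demands (your $\widehat{X}$ is a modification of $\widetilde{X}$, not of $X'$, and the birational map goes the wrong way), and the ``main obstacle'' you correctly isolate---identifying the kernel of $d\tilde h$ with the pullback of $\sF_{\mL}$---must be faced directly.

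The paper sidesteps both by a much more concrete choice: it \emph{defines} $\widehat{X}:=X'\times_B\widetilde{B}$. The projection $\widehat{X}\to X'$ is then for free, and the induced $\widehat{X}\to X\times_B\widetilde{B}=\widetilde{X}$ is generically of degree~$1$ because $X'\to X$ is birational. One now applies Theorem~\ref{castel} a second time, to the fibration $X'\to B$ with the same local system $\mL$; this outputs $\widehat{Y}$ together with the factorization $\widehat{X}\to\widehat{Y}\to\widetilde{B}$. The dominant meromorphic map $\widehat{Y}\dashrightarrow Y'$ is then clear, and the bimeromorphism $\widehat{Y}\dashrightarrow\widetilde{Y}$ follows because both spaces arise from the \emph{same} Castelnuovo--de Franchis construction applied to birationally equivalent inputs $(\widehat{X},\mL)$ and $(\widetilde{X},\mL)$. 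In effect the paper replaces your foliation-comparison argument by the birational invariance of the output of Theorem~\ref{castel}.
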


\noindent See Theorem \ref{dxsx}. This means that a local system of Castelnuovo-type recovers the intermediate variety either up to (possibly non finite) \'etale  cover or up to birational morphism. In Theorem \ref{thmA} and \ref{thmB} the algebricity of $\widetilde{X}, \widehat{X},\widetilde{Y},\widehat{Y}$ highly depends on specific assumptions. In any case Theorem \ref{thmB} should be read in the light of the theory of towers of varieties; \cite{P}. Finally the notion of local system of Castelnuovo-type gives us a nice theorem to study some of the fixed points of the above correspondence, that is to find conditions on a local system
$\mL\leq \mD_X$ or on an algebraically integrable foliation $\sF\subseteq T_{X/B}$ such that $\alpha(\beta(\mL))=\mL$ or $\beta(\alpha(\sF))=\sF$; see: Theorem \ref{fissi}.

\subsection{Maximal Rationally Connected fibrations}
Our point of view can give an answer to some problem concerning rational connected varieties.
First we recall that once we fix an ample polarization $\alpha$ on $X$ it defines the $\alpha$-slope $\mu(\sQ)$ of any torsion-free coherent sheaves on $X$ as: $\mu(\sQ):=\frac{\deg\sQ}{\rank \sQ}$. Let 
$$
0= \sF_0 \subsetneq \sF_1 \subsetneq\dots\subsetneq \sF_r =T_{X/B}
$$ 
be the $\alpha$-Harder-Narasimhan filtration of the relative tangent sheaf $T_{X/B}$ and we call $\sQ_i$ the quotients $\sF_i/\sF_{i-1}$.  We set
$$
i_{\max}:=\max\{0<i<k\mid \mu(\sQ_i)>0\}\cup \{0\}.
$$
If $i_{\max} >0$, it is well known that for every index 
$0< i\leq i_{\max} $, $\sF_i$ is an algebraically integrable foliation with rationally connected general leaf, see \cite[Proposition 3.8, Corollary 3.10]{K}. In particular for such $i$ we obtain rational maps $X\dashrightarrow W_i$ with rationally connected general fibers.
We recall in Definition \ref{definitionMRC} the definition of an MRC-fibration, see also \cite[Sect. IV.5]{Kol}, \cite[Sect. 5]{D} and \cite[Thm. IV.5.5]{Kol} for the functoriality of the MRC-fibration. See also \cite{SCT}. In \cite{K}, Question 3.12 poses the problem of the characterization of  MRC-fibrations. In other words it is an open problem to understand when the rational map $X\dashrightarrow W_{i_{\max}}$ is an MRC-fibration over the base $B$. The study of $\sF_{i_{\max}}$ together with the local system $\mD_{X}^1$ allows us to give a solution of this problem, again in the case when this local system is of Castelnuovo-type, see Theorem \ref{mrc}.

\begin{thml}
	\label{thmC}
	Let $f\colon X\to B$ be a semistable fibration and  
	$$
	0= \sF_0 \subsetneq \sF_1 \subsetneq\dots\subsetneq \sF_r =T_{X/B}
	$$ the $\alpha$-polarized Harder-Narasimhan filtration of $T_{X/B}$. Assume that $\mD_{X}^1$ is of Castelnuovo-type of order $m$.
	If the foliation $\sF_{i_{\max}}$ has generic rank $n-m$, then $X\dashrightarrow W_{i_{\max}}$ is the relative MRC fibration of $f$. 
\end{thml}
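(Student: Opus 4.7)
The plan is to apply Theorem \ref{thmA} with $\mL=\mD_X^1$ and identify the resulting foliation $\sF_{\mD_X^1}=\beta(\mD_X^1)$ with the Harder--Narasimhan piece $\sF_{i_{\max}}$; the MRC property of $X\dashrightarrow W_{i_{\max}}$ will then follow from a direct verification of the universal property.

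First I would apply Theorem \ref{thmA} to the Castelnuovo-type local system $\mD_X^1$ of order $m$. After a base change $\widetilde{B}\to B$ this yields a factorization $\widetilde{f}=\widetilde{g}\circ\widetilde{h}$ through an $m$-dimensional normal complex space $\widetilde{Y}$, and simultaneously asserts that $\sF_{\mD_X^1}\subseteq T_{X/B}$ is algebraically integrable. Its general leaves pull back to general fibers of $\widetilde{h}$ and thus have dimension $n-m$, so $\sF_{\mD_X^1}$ has generic rank exactly $n-m$, which matches the hypothesis on $\sF_{i_{\max}}$.

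The key step is to prove $\sF_{i_{\max}}=\sF_{\mD_X^1}$. By \cite[Proposition 3.8, Corollary 3.10]{K}, $\sF_{i_{\max}}$ is algebraically integrable with rationally connected general leaves $L$, contained in the fibers of $f$ since $\sF_{i_{\max}}\subseteq T_{X/B}$. A rationally connected smooth variety carries no non-zero holomorphic $1$-form, so the restriction to $L$ of any local section of $\mD_X^1$ vanishes. By the construction of $\beta$ recalled around \eqref{corrispondeprima}, this says exactly that $\sF_{i_{\max}}\subseteq \sF_{\mD_X^1}$; as both foliations have generic rank $n-m$, they must coincide. Consequently the rational quotient $X\dashrightarrow W_{i_{\max}}$ is, up to birational equivalence over $B$, the map $X\dashrightarrow \widetilde{Y}$ produced by Theorem \ref{thmA}. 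To conclude the MRC property it then suffices to check the universal property of Definition \ref{definitionMRC}: given any rational fibration $X\dashrightarrow W'$ over $B$ with rationally connected general fibers, the associated algebraically integrable foliation $\sF'\subseteq T_{X/B}$ satisfies $\sF'\subseteq \sF_{\mD_X^1}=\sF_{i_{\max}}$ by the same vanishing argument, so $X\dashrightarrow W'$ factors through $X\dashrightarrow W_{i_{\max}}$.

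The main obstacle I anticipate is bookkeeping between the level of $X$, where the Harder--Narasimhan filtration, the foliation $\sF_{i_{\max}}$, and the map to $W_{i_{\max}}$ naturally live, and the level of the base change $\widetilde{X}\to\widetilde{B}$, where Theorem \ref{thmA} produces the factorizing variety $\widetilde{Y}$. The identification $\sF_{i_{\max}}=\sF_{\mD_X^1}$ takes place on $X$ directly and is preserved under pullback, but to descend the MRC universal property from $\widetilde{B}$ back to $B$ one must invoke the functoriality of the MRC-fibration under finite covers recalled via \cite[Thm. IV.5.5]{Kol}; carefully matching these two levels is the delicate part of the argument.
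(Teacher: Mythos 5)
Your argument is correct in substance, but it runs on the foliation side of the correspondence, whereas the paper's proof of Theorem \ref{mrc} runs on the local-system side, so the two are worth comparing. The paper starts from the existence of the relative MRC fibration $X\dashrightarrow W$, applies Proposition \ref{uguaglianzalocalsys} (rationally connected, hence regular, general fibers force $\mL_{\sF}=\mD_X$) to both $\sF_W$ and $\sF_{i_{\max}}$, and then concludes either by a dimension count through Theorem \ref{dxsx} or by the fixed-point Theorem \ref{fissi}, via $\sF_W\subseteq\beta(\alpha(\sF_W))=\beta(\mD_X)=\beta(\alpha(\sF_{i_{\max}}))=\sF_{i_{\max}}$. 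You instead prove the inclusion $\sF_{i_{\max}}\subseteq\beta(\mD_X^1)$ directly, by contracting local liftings of sections of $\mD_X^1$ against vector fields tangent to the rationally connected leaves and using that such leaves (after resolution) carry no holomorphic $1$-forms, and then squeeze with the rank; this replaces both Proposition \ref{uguaglianzalocalsys} and Theorem \ref{fissi} by an elementary contraction argument (the vanishing of $1$-forms on rationally connected varieties is of course also the engine behind Proposition \ref{uguaglianzalocalsys}, so the geometric input is the same). What your route buys is independence from the fixed-point machinery; what it needs in exchange is the equality $\rank\beta(\mD_X^1)=n-m$, which is not in the statement of Theorem \ref{thmA} but in the proof of Corollary \ref{castel1} (alternatively, injectivity of $\bigwedge^{m-1}$ on decomposable elements bounds the rank above by $n-m$, and your inclusion of $\sF_{i_{\max}}$ bounds it below).

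Two small repairs. First, from $\sF'\subseteq\sF_{i_{\max}}$ the general fibers of $X\dashrightarrow W'$ lie inside those of $X\dashrightarrow W_{i_{\max}}$, so it is $X\dashrightarrow W_{i_{\max}}$ that factors through $W'$ (one gets a map $W'\dashrightarrow W_{i_{\max}}$), not the other way around as written; moreover Definition \ref{definitionMRC} is not stated as a universal property, so the clean way to finish, as in the paper, is to take $W'=W$ the relative MRC (which exists by \cite[Theorem 4.20]{L}), obtain $\sF_W\subseteq\sF_{i_{\max}}$ from your vanishing argument and $\sF_{i_{\max}}\subseteq\sF_W$ from the maximality of $W$, and conclude that $W_{i_{\max}}\dashrightarrow W$ is birational over $B$, so that $X\dashrightarrow W_{i_{\max}}$ inherits the MRC property. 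Second, the difficulty you anticipate about descending from $\widetilde B$ to $B$ does not arise: in your argument the base change is only used to compute the rank of $\beta(\mD_X^1)$, while the identification of foliations and the comparison with the relative MRC take place entirely on $X$ over $B$, so no functoriality of the MRC fibration under coverings is needed.
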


\subsection{Generalisation of the Castelnuovo-de Franchis theorem}

The Castelnuovo-de Franchis theorem plays an important role in algebraic geometry. This theorem, together its  generalisations, essentially concerns 1-forms; in Section \ref{sez6} we show an extension to p-forms.

Let $\omega_1,\dots, \omega_l \in H^0 (X,\Omega^p_X)$, $l\geq p+1$, be linearly independent $p$-forms such that $\omega_i\wedge\omega_j=0$ (as an element of $\bigwedge^2\Omega^p_X$ and not of $\Omega_X^{2p}$) for any choice of $i,j=1,\dots, l$. These forms generate a subsheaf of $\Omega^p_X$ generically of rank $1$. We denote it by $\sL$. Note that the quotients $\omega_i/\omega_j$ define a non-trivial global meromorphic function on $X$ for every $i\neq j$, $i,j=1,\dots, l$. By taking the differential $d (\omega_i/\omega_j)$ we then get global meromorphic $1$-forms on $X$. We ask that there exist $p$ of these meromorphic differential forms $d (\omega_i/\omega_j)$ that do not wedge to zero; if this is the case we call  the subset $\{\omega_1,\dots, \omega_l \}\subset H^0 (X,\Omega^p_X)$ $p$-strict, see Definition \ref{definizionestrict}. For this new setting, this condition is  analogous to the strictness condition considered in \cite[Definition 2.1 and 2.2]{Ca2}, see also \cite[Definition 4.4]{RZ4}.
\begin{thml}
	\label{thmD}
	Let $X$ be an $n$-dimensional smooth variety and let $\{\omega_1,\dots, \omega_l \}\subset H^0 (X,\Omega^p_X)$ be a $p$-strict set. Then there exists a rational map $f\colon X\dashrightarrow Y$ over a $p$-dimensional smooth variety $Y$ such that the $\omega_i$ are pullback of some meromorphic $p$-forms $\eta_i$ on $Y$, $\omega_i=f^*\eta_i$, where $i=1,\dots , l$.
\end{thml}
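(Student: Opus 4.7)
The plan is to exhibit each $\omega_i$ as a meromorphic multiple of a single fixed decomposable $p$-form built from the strictness data, and then to realize this factorization as a pullback from a $p$-dimensional target. Because $\omega_i\wedge\omega_j=0$ in $\bigwedge^2\Omega^p_X$, the $\omega_i$ are pointwise proportional and span the saturated rank-one subsheaf $\sL\subset\Omega^p_X$ introduced before the statement; fix a meromorphic generator $\omega$ of $\sL$ and write $\omega_i=g_i\omega$ with $g_i\in\mC(X)$, so that $\phi_{ij}:=\omega_i/\omega_j=g_i/g_j$.

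Using that $X$ is (implicitly) projective, the holomorphic $\omega_i$ are $d$-closed. Differentiating $\omega_i=g_i\omega$ and combining the equations for two indices yields
\[
(g_j\,dg_i-g_i\,dg_j)\wedge\omega \;=\; g_j^{\,2}\,d\phi_{ij}\wedge\omega \;=\; 0,
\]
so $d\phi_{ij}\wedge\omega=0$ as a meromorphic $(p+1)$-form for every pair $i,j$. Now let $\phi_1,\dots,\phi_p$ be the $p$ ratios supplied by strictness, so $d\phi_1\wedge\cdots\wedge d\phi_p\neq 0$. The elementary fact that a $p$-form annihilated under wedge by each of $p$ one-forms whose common wedge is non-zero must be a scalar multiple of that wedge (extend $d\phi_1,\dots,d\phi_p$ to a basis of meromorphic $1$-forms and inspect coefficients) gives $\omega=c\,d\phi_1\wedge\cdots\wedge d\phi_p$ for some $c\in\mC(X)$. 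Setting $h_i:=g_i c$ yields the explicit formula $\omega_i=h_i\,d\phi_1\wedge\cdots\wedge d\phi_p$.

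Closedness of $\omega_i$ together with $d(d\phi_k)=0$ then forces $dh_i\wedge d\phi_1\wedge\cdots\wedge d\phi_p=0$, so each $dh_i$ lies in the $\mC(X)$-span of $d\phi_1,\dots,d\phi_p$; since on a smooth $\mC$-variety meromorphic functions with $\mC(X)$-linearly dependent differentials are algebraically dependent, each $h_i$ is algebraic over $\mC(\phi_1,\dots,\phi_p)$. Consequently the subfield $K:=\mC(\phi_1,\dots,\phi_p,h_1,\dots,h_l)\subset\mC(X)$ has transcendence degree exactly $p$; take a smooth model $Y$ with $\mC(Y)=K$ (so $\dim Y=p$) and let $f\colon X\dashrightarrow Y$ be the induced dominant rational map. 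Writing $\phi_k=f^*\psi_k$ and $h_i=f^*\widetilde h_i$ for $\psi_k,\widetilde h_i\in\mC(Y)$, the meromorphic top-degree forms $\eta_i:=\widetilde h_i\,d\psi_1\wedge\cdots\wedge d\psi_p$ on $Y$ satisfy $f^*\eta_i=\omega_i$, which is the desired conclusion.

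The main obstacle is the decomposability step: extracting from infinitely many vanishing relations $d\phi_{ij}\wedge\omega=0$ together with the single non-vanishing $d\phi_1\wedge\cdots\wedge d\phi_p\neq 0$ the rigid conclusion that $\omega$ itself is meromorphically the wedge of the strict $d\phi_k$. This is precisely the geometric content encoded by $p$-strictness. A smaller but necessary point is the closedness of the $\omega_i$, which enters both the derivation of $d\phi_{ij}\wedge\omega=0$ and the algebraic dependence of the $h_i$'s; this is the standard compact/projective input of the Castelnuovo--de Franchis framework, without which the argument cannot proceed in this form.
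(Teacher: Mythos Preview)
Your proof is correct and follows essentially the same route as the paper: both derive $d\phi_{ij}\wedge\omega_j=0$ from closedness, use the $p$-strictness hypothesis to decompose $\omega_i=h_i\,d\phi_1\wedge\cdots\wedge d\phi_p$, and then argue that the coefficients $h_i$ descend to a $p$-dimensional base. The only cosmetic difference is that the paper first introduces the foliation $\sF$ cut out by the $\omega_i$ and takes $Y$ as a smooth model of the field $\mC(\sF)$ of leaf-constant meromorphic functions, whereas you bypass the foliation language and construct $Y$ directly as a model for the explicitly generated subfield $\mC(\phi_1,\dots,\phi_p,h_1,\dots,h_l)\subset\mC(X)$; both choices produce a $p$-dimensional target with the required pullback property.
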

\noindent See Theorem \ref{cast2}.

If $p=1$ then the rational map $f\colon X\dashrightarrow Y$ turns out to be a morphism, if we allow $Y$ to be normal. If $p\geq 2$ this is not always the case.
\subsection{Applications of the generalised Castelnuovo-de Franchis theorem: Iitaka fibrations}
We can apply Theorem \ref{thmD} to the case of Iitaka fibrations arising from subbundles $\sL$ of $\Omega^p_X$. 

We recall that if $X$ is a smooth projective variety of Kodaira dimension $\Kod X\geq 0$, then by a well-known construction of Iitaka, see cf. \cite{F}, \cite{Laz}, there is a birational morphism $u_\infty \colon X_\infty \to X$ from a smooth projective variety $X_\infty$, and a contraction $\phi_\infty\colon X_\infty\to Y_\infty$
onto a projective variety $Y_\infty$ such that a (very) general fiber $F$ of $\phi_\infty\colon X_\infty\to Y_\infty$ is smooth with Kodaira dimension zero, and $\dim Y_\infty$ is equal to $\Kod X$. The map $\phi_\infty\colon X_\infty\to Y_\infty$ is unique up to birational equivalence and it is referred to as an Iitaka fibration of $X$. In the case of a line bundle $\sL=\sO_X(L)$ such that $\bigoplus H^0(X,nL)$ is a finitely generated $\mathbb C$-algebra, the dimension of ${\rm{Proj}} \bigoplus_{n\in\mathbb N}H^0(X,nL)$ is called Kodaira dimension of $\sL$ and it is denoted by $\Kod \sL$.

\begin{thml}
	\label{thmE} Let $p\in\mathbb N$ and $1\leq p\leq n$. Let $X$ be a smooth variety of dimension $n$.
	If $\sL\hookrightarrow \Omega^p_X$ is an invertible subsheaf which is globally generated by a $p$-strict subset and if $\Kod X=\Kod \sL=p$, then the Stein factorization of $\varphi_{|\sL|}\colon X\to \mP(H^0(X,\sL)^\vee)$ induces the Iitaka fibration.
\end{thml}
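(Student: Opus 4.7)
The plan is to apply Theorem \ref{thmD} to the $p$-strict generating set to produce a geometric fibration $f\colon X\dashrightarrow Y$ with $\dim Y=p$, to recognise $\varphi_{|\mathcal{L}|}$ as a factorisation through $f$ whose Stein factorisation has base birational to $Y$, and finally to identify this fibration with the Iitaka fibration of $X$ by verifying that a general fiber has Kodaira dimension $0$.

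First I would apply Theorem \ref{thmD} to the $p$-strict subset $\{\omega_1,\ldots,\omega_l\}\subset H^0(X,\mathcal{L})$, which produces a rational map $f\colon X\dashrightarrow Y$ onto a smooth $p$-dimensional $Y$ with $\omega_i=f^*\eta_i$ for meromorphic $p$-forms $\eta_i$ on $Y$. Since the $\omega_i$ generate $\mathcal{L}$, the morphism $\varphi_{|\mathcal{L}|}=[\omega_1:\cdots:\omega_l]$ is constant on the fibers of $f$ and so factors as $X\stackrel{f}{\dashrightarrow} Y\dashrightarrow \varphi_{|\mathcal{L}|}(X)$. Being globally generated, $\varphi_{|\mathcal{L}|}$ is already a morphism with $\mathcal{L}=\varphi_{|\mathcal{L}|}^*\mathcal{O}(1)$, and $\dim\varphi_{|\mathcal{L}|}(X)=\Kod\mathcal{L}=p=\dim Y$ (the restriction of $\mathcal{O}(1)$ to the image is ample). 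Hence the intermediate map $Y\dashrightarrow \varphi_{|\mathcal{L}|}(X)$ is generically finite, and the Stein factorisation $X\to Z$ of $\varphi_{|\mathcal{L}|}$ yields a fibration whose base $Z$ is birational to $Y$.

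It remains to show that $X\to Z$ is the Iitaka fibration $\phi_\infty$ of $X$. Since $\dim Z=p=\Kod X$, by the standard birational characterisation of $\phi_\infty$ it suffices to check that a general smooth fiber $F$ has $\Kod F=0$. On such a fiber the restrictions $\omega_i|_F$ die as genuine $p$-forms, so $\mathcal{L}|_F$ lies in the top piece $\bigwedge^p N^*_{F/X}$ of the conormal filtration on $\Omega^p_X|_F$; for a smooth fibration the conormal bundle $N^*_{F/X}\cong (f^*\Omega^1_Y)|_F$ is trivial, so $\mathcal{L}|_F\cong\mathcal{O}_F$ and adjunction gives $K_F\cong K_X|_F$. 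Easy addition yields $\Kod F\geq\Kod X-\dim Y=0$; for the opposite inequality I would identify $\mathcal{L}$ with $f^*M$ for a line bundle $M$ on $Y$ with $M\leq K_Y$ modulo an effective boundary, deduce from $\Kod M=p=\dim Y$ that $Y$ is of general type, and invoke Viehweg's subadditivity for fibrations over a base of general type to obtain $\Kod X\geq\Kod F+\dim Y$, whence $\Kod F=0$.

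The principal obstacle is this last identification. Because Theorem \ref{thmD} only provides meromorphic $\eta_i$, promoting $\omega_i=f^*\eta_i$ to an honest isomorphism $\mathcal{L}\cong f^*M$ and to an honest inclusion $M\hookrightarrow K_Y$ (modulo effective boundary) on a smooth resolution of $f$ requires careful bookkeeping of the polar divisors of the $\eta_i$ and of the $f$-exceptional components, since the holomorphy of $\omega_i$ on $X$ forces the poles of $\eta_i$ to be absorbed by the Jacobian of a suitable resolution. Once $Y$ is shown to be of general type, Viehweg's theorem closes the argument and the Stein factorisation of $\varphi_{|\mathcal{L}|}$ realises the Iitaka fibration of $X$.
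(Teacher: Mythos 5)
The strategy you outline (get a $p$-dimensional base from the $p$-strict set, show it is of general type, then identify the resulting fiber space with the Iitaka fibration) is close in spirit to the paper's, but the step you yourself flag as "the principal obstacle" is a genuine gap, and it is exactly where the paper uses the global generation hypothesis. Everything in your endgame hinges on knowing that the base $Y$ (equivalently the Stein base $Z$) is of general type, since Viehweg's subadditivity $\Kod X\geq \Kod F+\dim Y$ is only available over a base of general type. From Theorem \ref{cast2} alone you only know $\omega_i=f^*\eta_i$ with $\eta_i$ \emph{meromorphic}; this gives at best an inclusion of $\sL$ into the pullback of $K_Y$ corrected by poles/boundary divisors, i.e. the base is of general type only in an orbifold sense (this is precisely the Bogomolov--Campana situation for rank one subsheaves of $\Omega^p_X$ with $\Kod=p$), and that is not enough to invoke the classical Viehweg theorem, so your inequality $\Kod F\leq 0$ remains unproved. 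The paper closes this hole with Lemma \ref{lemmasuL}: global generation of $\sL$ by the $p$-strict set forces the foliation cut out by the $\omega_i$ to be locally free with locally free normal sheaf $N$, hence induced by an honest morphism $f\colon X\to Y$ onto a normal $p$-dimensional variety, and then $\sL\cong\det N^{\vee}\cong f^*\omega_Y^{\vee\vee}$, so $H^0(X,\sL)=H^0(Y,\omega_Y)$, the $\omega_i$ are pullbacks of \emph{holomorphic} top forms, and $\Kod Y=\Kod\sL=p$, i.e. $Y$ is genuinely of general type. Without an argument of this kind your proof does not go through.

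Two further remarks. First, your claim that the Stein base $Z$ is birational to $Y$ is not justified as written: generic finiteness of $Y\dashrightarrow\varphi_{|\sL|}(X)$ does not by itself give degree one, and since your $f$ is only rational, closures of distinct $f$-fibers may meet along the indeterminacy locus, so identifying general $f$-fibers with connected components of $\varphi_{|\sL|}$-fibers needs care; with Lemma \ref{lemmasuL} the map $f$ is a morphism with connected fibers through which $\varphi_{|\sL|}$ factors, and this issue disappears. Second, granting $Y$ of general type, your conclusion (easy addition plus Viehweg to get $\Kod F=0$, then the characterization of the Iitaka fibration as a fiber space with $\Kod$-zero fibers over a base of dimension $\Kod X$) is a workable alternative to the paper's route in Theorem \ref{Iitakauno}, which instead shows that the very general Iitaka fiber is contracted to a point of the general type base and concludes by the rigidity lemma and uniqueness of the Iitaka fibration; but as submitted the argument is incomplete at the decisive point.
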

See Theorem \ref{Iitakauno}.

Finally we present another application of Theorem \ref{thmD}. We stress that this application is only conjectural. Indeed it asserts that the same conclusion of Theorem \ref{thmE} can be obtained even in the case where $\sL$ is not globally generated but assuming that there exists $a,b\in \mathbb N$ such that $aK_X-bL$ is nef where $\sL=\sO_X(L)$; see Theorem  \ref{Iitakadue}. We maintain it in this paper because it can really be considered as an evidence for abundance conjecture.

\begin{ackn}
	The first author has been supported by JSPS-Japan Society for the Promotion of Science (Postdoctoral Research
	Fellowship, The University of Tokyo) and the IBS Center for Complex Geometry. The second author has been supported by the grant DIMA Geometry PRIDZUCC and by PRIN 2017 Prot. 2017JTLHJR \lq\lq Geometric, algebraic and analytic methods in arithmetics\rq\rq.
\end{ackn}

\section{Setting: fibrations, foliations, local systems}
\label{sez2}
Let $X$ be a smooth complex $n$-dimensional variety and $B$ a smooth complex curve.
In this paper we consider semistable fibrations $f\colon X\to B$; we denote by $X_b=f^{-1}(b)$ the fiber over a point $b\in B$ and assume that all the fibers $X_b$ are either smooth or reduced and normal crossing divisors. We recall that $\omega_{X/B}:=f^*\omega_B^\vee\otimes \omega_X$ is the relative dualizing sheaf and $\Omega^1_{X/B}$ is the sheaf of relative differentials defined by the short exact sequence 
\begin{equation}
\label{relativo}
0\to f^*\omega_B\to \Omega^1_X\to \Omega^1_{X/B}\to 0.
\end{equation}
We also recall that in this setting $\omega_{X/B}$ is locally free while $\Omega^1_{X/B}$ and its wedges $\Omega^k_{X/B}=\bigwedge^k \Omega^1_{X/B}$ are in principle only torsion free, see: c.f. \cite[Section 2]{RZ4}. The direct images $f_*\Omega^k_{X/B}$ are torsion free on the curve $B$ and hence also locally free.

\subsection{Foliations}
Let's start by recalling the definition of foliation that we will use in this paper.
\begin{defn}
A foliation is a saturated
subsheaf $\sF\subseteq T_X$ which is closed under the Lie bracket, i.e. $[\sF,\sF]\subseteq \sF$. The singularity
locus of a foliation is the subset of $X$ on which $\sF$ is not locally free, and
it has codimension at least 2. A leaf of $\sF$ is the maximal connected, locally
closed submanifold $L$ such that $T_L =\sF|_L$.

We also recall that a foliation $\sF$ is called algebraically integrable if its leaves are algebraic. 
\end{defn}

Of course from the fibration $f\colon X\to B$ we have the foliation induced by the relative tangent sheaf, that is the kernel of the differential map. This kernel is usually denoted by $T_{X/B}$ and it fits in  the following exact sequence, dual of (\ref{relativo}),
\begin{equation}
\label{tanrel}
0\to T_{X/B}\to T_X\to f^*T_B\to N\to 0
\end{equation} where $N=\ext^1(\Omega^1_{X/Y}, \sO_X)$ is a torsion sheaf supported on the critical locus of $f$. In this case we say that the foliation is induced by the fibration.
Of course foliations induced by fibrations are algebraically integrable. 

Algebraically integrable foliations give in some sense a viceversa of this construction by the following result, see for example \cite[Lemma 4.12]{L}
\begin{lem}
\label{lasic}
Let $X$ be a smooth projective variety and let $\sF$ be an algebraically integrable
foliation on $X$. Then there is a unique irreducible closed subvariety $W$ of $\textnormal{Chow}(X)$ whose general point parametrizes the closure of a general leaf of $\sF$. In other words, if $U \subseteq W \times X$ is
the universal cycle with projections $\pi\colon U\to W$ and $e \colon U \to X$, then $e$ is birational and $e(\pi^{-1})(w)\subseteq X$ is the closure of a leaf of $\sF$ for a general point $w\in W$.
\begin{equation}
\xymatrix{
U\ar[d]_{e}\ar[r]^{\pi}&W\\
X&
}
\end{equation}  
Then there exists a foliation $\widehat{\sF}$ on the normalisation $\nu\colon U^\nu\to U$ induced by $\pi\circ\nu$ and
which coincides with $\sF$ on $(e\circ \nu)^{-1}(X^\circ)$, where $X^\circ$ is a big open subset of $X$.
\end{lem}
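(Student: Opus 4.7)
The plan is to realise $W$ directly as the Chow-theoretic parameter space of leaf closures. First I would pick a dense open $X^\circ\subseteq X$ in the complement of the singular locus of $\sF$; for $x\in X^\circ$ algebraic integrability guarantees that the leaf $L_x$ through $x$ is an algebraic subvariety, so its closure $\overline{L_x}\subseteq X$ is an integral subvariety of dimension $r=\rank\sF$ and carries a well-defined Chow point $[\overline{L_x}]\in\textnormal{Chow}(X)$. I would then define $W$ as the Zariski closure in $\textnormal{Chow}(X)$ of the image of the resulting rational map $X\dashrightarrow\textnormal{Chow}(X)$, which is irreducible because $X$ is. Uniqueness of $W$ is immediate: through a general point of $X$ there passes a unique leaf closure, so any other closed irreducible subvariety $W'$ of $\textnormal{Chow}(X)$ with the stated property must share a dense open subset with $W$, hence $W=W'$.

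Next I would analyse the universal cycle $U\subseteq W\times X$ with its projections $\pi\colon U\to W$ and $e\colon U\to X$. By construction the fiber of $\pi$ over a general $w\in W$ is the integral $r$-dimensional leaf closure parametrised by $w$, so $\dim U=\dim W+r$. On the other hand, for a general $x\in X^\circ$ the preimage $e^{-1}(x)$ corresponds to the set of leaf closures through $x$, which is a single point; hence $e$ is dominant and generically injective, so $\dim U=\dim X$ and $e$ is birational.

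To produce $\widehat{\sF}$ I would pass to the normalisation $\nu\colon U^\nu\to U$ and take $\tilde{\pi}:=\pi\circ\nu\colon U^\nu\to W$, a surjective morphism between normal varieties whose general fibers are the leaf closures. Its relative tangent sheaf $T_{U^\nu/W}$, saturated inside $T_{U^\nu}$, gives an algebraically integrable foliation $\widehat{\sF}$ of generic rank $r$. To match $\widehat{\sF}$ with $\sF$ I would shrink $X^\circ$ further so that it also avoids the exceptional locus of $e\circ\nu$ and so that $\sF|_{X^\circ}$ is a subbundle of $T_{X^\circ}$; this removes only codimension $\geq 2$ loci, so $X^\circ$ remains big. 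On $(e\circ\nu)^{-1}(X^\circ)$ the morphism $e\circ\nu$ is an isomorphism and, by construction, sends the fibers of $\tilde{\pi}$ bijectively onto the leaves of $\sF$, whence $\widehat{\sF}=(e\circ\nu)^{*}\sF$ there.

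The main obstacle I anticipate is justifying that the set-theoretic assignment $x\mapsto[\overline{L_x}]$ actually factors through an algebraic morphism into a single irreducible component of $\textnormal{Chow}(X)$. This requires boundedness of the Hilbert polynomials of the general leaf closures, which one gets by stratifying $X^\circ$ so that the leaves form a flat family over each stratum, together with the standard identification of points of $\textnormal{Chow}(X)$ with effective algebraic cycles. Once this set-up is in place the remaining claims reduce to dimension counts, generic injectivity, and the formal properties of saturated foliations, so the rest of the argument is routine.
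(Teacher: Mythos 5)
Your construction of $W$ as the closure of the image of $x\mapsto[\overline{L_x}]$ has a genuine gap exactly at the point you flag and then declare routine. Algebraic integrability only tells you that each individual leaf is algebraic; it gives no a priori boundedness of the degrees of the leaf closures and no Zariski-local algebraic triviality of the leaf decomposition (Frobenius integrability trivializes the foliation only in the analytic topology). Consequently it is not known, at that stage of the argument, that the assignment $x\mapsto[\overline{L_x}]$ is constructible, i.e.\ that it is a morphism on some dense Zariski-open subset with values in finitely many components of $\textnormal{Chow}(X)$. Your proposed remedy --- ``stratify $X^\circ$ so that the leaves form a flat family over each stratum'' --- presupposes precisely that the leaf closures already fit into a bounded algebraic family, which is equivalent to the existence of $W$ and of the rational map $X\dashrightarrow W\subseteq\textnormal{Chow}(X)$; so the argument is circular at its crucial step. (Boundedness of the Hilbert polynomials of general leaf closures is a \emph{consequence} of the lemma, not an available input.)

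For comparison, the paper does not prove this statement but recalls it from \cite[Lemma 4.12]{L}, and the standard proof circumvents your obstacle as follows: $\textnormal{Chow}(X)$ has only countably many irreducible components; on each component of the space of $r$-dimensional cycles ($r=\rank\sF$) the locus parametrizing irreducible, reduced cycles tangent to $\sF$ is closed, and an $r$-dimensional tangent subvariety through a general point of $X$ is automatically a leaf closure. Since $X$ over the uncountable field $\mC$ is not a countable union of proper closed subsets, the universal cycle over one of these tangency loci dominates $X$; that component (or an irreducible closed subvariety of it) is $W$. Uniqueness of $W$, the birationality of $e$, and the identification of $\widehat{\sF}$ with $\sF$ over a big open $X^\circ$ then follow, as in the last parts of your sketch, from the uniqueness of the leaf through a general point and Zariski's main theorem applied to the proper birational morphism $e\circ\nu\colon U^\nu\to X$ onto the smooth $X$. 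If you replace your ``main obstacle'' paragraph by this tangency-plus-countability argument, the rest of your proposal goes through.
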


We also recall the definition of algebraic and transcendental part of a foliation.
Let $\sF$ be a foliation on $X$. There exist a normal variety $Y$, unique up to birational equivalence,
a dominant rational map with connected fibers $\varphi\colon X\dasharrow Y$, and a foliation $\sG$ on $Y$ such that the following
conditions hold.
\begin{enumerate}
\item $\sG$ is purely transcendental, i.e., there is no positive-dimensional algebraic subvariety through a general point of $Y$ that is tangent to $\sG$.
\item $\sF$ is the pull-back of $\sG$ via $\varphi$. This means the following. Let $X^\circ\subset X$ and $Y^\circ\subset Y$ be smooth open subsets such that $\varphi$ restricts to a smooth morphism $\varphi^\circ$. Then $\sF|_{X^\circ}=(d\varphi^\circ)^{-1}\sG|_{Y^\circ}$.
\end{enumerate}
\begin{defn}
\label{algfol}
The foliation $\sF^a$ induced by $\varphi$ is called the algebraic part of $\sF$ while $\sG$ is its transcendental part.
\end{defn}
See for example \cite{AD}.

\subsection{Local systems}
The local systems on $B$ associated to $f$ are defined as follows. Consider again Sequence (\ref{relativo}) and its wedges
\begin{equation}
0\to f^*\omega_B\otimes \Omega^{k-1}_{X/B}\to \Omega^k_X\to \Omega^{k}_{X/B}\to 0
\end{equation}  for $k=1,\dots,n-1$.
By pushforward we can write
\begin{equation}
\label{seqhodge}
0\to \omega_B\otimes f_*\Omega^{k-1}_{X/B}\to f_*\Omega^k_X\to f_*\Omega^{k}_{X/B}\to R^1f_*\Omega^{k-1}_{X/B}\otimes \omega_B\to \dots.
\end{equation} and we take the corresponding sub-sequence of de Rham closed holomorphic forms as follows 
\begin{equation}
\label{diag}
\xymatrix{
0\ar[r] &\omega_B\otimes f_*\Omega_{X/B}^{k-1}\ar[r]\ar@{=}[d]& f_*\Omega_{X,d}^{k}\ar[r]\ar@{^{(}->}[d]& f_*\Omega^{k}_{X/B,d_{X/B}}\ar[r]\ar@{^{(}->}[d]& \dots\\
0\ar[r] &\omega_B\otimes f_*\Omega_{X/B}^{k-1}\ar[r]& f_*\Omega_{X}^{k}\ar[r]& f_*\Omega^{k}_{X/B}\ar[r]& \dots\\
}
\end{equation}
This gives the following definition:
\begin{defn}
\label{locsys}
We call $\mD^k_X$, for $k=1,\dots,n-1$, the image of the map $f_*\Omega_{X,d}^{k}\to f_*\Omega^{k}_{X/B,d_{X/B}}$.
\end{defn}
The $\mD^k_X$ are indeed local systems. In \cite{RZ4} we have proved this result for $\mD^1_X$ and $\mD^{n-1}_X$. The proof for $2\leq k\leq n-2$ is similar, hence it will be omitted here
\begin{prop}
 $\mD^k_X$ is a local system on $B$ for $k=1,\dots,n-1$.
\end{prop}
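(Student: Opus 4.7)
The plan is to follow the strategy used in \cite{RZ4} for the extremal cases $k=1$ and $k=n-1$, identifying $\mD^k_X$ with a sub-local-system of the Gauss--Manin local system $R^kf_*\mathbb{C}_X$ via the fibrewise de Rham class map. For a sufficiently small simply-connected open $U\subseteq B$, I would construct a $\mathbb{C}$-linear morphism
\begin{equation*}
\Psi_U\colon \mD^k_X(U)\longrightarrow H^0\bigl(U,R^kf_*\mathbb{C}_X\bigr)
\end{equation*}
sending $\omega\in\mD^k_X(U)$, represented as the image of a $d$-closed lift $\tilde\omega\in f_*\Omega^k_{X,d}(U)$, to the section whose value at $b\in U$ is the de Rham class $[\tilde\omega|_{X_b}]\in H^k(X_b,\mathbb{C})$. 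Independence of the lift is immediate, since any two lifts differ by a section of $f^*\omega_B\otimes\Omega^{k-1}_X$, whose restriction to each fibre vanishes; and the pointwise classes fit into an actual section of $R^kf_*\mathbb{C}_X$ on $U$ via the Leray spectral sequence on the contractible $U$, as $\tilde\omega$ itself provides a class in $H^k(f^{-1}(U),\mathbb{C})$.

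The second step is to verify that the collection $\{\Psi_U\}$ glues into a sheaf morphism $\Psi\colon \mD^k_X\to R^kf_*\mathbb{C}_X$ and that its image is a subsheaf of locally constant rank. Using the pushforward sequence (\ref{seqhodge}) and a diagram chase in (\ref{diag}), one identifies $\Psi$ with the restriction of the natural map
\begin{equation*}
f_*\Omega^k_{X/B,d_{X/B}}\longrightarrow R^kf_*\Omega^\bullet_{X/B}\cong R^kf_*\mathbb{C}_X\otimes_{\mathbb{C}}\sO_B,
\end{equation*}
composed with the projection to the flat sections of the Gauss--Manin connection; since the image is generated by classes of globally defined $d$-closed forms on $f^{-1}(U)$, it is manifestly monodromy-invariant and is therefore a sub-local-system. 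A fibrewise rank check, locally constant because the flat extensions of these classes along paths in $U$ remain in the image, completes the verification that $\mD^k_X$ itself acquires the local-system structure induced by this identification.

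The main obstacle is the treatment of the semistable singular fibres of $f$. At points of the discriminant $\Sigma\subseteq B$, the hypercohomology $R^kf_*\Omega^\bullet_{X/B}$ no longer coincides with $R^kf_*\mathbb{C}_X\otimes\sO_B$, and one must work with the relative logarithmic de Rham complex $\Omega^\bullet_{X/B}(\log f^{-1}(\Sigma))$ and invoke Deligne's canonical extension of the Gauss--Manin local system in order to interpret $\Psi$ on all of $B$. The semistability hypothesis on $f$ is precisely what guarantees that this canonical extension is well-behaved and that the fibrewise restriction maps defining $\Psi$ extend controlledly across $\Sigma$; apart from this subtlety, the argument for the intermediate values $2\leq k\leq n-2$ is structurally identical to the one already carried out in \cite{RZ4} for $k=1$ and $k=n-1$, which is why the authors have chosen to omit the details here.
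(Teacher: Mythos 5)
The paper itself gives no argument here: it simply refers to \cite[Lemma 3.4 and 3.6]{RZ4} and remarks that the proof for $2\leq k\leq n-2$ is structurally the same as for the extremal cases $k=1,\,n-1$. Your reconstruction — embedding $\mD^k_X$ into the Gauss--Manin local system $R^kf_*\mC_X$ by fibrewise de Rham classes and arguing local constancy — is indeed the right strategy and the one that proof follows. But there is a real gap at the one place where the argument actually has to do work: you never verify that $\Psi$ is \emph{injective}, yet you implicitly use it when you speak of the ``local-system structure induced by this identification.'' Showing that the \emph{image} of $\Psi$ is a sub-local-system of $R^kf_*\mC_X$ does not, by itself, show that $\mD^k_X$ is a local system; for that you must rule out a kernel, i.e.\ rule out sections of $\mD^k_X$ whose restriction to every fibre is $d$-exact but not zero. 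The necessary input is Hodge-theoretic: for a smooth compact K\"ahler fibre $X_b$, the restriction of a $d$-closed lift is a holomorphic $k$-form on $X_b$, and $H^0(X_b,\Omega^k_{X_b})\hookrightarrow H^k(X_b,\mC)$, so a vanishing de Rham class forces the form to vanish on the fibre; since $f_*\Omega^k_{X/B}$ is locally free on the curve $B$ and the smooth locus of $f$ is dense, this kills the section. Without this step the argument is incomplete.

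Two smaller remarks. First, the well-definedness step should be stated with the correct kernel: two $d$-closed lifts of the same relative form differ by a section of $f^*\omega_B\otimes\Omega^{k-1}_{X/B}$ (the kernel of the $k$-th wedge of (\ref{relativo})), not of $f^*\omega_B\otimes\Omega^{k-1}_X$; your conclusion still holds since $f^*\omega_B$ restricts to zero on every fibre, but the reference to (\ref{seqhodge}) should be to the $\omega_B\otimes f_*\Omega^{k-1}_{X/B}$ term. Second, you rightly flag the singular fibres over the discriminant as the delicate point and gesture at the logarithmic relative de Rham complex and Deligne's canonical extension; this is indeed the role semistability plays, but as written it is an acknowledgement of a difficulty rather than a resolution of it, and the monodromy-invariance and rank-constancy assertions are only established over the smooth locus. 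In [RZ4] the proof of Lemmas 3.4 and 3.6 supplies precisely these details; in the paper under review they are deliberately omitted, so your sketch is a fair reconstruction of the intended route but falls short of a self-contained proof at exactly those two points.
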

\begin{proof}
See \cite[Lemma 3.4 and 3.6]{RZ4}.
\end{proof}

Recall that by the famous Fujita's decomposition theorem, see \cite{Fu}, \cite{Fu2}, it holds that:
$$
f_*\omega_{X/B}=\sU\oplus\sA
$$ 
where $\sU$ is a unitary flat vector bundle and $\sA$ is an ample one. By the correspondence between unitary flat vector bundles and local systems, cf. \cite{De}, Fujita's decomposition gives naturally a local system $\mathbb U$ on $B$. There is a vast literature on this topic; see cf. \cite{BZ1}, \cite{BZ2}, \cite{CD1}, \cite{CD2}, \cite{CD3}, \cite{CK}. 

In \cite{RZ4} we have shed some light on the higher dimensional geometry associated to $\mathbb U$ and $\mD_{X}^1$ and their respective monodromies. In particular we have shown that $\mD_{X}^{n-1}=\mathbb U$ is the local system of relative top forms on the fibers.

In this paper we are mostly concerned with $\mD^1_X$. We denote $\mD_X:=\mD^1_X$ for simplicity, and we point out that, by Definition \ref{locsys}, it fits into the following short exact sequence 
\begin{equation}
\label{dx}
0\to \omega_B\to f_*\Omega^1_{X,d}\to \mD_X\to 0.
\end{equation}
\section{Natural correspondence between foliations and local systems}
\label{sez3}
The above discussion shows that in the case of a fibration $f\colon X\to B$, we can define an associated foliation and a local system. In this section we define a precise correspondence between these objects.

\subsection{Local systems defined by foliations}
Take $\sF\subseteq T_{X/B}\subset T_X$ a foliation on $X$ and consider the exact sequence of its inclusion in the tangent sheaf
\begin{equation}
0\to \sF\to T_X\to \sK\to 0
\end{equation} where $\sK$ is torsion free because by definition $\sF$ is saturated. Actually since $X$ is smooth and hence $T_X$ is locally free, $\sF$ is reflexive.

By taking the dual we obtain an exact sequence 
\begin{equation}
	\label{uno}
0\to \sK^\vee\to \Omega_X^1\to \sF^\vee\to M\to 0 
\end{equation} where $M$ is supported on the singular locus of the singular fibers. We call $\sQ$ the cokernel 
\begin{equation}
0\to \sK^\vee\to \Omega_X^1\to \sQ\to 0 
\end{equation}
and pushing forward via $f_*$ we have
\begin{equation}
0\to f_*\sK^\vee\to f_*\Omega_X^1\to f_*\sQ\to \dots 
\end{equation} Now it is straightforward to see from the condition $\sF\subseteq T_{X/B}$ that the inclusion of vector bundles $\omega_B\subset f_*\Omega_X^1$ factors through $f_*\sK^\vee$
\begin{equation}
\xymatrix{
&\omega_B\ar@{^{(}->}[d]\ar@{^{(}->}[rd]&&&\\
0\ar[r]&f_*\sK^\vee\ar[r]&f_*\Omega_X^1\ar[r]&f_*\sQ^\vee\ar[r]&\dots
}
\end{equation} Restricting this diagram to de Rham closed forms $f_*\Omega_{X,d}^1$ we obtain the commutative triangle
\begin{equation}
\xymatrix{
\omega_B\ar@{^{(}->}[d]\ar@{^{(}->}[rd]&\\
f_*\sK_d^\vee\ar[r]&f_*\Omega_{X,d}^1
}
\end{equation} where $f_*\sK_d^\vee$ indicates the intersection of $f_*\sK^\vee$ with the sheaf of de Rham closed forms. The cokernel of the diagonal arrow is, by Sequence (\ref{dx}), the local system $\mD_X$, hence the cokernel of $\omega_B\hookrightarrow f_*\sK_d^\vee$ is a local subsystem of $\mD_X$ which we denote by $\mL_\sF$, and we call the local system obtained from the foliation $\sF$. By definition the following sequence is exact:
\begin{equation}
	\label{kappa}
	0\to \omega_B\to f_*\sK_d^\vee\to \mL_\sF\to 0.
\end{equation}

Actually $\mD_X/ \mL_\sF$ is also a local system and we have an exact sequence
\begin{equation}
	\label{esattalocalsys}
0\to \mL_\sF\to \mD_X\to \mD_X/ \mL_\sF\to 0.
\end{equation} 

In this paper we will work mainly with the local system $\mL_\sF$, but of course one could equivalently consider the cokernel $\mD_X/ \mL_\sF$.

\begin{expl}
	\label{casibanali}
	We look at the two extreme cases $\sF=0$ and $\sF=T_{X/B}$. 
	
	For $\sF=0$, we have that $\mL_\sF=\mD_{X}$  and Sequence \ref{esattalocalsys} is 
	$$
	0\to \mD_{X}\to \mD_{X}\to 0\to 0.
	$$
	
On the other hand if $\sF=T_{X/B}$ we have $\mL_\sF=0$. In fact by  (\ref{tanrel}) we have the exact sequence 
$$
0\to \sK\to f^*T_B\to N\to 0.
$$  Dualizing we have the inclusion $f^*\omega_B\to \sK^\vee$ which fits into the diagram 
\begin{equation}
\xymatrix{
0\ar[r]&f^*\omega_B\ar@{^{(}->}[d]\ar[r]&\Omega_X^1\ar@{=}[d]\ar[r]&\Omega^1_{X/B}\ar[d]\ar[r]&0\\
0\ar[r]&\sK^\vee\ar[r]&\Omega_X^1\ar[r]&\sQ^\vee\ar[r]&0
}
\end{equation} 
By the commutativity of the diagram the co-kernel of the injection $f^*\omega_B\to \sK^\vee$ is also the kernel of $\Omega^1_{X/B}\to \sQ^\vee$ hence it is zero because it should be a torsion sheaf inside the torsion free sheaf $\Omega^1_{X/B}$ (recall that $f$ is semistable). This means that $f^*\omega_B\cong\sK^\vee$ and we easily have the result. Sequence \ref{esattalocalsys} is 
	$$
0\to 0\to \mD_{X}\to \mD_{X}\to 0.
$$
\end{expl}

\begin{rmk}
Note that so far we have not made any particular assumptions on the foliation $\sF$; we show now that in this framework it is not restrictive to consider algebraically integrable foliations.

In fact take $\sF$ an arbitrary foliation and call $Y$ as in Definition \ref{algfol} where the dominant rational map $\varphi\colon X\dashrightarrow Y$ defines the algebraic and the transcendental part of $\sF$.  In our setting we also have a morphism $Y\to B$ and,
since $\sF^a\subseteq \sF\subseteq T_{X/B}\subset T_X$, we can associate a local system $\mL_{\sF^a}$ to the algebraic part $\sF^a$.
It is not difficult to see that the inclusion $\sF^a\subseteq \sF$ is reversed at the level of local systems as $\mL_{\sF}\leq \mL_{\sF^a}$. This construction shows that, even in the case of a general foliation, we can always consider the local system associated to its algebraic part and reduce to the algebraic case. Therefore in this paper we will be mostly concerned with algebraically integrable foliations.
\end{rmk}

\subsubsection{The extrinsic construction of the local system $\mL_\sF$}
The above construction of $\mL_{\sF}$ is fundamentally intrinsic. We present now a more extrinsic alternative construction, which will be very useful in the following. 

Take $\sF\subseteq T_{X/B}\subset T_X$ an algebraically integrable foliation. The following diagram
\begin{equation}
\label{diagfol}
\xymatrix{
U\ar[d]_{e}\ar@/_2pc/[dd]_{\tilde{f}}\ar[r]^{\pi}&W\ar[ddl]\\
X\ar[d]_{f}&\\
B
}
\end{equation} 
follows by the one of Lemma \ref{lasic} in the relative setting. Possibly by blowing up we assume that $U$ and $W$ are smooth; hence note that $W$ is no longer necessarily in $\textnormal{Chow}(X)$. Call $\tilde{f}$ the composition $f\circ e$ and  $\tilde{\sF}=T_{U/W}$. Note that $\tilde{\sF}\subseteq T_{U/B}$ hence $\tilde{f}$ factors through $W$.
Now consider the sequence of the relative tangent sheaf of $\pi$:
\begin{equation}
0\to \tilde{\sF}\to T_U\to \pi^*T_W\to N\to 0.
\end{equation} We call $\sL$ the image of $T_U\to \pi^*T_W$
\begin{equation}
	\label{due}
0\to \tilde{\sF}\to T_U\to \sL\to 0
\end{equation}
Taking the dual of $T_U\to \sL$ and the direct image via $\tilde{f}_*$ gives  
\begin{equation}
\tilde{f}_*\sL^\vee\to \tilde{f}_*\Omega^1_U
\end{equation} 
Now since $e$ is a birational morphism, we have that $\tilde{f}_*\Omega^1_U\cong {f}_*\Omega^1_X$, hence taking the closed forms as before we obtain 

\begin{equation}
	\label{ll}
\xymatrix{
\omega_B\ar@{^{(}->}[d]\ar@{^{(}->}[rd]&\\
\tilde{f}_*\sL_d^\vee\ar[r]&f_*\Omega_{X,d}^1\cong \tilde{f}_*\Omega^1_{U,d}.
}
\end{equation}
and the cokernel of the vertical map gives a sublocal system of $\mD_X$. By analogy we call $\mD_W$ this local system but we immediately prove that this construction agrees with the one seen before. We state the following easy Lemma for later reference.

\begin{lem}
\label{lemma}
If we have a commutative diagram of sheaves as follows
\begin{equation}
\xymatrix{
	0\ar[r]&\sA\ar[r]&\sB\ar[r]&\sC \ar[r]&0\\
	0\ar[r]&\sA'\ar[r]&\sB\ar[r]\ar@{=}[u]&\sC'\ar@{^{(}->}[u]\ar[r]&0
}
\end{equation} then $\sA\cong \sA'$, hence also $\sC\cong \sC'$.
\end{lem}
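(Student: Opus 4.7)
The plan is a standard diagram chase exploiting the fact that the two short exact sequences share the same middle term $\sB$ and that $\sC' \hookrightarrow \sC$ is a monomorphism. I would prove $\sA \cong \sA'$ first as subsheaves of $\sB$, and then deduce $\sC \cong \sC'$ immediately by passing to quotients.

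First I would construct a map $\sA' \to \sA$. By exactness of the bottom row the composite $\sA' \to \sB \to \sC'$ is zero. Post-composing with the inclusion $\sC' \hookrightarrow \sC$, which by the commutativity of the right-hand square coincides with the map $\sB \to \sC' \hookrightarrow \sC = \sB \to \sC$, shows that $\sA' \to \sB \to \sC$ vanishes. The universal property of the kernel $\sA = \ker(\sB \to \sC)$ therefore produces a unique factorization $\sA' \to \sA$ compatible with the two inclusions into $\sB$.

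Next I would build the inverse $\sA \to \sA'$. By exactness of the top row, $\sA \to \sB \to \sC$ is zero; since this map equals the composition $\sA \to \sB \to \sC' \hookrightarrow \sC$ and $\sC' \hookrightarrow \sC$ is injective, we deduce $\sA \to \sB \to \sC' = 0$. Hence $\sA \to \sB$ factors through $\sA' = \ker(\sB \to \sC')$, giving a canonical map $\sA \to \sA'$. Both compositions $\sA \to \sA' \to \sA$ and $\sA' \to \sA \to \sA'$ are induced by the identity on $\sB$, so the uniqueness clause of the respective kernel factorizations forces them to be the identity. Thus $\sA \cong \sA'$, and in fact they coincide as subsheaves of $\sB$. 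Quotienting by this common subsheaf yields $\sC = \sB/\sA = \sB/\sA' = \sC'$, completing the proof.

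There is no serious obstacle here; the only point to watch is that the injectivity of $\sC' \hookrightarrow \sC$ is essential for the second factorization, whereas the first factorization uses only the commutativity of the right-hand square. Everything else is formal from the universal property of kernels.
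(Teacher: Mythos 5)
Your proof is correct, and it finishes by a slightly different route than the paper's. Both begin the same way: the map $\sA' \to \sA$ exists and is a monomorphism because $\sA' \to \sB$ kills the composite to $\sC$ and hence factors through $\ker(\sB \to \sC) = \sA$. The paper then invokes (implicitly) the snake lemma applied to the pair of short exact sequences with identical middle term $\sB$: the connecting argument identifies $\mathrm{coker}(\sA' \to \sA)$ with $\ker(\sC' \to \sC)$, which vanishes by the assumed injectivity, so $\sA' \to \sA$ is also surjective. You instead avoid the snake lemma altogether by constructing the inverse map $\sA \to \sA'$ directly from the universal property of $\ker(\sB \to \sC')$ — this is precisely where the injectivity of $\sC' \hookrightarrow \sC$ enters, as you correctly flag — and then appeal to uniqueness of kernel factorizations to see that the two composites are the identity. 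The content is the same (the isomorphism of kernels follows from the injectivity on cokernels), but your version is marginally more elementary and self-contained, while the paper's is marginally more compact. Either is acceptable.
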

\begin{proof}
It is immediate that we have an injective morphism $\sA'\hookrightarrow \sA$. The cokernel of this map is by commutativity isomorphic to the kernel of $\sC'\hookrightarrow \sC$, hence zero and we have proved the desired result.
\end{proof}
\begin{prop}
	\label{stesso}
The local system $\mD_W$ coincides with $\mL_\sF$.
\end{prop}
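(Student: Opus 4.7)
The plan is to apply Lemma \ref{lemma} to compare the defining sequence (\ref{kappa}) of $\mL_\sF$ with the analogous sequence $0\to \omega_B\to \tilde{f}_*\sL_d^\vee\to \mD_W\to 0$ coming from the diagram (\ref{ll}). Both are naturally sub-extensions of $0\to \omega_B\to f_*\Omega^1_{X,d}\to \mD_X\to 0$, after using the identification $f_*\Omega^1_{X,d}\cong \tilde{f}_*\Omega^1_{U,d}$ induced by the birationality of $e$. What has to be checked is therefore that the two subsheaves $f_*\sK_d^\vee$ and $\tilde{f}_*\sL_d^\vee$ of this common ambient sheaf actually coincide; once this is established, Lemma \ref{lemma} forces $\mL_\sF\cong \mD_W$ as subsheaves of $\mD_X$.

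The crucial point is a generic identification of the annihilator subsheaves. Let $X^\circ\subseteq X$ be a big open subset on which $e$ is an isomorphism, $f$ is smooth, and $\sF$ is locally free, and set $U^\circ=e^{-1}(X^\circ)$. By Lemma \ref{lasic}, on $X^\circ$ the foliation $\sF$ coincides with the relative tangent sheaf of the composition $\pi\circ e^{-1}\colon X^\circ\to W$, whereas $\tilde{\sF}|_{U^\circ}=T_{U^\circ/W}$ by construction. Since $de$ is an isomorphism over $U^\circ$, it carries $\tilde{\sF}|_{U^\circ}$ onto $e^{*}(\sF|_{X^\circ})$. Dualising the quotients $T_X\to \sK$ and $T_U\to \sL$, this yields an identification $\sL^\vee|_{U^\circ}\cong e^{*}(\sK^\vee|_{X^\circ})$ of subsheaves of $\Omega_{U^\circ}^1\cong e^{*}\Omega_{X^\circ}^1$.

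To upgrade this to the direct images on $B$, let $V\subseteq B$ be open and $\omega$ a $d$-closed $1$-form on $f^{-1}(V)$. Membership of $\omega$ in $f_*\sK_d^\vee(V)$ is the vanishing on $f^{-1}(V)$ of the image of $\omega$ under the canonical map $\Omega_X^1\to \sF^\vee$; since $\sF$ is saturated and $X$ is smooth, $\sF^\vee$ is torsion-free, so this vanishing is equivalent to its restriction to the dense open $X^\circ\cap f^{-1}(V)$. By the generic identification above, this restriction is equivalent to $e^{*}\omega$ satisfying the analogous condition on $U^\circ\cap \tilde{f}^{-1}(V)$, and hence, again by torsion-freeness of $\tilde{\sF}^\vee$, to $e^{*}\omega\in \tilde{f}_*\sL_d^\vee(V)$. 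This gives $f_*\sK_d^\vee=\tilde{f}_*\sL_d^\vee$ inside $f_*\Omega^1_{X,d}\cong \tilde{f}_*\Omega^1_{U,d}$, and then Lemma \ref{lemma} yields $\mL_\sF\cong \mD_W$. The main obstacle is precisely this last step: one must control the annihilator condition over the exceptional locus of $e$ and the singular fibres of $f$, where the foliations are only saturated rather than everywhere regular. This is handled by the torsion-freeness of $\sF^\vee$ and $\tilde{\sF}^\vee$ together with the smoothness of $X$ and $U$, so no essential complication arises.
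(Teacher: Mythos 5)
Your argument is correct and follows essentially the same route as the paper's: both proofs rest on (i) the identification $f_*\Omega^1_{X}\cong\tilde{f}_*\Omega^1_U$ coming from the birationality of $e$, (ii) the generic agreement of $\sF$ with $T_{U/W}$ over a big open subset, and (iii) torsion-freeness/reflexivity of $\sF^\vee$, $\tilde{\sF}^\vee$ to globalize from that open subset. The paper packages step (iii) as the construction of an injective map $\tilde{f}_*\tilde{\sF}^\vee\to f_*\sF^\vee$ and then applies Lemma~\ref{lemma}, whereas you unwind the same Hartogs/reflexivity argument directly on the annihilator condition to show $f_*\sK^\vee_d=\tilde{f}_*\sL^\vee_d$ as subsheaves of the common ambient; once that equality is in hand the appeal to Lemma~\ref{lemma} is actually superfluous, but harmless. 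This is a cosmetic reorganization rather than a genuinely different proof.
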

\begin{proof}
	We compare the two constructions as follows. Recall that $\mL_{\sF}$ is defined by taking the pushforward via $f$ of the exact Sequence (\ref{uno})
	\begin{equation}
		0\to \sK^\vee\to \Omega_X^1\to \sF^\vee\to M\to 0 
	\end{equation} and considering the quotient of ${f}_*\sK^\vee_d$ with kernel $\omega_B$ as in (\ref{kappa}). On the other hand for $\mD_W$ we dualize Sequence (\ref{due})  to obtain
\begin{equation}\label{primaoccorrenza}
	0\to \sL^\vee\to \Omega^1_{U}\to \tilde{\sF}^\vee\to M'\to 0
\end{equation}  and we proceed exactly as before after taking the pushforward via $\tilde{f}$.
On $B$ we can compare the two sequences obtained after taking the direct image
\begin{equation}
\xymatrix{
	0\ar[r]&f_*\sK^\vee\ar[r]&f_*\Omega^1_{X}\ar[r]&f_*\sF^\vee\ar[r]&\dots\\
	0\ar[r]&\tilde{f}_*\sL^\vee\ar[r]&\tilde{f}_*\Omega^1_{U}\ar[r]\ar@{=}[u]&\tilde{f}_*\tilde{\sF}^\vee\ar[r]&\dots
}
\end{equation}
The equality comes from the fact that $e$ is birational and $\Omega^1_X$ and $e_*\Omega^1_U$ coincide on a open subset of $X$ with complement of codimension at least 2, see also \cite[Exercise 5.3 page 419]{H} in the case of surfaces. 

Similarly we also have a map $\tilde{f}_*\tilde{\sF}^\vee\to f_*\sF^\vee$ as follows. If we consider $A\subset B$ an open subset, every section in $\Gamma(A,\tilde{f}_*\tilde{\sF}^\vee)$ is a section in  $\Gamma(\tilde{f}^{-1}(A),\tilde{\sF}^\vee)$. Now $\tilde{\sF}^\vee$ and $\sF^\vee$ coincide on an open subset in $U$ of the form $e^{-1}(X^0)$, where $X^0$ is an open subset with complement of codimension at least 2 in $X$. Hence our section gives a section of $\sF^\vee$ on the intersection $X^0\cap f^{-1}(A)$. Since $\sF^\vee$ is reflexive, this gives a section of $\Gamma(f^{-1}(A),{\sF}^\vee)=\Gamma(A,{f}_*{\sF}^\vee)$ by the Hartogs principle. This map is injective hence by Lemma \ref{lemma} the above diagram can be completed as follows 
\begin{equation}
	\xymatrix{
		0\ar[r]&f_*\sK^\vee\ar[r]&f_*\Omega^1_{X}\ar[r]&f_*\sF^\vee\ar[r]&\dots\\
		0\ar[r]&\tilde{f}_*\sL^\vee\ar[r]\ar@{=}[u]&\tilde{f}_*\Omega^1_{U}\ar[r]\ar@{=}[u]&\tilde{f}_*\tilde{\sF}^\vee\ar[r]\ar[u]&\dots
	}
\end{equation}
The identity $f_*\sK^\vee=\tilde{f}_*\sL^\vee$ immediately gives the thesis by taking the de Rham closed forms and the quotient of kernel $\omega_B$. 
\end{proof}
  \subsection{Foliations defined by local systems}
  \label{localtofoliation}
We can reverse the point of view to obtain a foliation $\sF\subseteq T_{X/B}$ starting from a local system $\mL\leq\mD_{X}$. By the exact Sequence \ref{dx} 
we obtain the following diagram 
\begin{equation}
	\label{ss}
	\xymatrix{
		0\ar[r]&\omega_B\ar[r]&f_*\Omega^1_{X,d}\ar[r]&\mD_{X}\ar[r]&0\\
		0\ar[r]&\omega_B\ar[r]\ar@{=}[u]&\sS\ar[r]\ar@{^{(}->}[u]&\mL\ar[r]\ar@{^{(}->}[u]&0
	}
\end{equation}
which defines $\sS$ as a subsheaf of $f_*\Omega^1_{X,d}$.

From the natural map $f^*f_*\Omega_{X}^1\to \Omega_{X}^1$ we then get the map $\eta\colon f^{-1}\sS\to \Omega_{X,d}^1\hookrightarrow\Omega_{X}^1$. The image sheaf $\sS_f$ is a subsheaf of $\Omega_{X}^1$. 
Taking the saturation of the subsheaf of $T_{X/B}$ given by the vector fields vanishing on $\sS_f$ we get a foliation $\sF_{\mL}$. More precisely call $\sF'$
\begin{equation}
	\label{defF}
\sF'(U):=\{v\in T_{X/B}(U)\mid \forall x\in U, \exists U_x\text{ with }x\in U_x\subset U\text{ such that } \iota_v s=0 \text{ for every }s\in \sS_f(U_x)   \}
\end{equation} where as usual $\iota$ denotes the contraction. Note that actually $\sF'$ is a sheaf since it inherits its sheaf properties from $T_{X/B}$.
We show that it is closed under Lie bracket. Let $v,w$ be sections of $\sF'$, then
$$
\iota_{[v,w]} s = \mathcal{L}_v \iota_w s- \iota_w \mathcal{L}_v s=- \iota_w \mathcal{L}_v s
$$ where $\sL$ is the Lie derivative. Since also 
$$
\mathcal{L}_v s=\iota_v ds+d(\iota_v s)=0
$$ we have the desired result since $s\in \sS_f\subset \Omega^1_{X,d}$ is closed. The saturation $\sF_{\mL}$ of $\sF'$ is then a foliation on $X$. Indeed $\sF_{\mL}$ and $\sF'$ coincide on an open dense subset $X^0$ of $X$. Hence if we take $v\in \Gamma(\sF_{\mL},V)$ a section of $\sF_{\mL}$ on an open subset $V$, then the map given by the Lie bracket
$$
[v,-]\colon \sF_{\mL}|_V\to T_X/\sF_{\mL}|_V
$$ is zero on $X^0\cap V$. By the very definition of saturation, $T_X/\sF_{\mL}$ is torsion free, this implies that the above map is identically zero, hence $\sF_{\mL}$ is also closed under Lie bracket. 
Finally we note that  $\sF_{\mL}$ is not necessarily an algebraically integrable foliation.

\subsection{The Correspondence}

By the above constructions we have two maps between the set of foliations contained in $T_{X/B}$ and the set of local systems contained in $\mD_{X}$
\begin{equation}
\label{corrisponde}
\big\{\text{foliations } \sF\subseteq T_{X/B}\big\}\stackrel[\beta]{\alpha}{\rightleftarrows} \big\{\text{local systems } \mL\leq \mD_{X}\big\}
\end{equation}
defined by $\alpha(\sF)=\mL_{\sF}$ and $\beta(\mL)=\sF_{\mL}$. These maps are not in general inverse of each other; for example it may very well be that different foliations give the same local system. 
 The following proposition is an example of this and will also be useful later.

\begin{prop}
	\label{uguaglianzalocalsys}
Take $\sF\subseteq T_{X/B}\subset T_X$ be an algebraically integrable foliation and $\pi$ the associated map as in Diagram (\ref{diagfol}). If the general fiber $F$ of $\pi$ is regular, that is $h^0(F,\Omega^1_F)=0$, then $\mL_\sF=\mD_X$.
\end{prop}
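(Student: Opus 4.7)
The plan is to use Proposition \ref{stesso} to reformulate $\mL_\sF$ via the extrinsic construction on $\pi\colon U\to W$, and then to show that, under the regularity hypothesis on the general $\pi$-fiber, every closed holomorphic $1$-form on $U$, sectioned over an open set of the form $\tilde{f}^{-1}(A)$, already lies in $\sL^\vee$. Indeed, $\mL_\sF=\mD_W$ is realised as the cokernel of $\omega_B$ inside $\tilde{f}_*\sL^\vee_d\subseteq \tilde{f}_*\Omega^1_{U,d}\cong f_*\Omega^1_{X,d}$, while $\mD_X$ is the analogous cokernel inside $f_*\Omega^1_{X,d}$, so the equality $\mL_\sF=\mD_X$ will follow once we prove $\tilde{f}_*\sL^\vee_d=\tilde{f}_*\Omega^1_{U,d}$.

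Fix $A\subseteq B$ open, put $V=\tilde{f}^{-1}(A)$, and let $\omega\in \Gamma(V,\Omega^1_{U,d})$. The heart of the argument is the following observation: for a general (smooth) fiber $F$ of $\pi$, the restriction of $\omega$ to $F$ is a holomorphic section of $\Omega^1_U|_F$, and its image under the surjection $\Omega^1_U|_F\twoheadrightarrow \Omega^1_F$ is a global holomorphic $1$-form on $F$. By hypothesis $h^0(F,\Omega^1_F)=0$, so this image vanishes identically on $F$; equivalently, $\omega_x$ annihilates $T_{F,x}=\tilde{\sF}_x$ for every $x\in F$. As the general $\pi$-fibers sweep out an open dense subset of $V$, the image $\bar\omega$ of $\omega$ under the map $\Omega^1_U\to \tilde{\sF}^\vee$ from Sequence (\ref{primaoccorrenza}) vanishes on a dense open subset of $V$. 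But $\Omega^1_U/\sL^\vee$ embeds into $\tilde{\sF}^\vee$, hence is torsion-free, and so $\bar\omega\equiv 0$ on $V$, i.e., $\omega\in \Gamma(V,\sL^\vee)$ as desired.

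The main delicate point is this generic-to-global upgrade: the regularity hypothesis is only a statement about general fibers, and the correct extension mechanism is the torsion-freeness of $\Omega^1_U/\sL^\vee$ inherited from its embedding in $\tilde{\sF}^\vee$. Everything else is a direct unpacking of the identification $\mL_\sF=\mD_W$ given by Proposition \ref{stesso}, together with the pointwise translation between annihilating vertical tangent vectors and belonging to the conormal sheaf of the foliation $\tilde{\sF}$.
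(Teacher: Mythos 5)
Your proof is correct and follows essentially the same route as the paper: both reduce via Proposition \ref{stesso} to showing $\tilde{f}_*\sL^\vee=\tilde{f}_*\Omega^1_U$ (you restrict to closed forms, which suffices), restrict a section to a general $\pi$-fiber where regularity forces it to vanish, and then use torsion-freeness (you of $\Omega^1_U/\sL^\vee$, the paper of $\tilde{\sF}^\vee$, which is the same mechanism since the former embeds in the latter) to upgrade generic vanishing to membership in $\sL^\vee$.
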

\begin{proof}By Proposition \ref{stesso}, we use the interpretation of the local system as $\mD_W$, that is we consider the exact sequence (\ref{primaoccorrenza})
\begin{equation}
\label{seqlemma}
	0\to \sL^\vee\to \Omega^1_{U}\to \tilde{\sF}^\vee\to M'\to 0.
\end{equation}
It will be enough to show that $\tilde{f}_*\sL^\vee\cong\tilde{f}_*\Omega^1_{U}$. One inclusion is trivial, to prove the opposite, take a section $s\in \Gamma(A,\tilde{f}_*\Omega^1_{U})$ on an open $A\subset B$. This is of course a section in $\Gamma(\tilde{f}^{-1}(A),\Omega^1_{U})$ and we show that it goes to zero in $\Gamma(\tilde{f}^{-1}(A),\tilde{\sF}^\vee)$. Note that  $\tilde{\sF}^\vee$ is the double dual of $\Omega_{U/W}^1$ since Sequence \ref{seqlemma} is obtained by dualizing twice
$$
0\to \pi^*\Omega^1_W\to \Omega^1_U\to \Omega^1_{U/W}\to 0.
$$
Hence $\tilde{\sF}^\vee$ is torsion free and, restricted on the general fiber, coincides with the sheaf of 1-forms on such a fiber. The section $s$ restricts to a global 1-form on the general fiber, hence vanishes by hypothesis. Since $\tilde{\sF}^\vee$ is torsion free we conclude that $s$ is zero in $\Gamma(\tilde{f}^{-1}(A),\tilde{\sF}^\vee)$ therefore $s\in \Gamma(A,\tilde{f}_*\sL^\vee)$ and this concludes the proof.
\end{proof}

In the setting of this Proposition, we immediately see that the local systems $\mL_\sF=\alpha(\sF)$ and $\mL_{0}=\alpha(0)$ are both equal to $\mD_X$ (see Example \ref{casibanali}) even if the foliation $\sF$ is different from the trivial foliation.

Nevertheless in the next sections we will give some description of the fixed points of this correspondence, that is foliations $\sF$ with $\beta(\alpha(\sF))=\sF$ and local systems $\mL$ with $\alpha(\beta(\mL))=\mL$.

\section{Towers of fibrations}
\label{sez4}
In this section we consider towers of semistable fibrations over a smooth curve $B$ and we study the relation with local systems $\mL$ and foliations $\sF$.

\subsection{2-Towers}
Let $X$ and $Y$ be two smooth algebraic varieties of dimension $n$ and $m$ respectively. Let $f\colon X\to B$  be a semistable fibrations and $h\colon X\to Y$, $g\colon Y\to B$ fibrations such that $f=h\circ g$. That is we are considering the following situation 
\begin{equation}
\xymatrix{
X\ar[d]^{h}\ar@/_2pc/[dd]_{f}\\
Y\ar[d]^{g}\\
B
}
\end{equation}  that we call a $2$-Tower.

By Section \ref{sez2}, we have the local systems $\mD_X$ and $\mD_Y$ which we recall are defined by the exact sequences 
\begin{equation}
\label{dx1}
0\to \omega_B\to f_*\Omega^1_{X,d}\to \mD_X\to 0.
\end{equation}
 and 
\begin{equation}
\label{dy}
0\to \omega_B\to g_*\Omega^1_{Y,d}\to \mD_Y\to 0 
\end{equation} respectively.

The first easy relation between $\mD_X$ and $\mD_Y$ is given by the following proposition
\begin{prop}
In our setting, we have an inclusion of local systems $\mD_Y\leq\mD_X$.
\end{prop}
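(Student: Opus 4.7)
The plan is to produce a natural injective morphism of sheaves $\mD_Y \hookrightarrow \mD_X$ by comparing the defining sequences (\ref{dx1}) and (\ref{dy}) via pullback of closed $1$-forms along $h$, and then to note that any injective morphism of local systems is an inclusion of local systems.

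First I would build the comparison map. Since $h\colon X\to Y$ is a morphism of smooth varieties, the differential gives a pullback $h^{*}\Omega^1_Y\to \Omega^1_X$ which commutes with $d$; by adjunction this yields a sheaf map $\Omega^1_{Y,d}\to h_*\Omega^1_{X,d}$. Because $h$ is a fibration (hence dominant), this map is injective: any closed $1$-form on $Y$ that pulls back to zero vanishes on a Zariski open subset of $Y$, and by torsion-freeness of $\Omega^1_{Y,d}$ it is zero. Applying the left exact functor $g_*$ and using $g_*h_* = f_*$ (since $f = g\circ h$, correcting the typo in the setup), I obtain an injection
$$
g_*\Omega^1_{Y,d}\hookrightarrow f_*\Omega^1_{X,d}.
$$

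Next I would check that this injection is compatible with the canonical inclusions of $\omega_B$ on both sides. The inclusion $\omega_B\hookrightarrow g_*\Omega^1_{Y,d}$ is induced from $g^*\omega_B\hookrightarrow \Omega^1_Y$ (the relative differential sequence (\ref{relativo}) applied to $g$), and the inclusion $\omega_B\hookrightarrow f_*\Omega^1_{X,d}$ is induced from $f^*\omega_B\hookrightarrow \Omega^1_X$. Since $f^*\omega_B = h^*g^*\omega_B$, the pullback of $g^*\omega_B\hookrightarrow \Omega^1_Y$ along $h$, followed by the composition with $\Omega^1_X$, agrees with $f^*\omega_B\hookrightarrow \Omega^1_X$. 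This gives a commutative diagram
\begin{equation*}
\xymatrix{
0 \ar[r] & \omega_B \ar[r] \ar@{=}[d] & g_*\Omega^1_{Y,d} \ar[r] \ar@{^{(}->}[d] & \mD_Y \ar[r] \ar[d] & 0 \\
0 \ar[r] & \omega_B \ar[r] & f_*\Omega^1_{X,d} \ar[r] & \mD_X \ar[r] & 0.
}
\end{equation*}

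Finally, the snake lemma applied to this diagram produces
$$
0\to \ker(\mathrm{id}_{\omega_B}) \to \ker\bigl(g_*\Omega^1_{Y,d}\hookrightarrow f_*\Omega^1_{X,d}\bigr)\to \ker(\mD_Y\to\mD_X)\to \coker(\mathrm{id}_{\omega_B}) \to \cdots
$$
The outer two terms and the middle kernel vanish, so $\mD_Y\to\mD_X$ is injective. Since both sheaves are local systems on $B$, this realises $\mD_Y$ as a sub-local system of $\mD_X$. The only subtle point in the argument is verifying that the pullback of closed forms along $h$ is injective at the level of sheaves; this is automatic from dominance of $h$ together with smoothness, so no further assumption on $h$ or $g$ is needed beyond their being fibrations.
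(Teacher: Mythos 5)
Your proposal is correct and follows essentially the same route as the paper's proof: pullback of closed $1$-forms along $h$ gives an inclusion $g_*\Omega^1_{Y,d}\hookrightarrow f_*\Omega^1_{X,d}$ compatible with the two $\omega_B$ sub-sheaves, and a comparison of the defining short exact sequences yields the inclusion $\mD_Y\leq\mD_X$. You have simply spelled out the details (injectivity via dominance of $h$, compatibility with $\omega_B$ via $f^*\omega_B = h^*g^*\omega_B$, and the snake-lemma conclusion) that the paper leaves implicit, and you rightly flag the typo $f=h\circ g$ for $f=g\circ h$.
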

\begin{proof}
By the inclusion of sheaves $g_*\Omega^1_{Y,d}\subseteq f_*\Omega^1_{X,d}$ given by pullback via $h$, we immediately have the thesis comparing the above sequences (\ref{dx1}) and (\ref{dy})
\begin{equation}
\xymatrix{
0\ar[r]&\omega_B\ar[r]\ar@{=}[d]&f_*\Omega^1_{X,d}\ar[r]&\mD_X\ar[r]&0\\
0\ar[r]&\omega_B\ar[r]&g_*\Omega^1_{Y,d}\ar[r]\ar@{^{(}->}[u]&\mD_Y\ar[r]\ar@{^{(}->}[u]&0
}
\end{equation}
\end{proof}
Both Sequence (\ref{dx1}) and (\ref{dy}) split by \cite[Lemma 2.2]{RZ4} and the splitting on (\ref{dx1}) can be chosen in agreement with the one on (\ref{dy}).

A natural question is determining to what extent the local system $\mD_Y$ recovers the variety $Y$. More precisely, if we have a semistable fibration $f\colon X\to B$ and a local system $\mL\leq\mD_X$, under which hypothesis on $\mL$ and in what sense can we recover the variety $Y$ and the morphisms $g$ and $h$? 

To approach this problem we need the famous classical theory of Castelnuovo and de Franchis. We briefly recall that this result states that if $S$ is a smooth complex surface with two linearly independent one forms $\eta_1,\eta_2$ such that $\eta_1\wedge\eta_2=0$, then there exists a morphism onto a smooth curve $p\colon S\to C$ and furthermore $\eta_1,\eta_2$ are pullback via $p$ of one forms on $C$. 

This result has been later generalized for higher dimensional varieties by Catanese \cite[Theorem 1.14]{Ca2} and Ran \cite[Prop II.1]{Ran}. We also have proved a generalization and a relative version of this result in \cite[Theorem 5.6 and Theorem 5.8]{RZ4}.

 The idea behind all these generalizations is the following.
Let $X$ be an $n$-dimensional smooth variety and $\omega_1,\dots, \omega_l \in H^0 (X,\Omega^1_X)$ linearly independent 1-forms such that $\omega_{j_1}\wedge\dots\wedge \omega_{j_{k+1}}= 0$ for every $j_1,\dots,j_{k+1}$ and that no collection of $k$ linearly independent forms in the span of $\omega_1,\dots, \omega_{j_{k+1}}$ wedges to zero. Then there exists a holomorphic map $p\colon X\to Y$ over a $k$-dimensional normal variety $Y$ such that $\omega_i\in p^*H^0(Y,\Omega^1_Y)$. The crucial point in the proof is that these global 1-forms naturally define a foliation on $X$ whose leaves are closed on a good open set $X\setminus \Sigma$, with $\codim \Sigma\geq 2$. Furthermore on the universal covering $X_U\to X$ these forms are exact, $\omega_i=dF_i$, and the functions $F_i$ define a holomorphic map $\phi \colon X_U\to \mC^l$ constant on the leaves of the foliation. The action of the fundamental group of $X$ on the image of $\phi$ is induced by the effect of the deck transformations on the $F_i$ by 
\begin{equation}
	\label{action}
	F_i(\gamma x)=F_i(x)+c_\gamma
\end{equation}
 Hence the action of $\phi(X_U)$ is properly discontinuous and we get $Y$ as the normalization of the quotient.

Now note that given our local system $\mD_{X}$, the wedge product naturally gives a map $\bigwedge^{i} \mD_X\to  \mD_X^i$ because the wedge of closed forms is a closed form. Considering $\mL\leq\mD_X$ as above we naturally have the restrictions $\bigwedge^{i} \mL\to  \mD_X^i$ and inspired by the Castelnuovo-de Franchis result we give the following definition
\begin{defn}
	\label{deflocsys}
	We say that  $\mL\leq\mD_X$ is an {\it{order $m$ Castelnuovo generated}} local system if it is generated under the monodromy action by a vector space $V\leq\Gamma(A,\mL)$ on a contractible open set $A\subset B$ such that  $\dim V \geq m+1$, the map $\bigwedge^{m} V\to  \mD_X^m$ is zero while  $\bigwedge^{m-1} V\to  \mD_X^{m-1}$
	is injective on decomposable elements.
	
	We say that  $\mL$ is {\it{of Castelnuovo-type of order $m$}} if  $\rank\mL \geq m+1$, $\bigwedge^{m} \mL\to  \mD_X^m$ is zero while  $\bigwedge^{m-1} \mL\to  \mD_X^{m-1}$
	is injective on decomposable elements.
\end{defn}

We will need the following Lemma which has its own interest.
\begin{lem}
\label{tecnico}
If $s_1,\dots,s_{m+1}$ are 1-forms on $X$ such that the wedge product of every $m$-uple is an $m$-form which vanishes when restricted to the fibers of $f$, then the $m+1$-wedge product $s_1\wedge\dots\wedge s_{m+1}$ is  zero on $X$.
\end{lem}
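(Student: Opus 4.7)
The statement asserts the vanishing of a section of the locally free sheaf $\Omega^{m+1}_X$, so it suffices to prove the identity on the open dense subset where $f$ is smooth and then extend by the torsion-freeness of $\Omega^{m+1}_X$. With this reduction, my plan is to pass to an analytic coordinate chart $(t,y_1,\dots,y_{n-1})$ around a smooth fiber point, chosen so that $f$ is given by the coordinate $t$ and $f^*\omega_B=\sO_X\cdot dt$.

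The one conceptual input is the characterization of forms vanishing on fibers: a $k$-form $\omega$ restricts to zero on every fiber of $f$ exactly when it lies in $dt\wedge \Omega^{k-1}_X$, i.e.\ when its expansion in the basis $\{dt,dy_1,\dots,dy_{n-1}\}$ has trivial $dt$-free part. Writing each $s_i=a_i\,dt+\beta_i$ with $\beta_i\in \sO_X\langle dy_1,\dots,dy_{n-1}\rangle$, the expansion
\[
s_{i_1}\wedge\cdots\wedge s_{i_m}=\beta_{i_1}\wedge\cdots\wedge\beta_{i_m}+dt\wedge\sum_{k=1}^{m}(-1)^{k-1}a_{i_k}\,\beta_{i_1}\wedge\cdots\widehat{\beta_{i_k}}\cdots\wedge\beta_{i_m}
\]
then converts the hypothesis into the locally valid identity $\beta_{i_1}\wedge\cdots\wedge\beta_{i_m}=0$ in $\Omega^m_X$ for every $m$-subset $\{i_1,\dots,i_m\}\subset\{1,\dots,m+1\}$.

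Applying the same expansion to the $(m+1)$-fold wedge gives
\[
s_1\wedge\cdots\wedge s_{m+1}=\beta_1\wedge\cdots\wedge\beta_{m+1}+dt\wedge\sum_{k=1}^{m+1}(-1)^{k-1}a_k\,\beta_1\wedge\cdots\widehat{\beta_k}\cdots\wedge\beta_{m+1}.
\]
The first summand factors as $(\beta_1\wedge\cdots\wedge\beta_m)\wedge\beta_{m+1}$ and so vanishes, and every term under the sum in the second summand is, up to sign and the scalar $a_k$, one of the $m$-fold wedges $\beta_{j_1}\wedge\cdots\wedge\beta_{j_m}$ already shown to vanish. Therefore $s_1\wedge\cdots\wedge s_{m+1}=0$ on the smooth locus of $f$, and hence identically on $X$.

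I do not anticipate a substantive obstacle. The content is purely formal once the hypothesis is read as ``no $dt$-free component''. The only point requiring a little care is the presence of singular fibers in a semistable fibration, where $\Omega^1_{X/B}$ fails to be locally free; this is sidestepped by restricting to the smooth locus of $f$ before doing the computation and then invoking the fact that $\Omega^{m+1}_X$ is locally free on the smooth variety $X$, so that a section vanishing on an open dense subset vanishes everywhere.
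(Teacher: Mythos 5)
Your proof is correct and is essentially the paper's argument: both reduce to a local coordinate computation near the smooth fibers, where the hypothesis says exactly that the $dt$-free ($m\times m$ minor) part of every $m$-fold wedge vanishes, and then the $(m+1)$-fold wedge vanishes term by term. Your decomposition $s_i=a_i\,dt+\beta_i$ is just a more explicit rendering of the paper's minor-expansion bookkeeping, so there is nothing substantive to add.
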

\begin{proof}
It is enough to prove this vanishing on a suitable open subset of $X$. Consider local coordinates $x_1,\dots,x_{n-1},t$, with $t$ being the variable on the base $B$. The wedge product $s_1\wedge\dots\wedge s_{m+1}$ is an $m+1$-form locally given by the $m+1\times m+1$ minors of the $m+1\times n$ matrix obtained by the local expression of the $s_i$.
The hypothesis that all the possible $m$-wedge products are zero when restricted to the fibers means that all the $m\times m$ minors where $dt$ does not appear are zero. From this it easily follows that all the $m+1\times m+1$ minors are zero. 
\end{proof}

The first result is the following
\begin{thm}
\label{castel}
Let $f\colon X\to B$ be a semistable fibration and $\mL\leq\mD_X$ an order $m$ Castelnuovo generated local system. Then up to a base change $\tilde{f}\colon \widetilde{X}\to \widetilde{B}$ there exist a normal $m$-dimensional complex space $\widetilde{Y}$ and holomorphic maps $\tilde{g}\colon\widetilde{Y}\to\widetilde{B}  $ and $\tilde{h}\colon \widetilde{X}\to\widetilde{B}$ such that $\tilde{f}=\tilde{h}\circ \tilde{g}$.
\end{thm}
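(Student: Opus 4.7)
The plan is to translate the Castelnuovo generation hypothesis into the setup of the relative higher-dimensional Castelnuovo-de Franchis theorem applied to closed $1$-forms on $X$, and then to run the standard construction after a base change $\widetilde{B}\to B$ chosen to trivialize the monodromy of the generating subspace $V$.

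First I would fix a contractible open $A\subset B$ on which the generating $V\leq\Gamma(A,\mL)$ is defined, together with a basis $s_{1},\dots,s_{m+1}$ of $V$. Sequence (\ref{dx}) splits by \cite[Lemma 2.2]{RZ4}, so each $s_{i}$ lifts to a section $\widetilde{s}_{i}\in\Gamma(A,f_{*}\Omega^{1}_{X,d})$, i.e.\ to a closed holomorphic $1$-form $\omega_{i}$ on $f^{-1}(A)$. By Definition \ref{locsys} the hypothesis $\bigwedge^{m}V\to\mD^{m}_{X}=0$ says exactly that every $m$-fold wedge $\omega_{i_{1}}\wedge\dots\wedge\omega_{i_{m}}$ vanishes when restricted to the fibers of $f$, and Lemma \ref{tecnico} promotes this to the full vanishing $\omega_{1}\wedge\dots\wedge\omega_{m+1}=0$ on $f^{-1}(A)$. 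The injectivity of $\bigwedge^{m-1}V\to\mD^{m-1}_{X}$ on decomposables rules out any such relation already holding among $m$ of the $\omega_{i}$ on a generic fiber.

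These are precisely the input conditions for the relative higher-dimensional Castelnuovo-de Franchis theorem \cite[Thm.~5.8]{RZ4} (or for a direct adaptation of \cite[Thm.~1.14]{Ca2} after adjoining the coordinate form $f^{*}dt_{A}$). It produces a holomorphic factorization $f^{-1}(A)\to Y_{A}\to A$ through a normal $m$-dimensional complex space $Y_{A}$, such that the $\omega_{i}$ are pullbacks from $Y_{A}$; on the universal cover this is the standard construction $\omega_{i}=dF_{i}$, $\phi=(F_{1},\dots,F_{m+1})$, and quotient by the properly discontinuous action (\ref{action}) of the deck group. To globalize, I take a connected \'etale cover $\widetilde{B}\to B$ on which the monodromy action on the sub-local-system of $\mL$ generated by $V$ becomes trivial; since by Castelnuovo generation $V$ generates $\mL$ under monodromy, this trivializes $\mL$ itself on $\widetilde{B}$, so the local forms $\omega_{i}$ extend to single-valued closed $1$-forms $\widetilde{\omega}_{i}$ on the base change $\widetilde{X}$. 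The local constructions $Y_{A}$ then glue into a single normal $m$-dimensional complex space $\widetilde{Y}$ with the desired maps $\widetilde{g}$ and $\widetilde{h}$.

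The main obstacle is precisely this gluing step. I would need to verify that the chosen cover $\widetilde{B}\to B$ genuinely suffices to extend every $\omega_{i}$ single-valuedly (rather than only after some larger cover matching the full monodromy of $\mD_{X}$), that the translation constants $c_{\gamma}$ appearing in the deck action (\ref{action}) match consistently across patches so that the local analytic quotients assemble into a genuine normal complex space rather than an orbifold, and that $\widetilde{g}\colon\widetilde{Y}\to\widetilde{B}$ is holomorphic and not merely meromorphic. The dimension count $\dim\widetilde{Y}=m$ should follow from the sharpness clause in the Castelnuovo-type definition, but guaranteeing normality, holomorphy of $\widetilde{g}$, and the correct dimension simultaneously is where the real technical content of the proof lies.
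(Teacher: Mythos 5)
Your overall strategy — trivialize the monodromy of the generating subspace by a base change $\widetilde{B}\to B$, lift the sections of $V$ to closed $1$-forms, invoke Lemma \ref{tecnico} to get vanishing of $(m+1)$-fold wedges, and apply the Castelnuovo-de Franchis theorem — is the same as the paper's. But there is a genuine gap in the middle of your argument, plus two secondary inefficiencies.

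The gap is the case analysis. You write that the injectivity of $\bigwedge^{m-1}V\to\mD_X^{m-1}$ on decomposables ``rules out any such relation already holding among $m$ of the $\omega_i$''; it does not — it rules out relations among $m-1$ of them (and only when restricted to fibers). Nothing in the hypotheses prevents some $m$-tuple of lifted forms from wedging to zero \emph{on $\widetilde{X}$}: the hypothesis $\bigwedge^m V\to\mD_X^m=0$ only governs restriction to fibers, and Lemma \ref{tecnico} promotes that only to vanishing of the $(m+1)$-fold wedge on the total space. If some $m$-fold wedge of the $\widetilde{\omega}_i$ is identically zero on $\widetilde{X}$, then the Castelnuovo-de Franchis construction (applied directly to those forms) gives a map $\tilde h'$ to a space $\widetilde{Z}$ of dimension $m-1$, not $m$. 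The paper handles exactly this second case: using the nonvanishing of $(m-1)$-fold wedges on fibers, it shows that $\tilde h'$ restricted to a fiber of $\tilde f$ is surjective onto $\widetilde{Z}$, and then defines $\widetilde{Y}$ as the Stein factorization of $\tilde h'\times\mathrm{id}\colon\widetilde{X}\to\widetilde{Z}\times\widetilde{B}$, which does have dimension $m$. Your proof, as written, silently assumes the first case (no $m$-fold wedge vanishes globally), and would fail otherwise.

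Two secondary points. First, the local construction of $Y_A$ on $f^{-1}(A)$ followed by a gluing step is unnecessary: once you have done the base change and have single-valued closed $1$-forms $\widetilde{\omega}_i$ on $\widetilde{X}$, you can apply the Castelnuovo-de Franchis theorem \emph{globally} on $\widetilde{X}$ and obtain $\widetilde{Y}$ directly — this is what the paper does, and it sidesteps all the gluing and patching issues you rightly flag as the main obstacle. (The only subtlety, which the paper notes, is that $\widetilde{X}$ may be non-compact if the monodromy is infinite; the fact that the $\widetilde{\omega}_i$ are closed is exactly what lets the construction go through in that generality.) Second, even in the first case you do not verify that $\tilde f$ actually factors through $\tilde h$; the paper does this via an incidence-variety argument, using that the nonvanishing/vanishing conditions on $m$- and $(m-1)$-fold wedges force $\tilde h$ to map each fiber of $\tilde f$ onto an $(m-1)$-dimensional subvariety with connected fibers.
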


\begin{proof}

To the local system $\mL$ we associate its monodromy group as follows. The action $\rho$ of the fundamental group $\pi_1(B, b)$ on the stalk of $\mD_X$ restricts to an action $\rho_\mL$ on the stalk of $\mL$. Denote $\ker \rho_\mL$ by $H_\mL$ and consider $\widetilde{B}\to B$ the covering classified by $H_\mL$. Of course $\tilde{f}\colon \widetilde{X}\to \widetilde{B}$ denotes the associated base change.

The inverse image of the local system $\mL$ on $\widetilde{B}$ is trivial and so the lifting of the sections of $\mL$, obtained as recalled above by \cite[Lemma 2.2]{RZ4}, are global 1-forms in $\tilde{f}_*\Omega^{1}_{\widetilde{X},d}$, in particular $V\leq H^0(\widetilde{B},\tilde{f}_*\Omega^{1}_{\widetilde{X},d})$.

 The condition of triviality of $\bigwedge^{m} V\to  \mD_X^m$ holds also on $\widetilde{X}$. Hence, as we have seen by Lemma \ref{tecnico}, we have the vanishing of $\bigwedge^{m+1} V\to\tilde{f}_*\Omega^{m+1}_{\widetilde{X},d}$ and we get a set of closed $1$-forms on $\widetilde{X}$ such that every possible $m+1$-wedge is zero. On the other hand, by the hypothesis on $\bigwedge^{m-1} V\to  \mD_X^{m-1}$, no collection of $m-1$ linearly independent forms wedges to zero.
Hence we have two cases, either no collection of $m$ linearly independent forms wedges to zero or there are at least $m$ of these forms with zero wedge.

In the first case by the Castelnuovo-de Franchis Theorem \cite[Theorem 1.14]{Ca2} there exist a foliation on $\widetilde{X}$ defined by the elements of $V$ which gives, as recalled above, a normal $m$-dimensional complex space $\widetilde{Y}$ and a morphism $\tilde{h}\colon  \widetilde{X}\to \widetilde{Y}$ such that all these global 1-forms on $\widetilde{X}$ are pullback via $\tilde{h}$ of 1-forms on $\widetilde{Y}$. 

Note that since the covering may not be finite, $\widetilde{B}$ and $\widetilde{X}$ may not be compact. Hence it is necessary that our 1-forms are closed so that they still define an integrable foliation.

To conclude our proof in this case we need to show that  $\tilde{f}$ factors via $\tilde{h}$. It is enough to consider the   morphism
	$$
	\widetilde{X}\times\widetilde{B}\xrightarrow{\tilde{h}\times\text{id}}\widetilde{Y}\times\widetilde{B}\xrightarrow{p}\widetilde{B}
	$$ where $p$ is the projection. Then $	\widetilde{X}$ is isomorphic to the incidence variety $I\subset 	\widetilde{X}\times\widetilde{B}$. Furthermore note that the hypotheses on $\bigwedge^{m} V\to  \mD_X^m$ and $\bigwedge^{m-1} V\to \mD_X^{m-1}$ ensure that if $\widetilde{F}$ is a fiber of $\tilde{f}$ then $\tilde{h}(\widetilde{F})$ is exactly a $(m-1)$-dimensional subvariety of $\widetilde{Y}$. In particular for $y\in \tilde{h}(\widetilde{F})$ we have that $\tilde{h}_{|\widetilde{F}}^{-1}(y)\subseteq \tilde{h}^{-1}(y)$ are of the same dimension. Since the fibers of $\tilde{h}$ are connected, we  have that $\tilde{h}_{|\widetilde{F}}^{-1}(y)= \tilde{h}^{-1}(y)$ and $\widetilde{Y}$ is isomorphic to the image $J:=(\tilde{h}\times\text{id})(I)$; we define $\tilde{g}$ as the restriction of $p$ to $J$.

In the second case, that is when there are at least $m$ forms among the liftings of $V$ with zero wedge, we can also apply the Castelnuovo-de Franchis Theorem \cite[Theorem 1.14]{Ca2}. In this case however we obtain a map $\tilde{h}'$ to a $m-1$-dimensional variety $\widetilde{Z}$. By our hypothesis on $\bigwedge^{m-1} V\to \mD_X^{m-1}$ it immediately follows that the restriction $\tilde{h}'_{|\widetilde{F}}$ is surjective onto $\widetilde{Z}$. Hence we define $\widetilde{Y}$ as the Stein factorization of $\tilde{h}'\times\text{id}$ onto $\widetilde{Z}\times \widetilde{B}$. Note that $\tilde{g}\colon \widetilde{Y}\to \widetilde{B}$ is immediately induced by the projection on $\widetilde{B}$.
\end{proof}

Clearly the condition on $\bigwedge^{m-1} V\to \mD_{X}^{m-1}$ fixes the dimension of $\widetilde{Y}$ to be exactly $m$. Removing this condition we obtain the following corollary:
\begin{cor}
	Let $f\colon X\to B$ be a semistable fibration and $\mL\leq\mD_X$ a local system generated by $V$ with $\dim V \geq m+1$ and such that the map $\bigwedge^{m} V\to  \mD_X^m$ is zero. Then up to a base change $\tilde{f}\colon \widetilde{X}\to \widetilde{B}$ there exist a normal complex space $\widetilde{Y}$ of dimension $\leq m$ and morphisms $\tilde{g}$ and $\tilde{h}$ such that $\tilde{f}=\tilde{h}\circ \tilde{g}$.
\end{cor}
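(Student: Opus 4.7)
The plan is to adapt the proof of Theorem \ref{castel}, dropping the injectivity assumption on $\bigwedge^{m-1} V\to \mD_X^{m-1}$ and paying the price of only bounding, rather than pinning down, the dimension of $\widetilde{Y}$.

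First I would form the base change $\widetilde{B}\to B$ classified by the kernel of the monodromy representation $\rho_{\mL}\colon\pi_1(B,b)\to \Aut(\mL_b)$, exactly as in Theorem \ref{castel}. On $\widetilde{B}$ the pulled-back local system is trivial, and by \cite[Lemma 2.2]{RZ4} the generating space $V$ lifts to a space of global closed $1$-forms on $\widetilde{X}$, which I denote again by $V$. Since the map $\bigwedge^{m} V\to \mD_X^m$ is zero, Lemma \ref{tecnico} gives $\bigwedge^{m+1} V\equiv 0$ on $\widetilde{X}$. Let $k$ be the largest integer for which some $k$ elements of $V$ have non-zero wedge on $\widetilde{X}$; by the above $1\leq k\leq m$.

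Second, I would invoke the Catanese--Ran version of the Castelnuovo--de Franchis theorem (\cite[Theorem 1.14]{Ca2}) applied to $V$ with this $k$: it yields a holomorphic map $\tilde{h}'\colon\widetilde{X}\to\widetilde{Z}$ to a normal complex space $\widetilde{Z}$ of dimension exactly $k$, such that every form in $V$ is a pullback from $\widetilde{Z}$. Then, exactly as in the second case of the proof of Theorem \ref{castel}, I would define $\widetilde{Y}$ as the Stein factorization of the graph-type morphism $\widetilde{X}\to\widetilde{Z}\times\widetilde{B}$ given by $(\tilde{h}',\tilde{f})$; the projection to the second factor provides $\tilde{g}\colon\widetilde{Y}\to\widetilde{B}$, and the first Stein factor provides $\tilde{h}\colon\widetilde{X}\to\widetilde{Y}$ with $\tilde{g}\circ\tilde{h}=\tilde{f}$.

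Finally, to bound $\dim\widetilde{Y}$: when $k\leq m-1$ it is immediate that $\dim\widetilde{Y}\leq\dim(\widetilde{Z}\times\widetilde{B})=k+1\leq m$. In the remaining case $k=m$, the hypothesis $\bigwedge^{m} V\to \mD_X^m=0$ says that the wedges of the pullbacks $\tilde{h}'{}^{*}\eta_i$ restrict to zero on every fiber of $\tilde{f}$; since these forms generically span the cotangent directions coming from $\widetilde{Z}$, the image $\tilde{h}'(\widetilde{F})$ of a general fiber $\widetilde{F}$ of $\tilde{f}$ has dimension at most $m-1$, whence $(\tilde{h}',\tilde{f})(\widetilde{X})\subseteq\widetilde{Z}\times\widetilde{B}$ has dimension at most $m$ and so does $\widetilde{Y}$. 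The main obstacle is precisely this last step: controlling $\dim\widetilde{Y}$ when $k=m$ requires translating the abstract relative vanishing encoded in $\mD_X^m$ into the geometric statement that fibers of $\tilde{f}$ are contracted by $\tilde{h}'$ to subvarieties of positive codimension in $\widetilde{Z}$. Everything else is a routine adaptation of Theorem \ref{castel}.
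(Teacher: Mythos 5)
Your proposal is correct and it is essentially the proof the paper intends, since the paper itself gives no separate argument for this Corollary: it simply notes, just before the statement, that removing the injectivity hypothesis on $\bigwedge^{m-1}V\to\mD_X^{m-1}$ turns the equality $\dim\widetilde{Y}=m$ of Theorem \ref{castel} into an inequality $\dim\widetilde{Y}\leq m$, and everything else is unchanged. Your reconstruction faithfully reproduces the base change step, the use of Lemma \ref{tecnico} to pass from the vanishing on fibers to the vanishing on $\widetilde{X}$, the invocation of Castelnuovo--de Franchis, and the Stein factorization of $(\tilde{h}',\tilde{f})$ inside $\widetilde{Z}\times\widetilde{B}$. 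You also correctly identify and handle the only non-routine point, namely that when $k=m$ one must still show $\dim\widetilde{Y}\leq m$, and your argument (the vanishing of the $m$-wedge on fibers forces $\tilde{h}'|_{\widetilde{F}}$ to be non-dominant) is the right one. One small remark: your integer $k$, defined as the largest size of \emph{some} non-degenerate tuple in $V$, is not a priori the integer at which Catanese's theorem (as stated in the paper) applies, since that statement asks that \emph{no} $k$-tuple wedge to zero; the clean way to phrase this is to take $k$ to be the generic rank of the subsheaf of $\Omega^1_{\widetilde{X}}$ generated by $V$, which is what the cited Ran/Catanese machinery actually controls. This does not affect your dimension bound, since in every case the CdF target has dimension at most $m$ and the Stein factor of $(\tilde h',\tilde f)$ therefore has dimension at most $m$, but it tidies up the invocation of the theorem.
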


\begin{rmk}
	\label{algebrico}
	As highlighted in the proof of Theorem \ref{castel}, the monodromy group $G_\mL=\pi_1(B,b)/H_\mL$ may not be finite; in this case $ \widetilde{X}, \widetilde{Y},\widetilde{B}$ are not algebraic varieties but only complex spaces. On the other hand note that the fibers $\widetilde{F}$ are algebraic and so are their images $\tilde{h}(\widetilde{F})$ (by \cite[Theorem 5.6]{RZ4} they are actually varieties of general type) and the holomorphic map $ \tilde{h}_{|\widetilde{F}}$ is a morphism in the algebraic category. In particular $T_{\widetilde{X}/\widetilde{Y}}$ is an algebraically integrable foliation.
	
	If the monodromy group $G_\mL=\pi_1(B,b)/H_\mL$ is finite then everything is in the algebraic category.
\end{rmk}

\begin{rmk} Note that if we call as before $\phi \colon X_U\to \mC^l$ the map constant on the leaves of the foliation induced by the elements of the vector space $V$ given in Definition \ref{deflocsys}, we have in principle that only $\pi_1(\widetilde{X})$ acts on the image of $\phi$. Furthermore this action is properly discontinuous.

 In fact since $V$ is not necessarily closed under the monodromy action, it follows that if $\sigma\in V$ it may happen that $g(\sigma)\notin V$ for some $g\in \pi_1(B)$. So even if it is true that $\sigma=dF$ and $g(\sigma)=dH$ on the universal covering $X_U$, $g(\sigma)$ is not necessarily zero on the foliation and $H$ is not necessarily constant on the leaves. Hence (\ref{action}) becomes 
\begin{equation}
	\label{noazione}
	F(\gamma x)=H(x)+c_\gamma
\end{equation} since $d (F\circ\gamma) =d (\gamma^* F)=\gamma^*d  F=g(\sigma)=dH$.
Equation (\ref{noazione}) does not define an action on  the image of $\phi$ for an element $\gamma\in \pi_1({X})$ corresponding to $g$ via $\pi_1(X)\to \pi_1(B)$. On the other hand if $\gamma'\in \pi_1(\widetilde{X})< \pi_1({X})$ corresponds to $g'\in \pi_1(\widetilde{B})$ then the action of $\gamma'$ is properly discontinuous as in the usual case. See Figure \ref{fig1}.
\end{rmk}
\begin{figure}[h!]
\includegraphics[scale=.9]{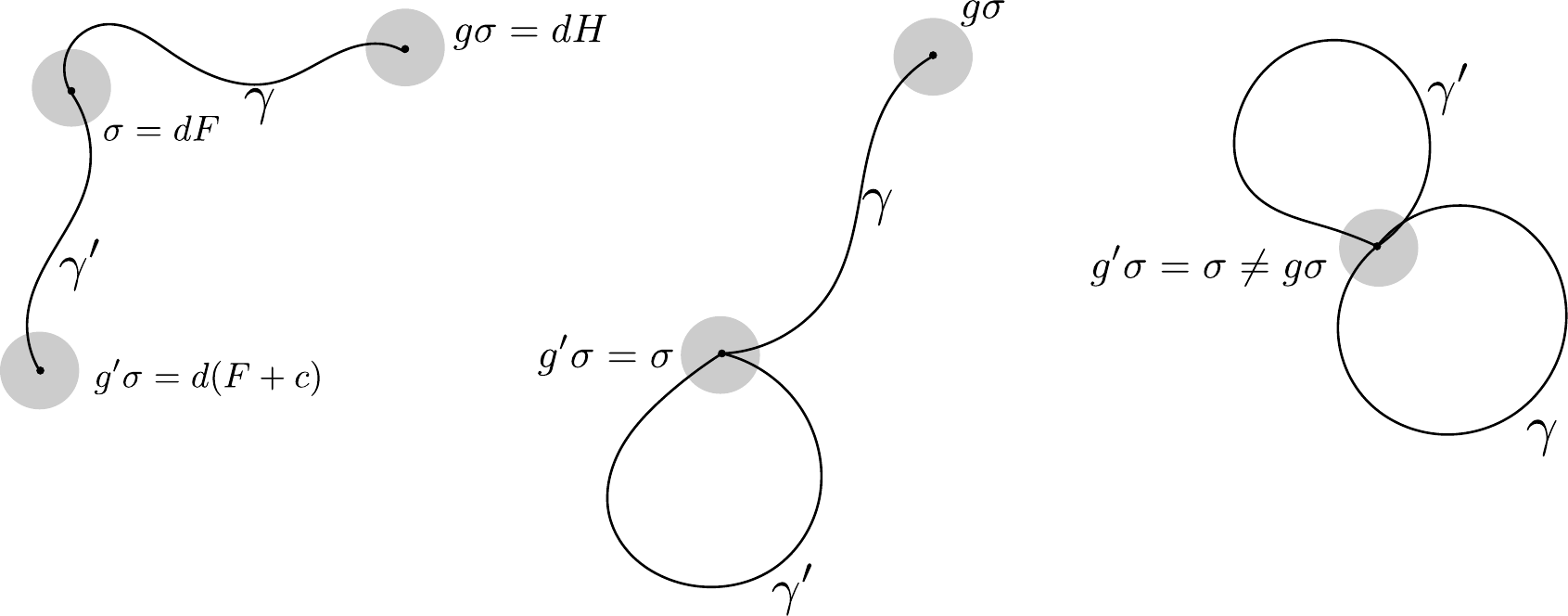}
\caption{The paths $\gamma$ and $\gamma'$ on, from left to right, $X_U$, $\widetilde{X}$, and $X$.}	\label{fig1}
\end{figure}

\begin{rmk} In the case where $\mL$ is of Castelnuovo type on the other hand this does not happen and we have an action of $\pi_1(X)$ on $\phi(X_U)$, but not necessarily properly discontinuous. Indeed, if $\sigma\in \Gamma(A,\mL)$ is a local section,  $g(\sigma)$ vanishes on the foliation and $H$ is constant on the leaves. So if the element $\gamma'$
 is in the fundamental group  $\pi_1(\widetilde{X})< \pi_1({X})$ then the actions is the same as in (\ref{action}) and it is properly discontinuous. On the other hand, for $\gamma$ not in $\pi_1(\widetilde{X})$, we can only say that it is given by an affinity of $\mC^l$, not necessarily a translation, and it may be not properly discontinuous (this affinity is determined by the matrix of the monodromy action of $g$ on the stalk of the local system $\mL$). 
 \end{rmk}
 
In the case where $\mL$ is of Castelnuovo type we can refine Theorem \ref{castel} as follows.
\begin{cor}
	\label{castel1}
	Let $f\colon X\to B$ be a semistable fibration and $\mL\leq \mD_X$ a local system of Castelnuovo-type of order $m$. Then up to a base change $\tilde{f}\colon \widetilde{X}\to \widetilde{B}$ there exist a normal $m$-dimensional complex space $\widetilde{Y}$ and holomorphic maps $\tilde{g}$ and $\tilde{h}$ as in Theorem \ref{castel} such that $\tilde{f}=\tilde{h}\circ \tilde{g}$. Furthermore if  $\sF_{\mL}$ is the foliation associated to $\mL$ then it is algebraically integrable.
\end{cor}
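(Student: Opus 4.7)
My plan for the first assertion is to apply Theorem \ref{castel} directly, after noting that being of Castelnuovo-type of order $m$ is a sheaf-theoretic strengthening of being Castelnuovo generated of order $m$. Concretely, fix a contractible open set $A\subset B$ and set $V=\Gamma(A,\mL)$; then $\dim V=\rank \mL\geq m+1$, the sheaf-theoretic wedge hypotheses on $\bigwedge^{m}\mL\to \mD_{X}^{m}$ and $\bigwedge^{m-1}\mL\to \mD_{X}^{m-1}$ restrict to the analogous conditions on $\bigwedge^{m} V$ and $\bigwedge^{m-1} V$, and $V$ trivially generates $\mL$ under the monodromy action because it is already the entire fiber. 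Theorem \ref{castel} then produces the data $\widetilde{B}, \widetilde{X}, \widetilde{Y}, \tilde{g}, \tilde{h}$ with $\tilde{f}=\tilde{h}\circ \tilde{g}$ and $\dim \widetilde{Y}=m$.

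For the algebraic integrability of $\sF_{\mL}$, my plan is to identify, after pulling back along the \'etale base-change map $\pi\colon \widetilde{X}\to X$, the foliation $\sF_{\mL}$ with the relative tangent sheaf $T_{\widetilde{X}/\widetilde{Y}}$. From the proof of Theorem \ref{castel}, the trivialization of $\mL$ over $\widetilde{B}$ lifts to global closed $1$-forms $\omega_{1},\dots,\omega_{k}$ on $\widetilde{X}$ and, by the Castelnuovo-de Franchis construction, $\omega_{i}=\tilde{h}^{*}\eta_{i}$ for suitable $1$-forms $\eta_{i}$ on $\widetilde{Y}$. On the one hand this forces every vector field in $T_{\widetilde{X}/\widetilde{Y}}$ to annihilate each $\omega_{i}$. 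On the other hand, the non-degeneracy condition that $\bigwedge^{m-1}\mL \to \mD_{X}^{m-1}$ is injective on decomposables implies that the $\eta_{i}$ generically span $\Omega^{1}_{\widetilde{Y}}$, so any $\tilde{f}$-vertical vector field on $\widetilde{X}$ annihilating all the $\omega_{i}$ lies in $T_{\widetilde{X}/\widetilde{Y}}$ on a dense open subset. Saturating then yields the equality $\pi^{-1}\sF_{\mL}=T_{\widetilde{X}/\widetilde{Y}}$ inside $T_{\widetilde{X}/\widetilde{B}}$.

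To transport this to $X$: any leaf $L$ of $\sF_{\mL}$ is contained in a fiber $X_{b}$ of $f$, and $\pi^{-1}(L)$ is a disjoint union of leaves of $T_{\widetilde{X}/\widetilde{Y}}$, each contained in a fiber $\widetilde{X}_{\tilde{b}}$ of $\tilde{f}$ with $\tilde{b}$ over $b$. By Remark \ref{algebrico} the restriction $\tilde{h}|_{\widetilde{X}_{\tilde{b}}}$ is a morphism of algebraic varieties, so its fibers are algebraic subvarieties of $\widetilde{X}_{\tilde{b}}$. Since the \'etale base change gives isomorphisms $\widetilde{X}_{\tilde{b}}\xrightarrow{\sim} X_{b}$, these fibers map isomorphically onto the leaves of $\sF_{\mL}$ in $X_{b}$, which are therefore algebraic subvarieties of $X$. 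Hence $\sF_{\mL}$ is algebraically integrable.

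The step I expect to be the main obstacle is identifying $\pi^{-1}\sF_{\mL}$ with $T_{\widetilde{X}/\widetilde{Y}}$: one must carefully trace the intrinsic definition of $\sF_{\mL}$ from Subsection \ref{localtofoliation} (as the saturation of vector fields in $T_{X/B}$ that annihilate the image sheaf $\sS_{f}$) through the base change, verify that the pullback of $\sS_{f}$ to $\widetilde{X}$ is generated, on suitable open sets, by the global forms $\omega_{i}$, and check that the saturation operation commutes with the \'etale pullback $\pi$. The potential non-finiteness of the covering $\widetilde{B}\to B$ means that one has to work in the analytic category at this stage and only recover the algebraicity fiberwise via Remark \ref{algebrico}; once these formalities are in place, the rest of the argument is routine.
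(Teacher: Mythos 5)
Your proposal is correct and follows essentially the same route as the paper: the existence part is exactly the observation that Castelnuovo-type implies Castelnuovo generated so Theorem \ref{castel} applies, and the integrability part is the paper's identification $p^*\sF_{\mL}=T_{\widetilde{X}/\widetilde{Y}}$ (the paper deduces the equality from one inclusion plus equality of the ranks $n-m$, while you verify both inclusions directly via the forms) followed by the fiberwise algebraicity of $\tilde{h}$ from Remark \ref{algebrico}. The only small imprecision is your claim that the $\eta_i$ generically span $\Omega^1_{\widetilde{Y}}$, which need not hold in the second case of the proof of Theorem \ref{castel} (there they only span the $(m-1)$-dimensional $\widetilde{Z}$-directions of the Stein factorization), but your restriction to $\tilde{f}$-vertical vector fields, which also annihilate the $\widetilde{B}$-direction, closes this gap.
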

\begin{proof}
	The existence of $\widetilde{Y}$ follows by Theorem \ref{castel} because of course Castelnuovo type implies Castelnuovo generated. Hence we concentrate on the algebraic integrability of  $\sF_{\mL}$.
	
If we denote by  $p\colon\widetilde{X}\to X$  the covering then we have the inclusion $T_{\widetilde{X}/\widetilde{Y}}\subseteq p^*\sF_{\mL}$ on an open subset. Furthermore in this setting they are foliations of the same rank $n-m$. Hence it is not difficult to see that  $T_{\widetilde{X}/\widetilde{Y}}=p^*\sF_{\mL}$.
By Remark \ref{algebrico}, $T_{\widetilde{X}/\widetilde{Y}}$ is algebraically integrable, hence $\sF_{\mL}$  is also algebraically integrable. 

Note that this argument is not true in the setting of Theorem \ref{castel} where $T_{\widetilde{X}/\widetilde{Y}}$ is the foliation given by $V$ and hence $p^*\sF_{\mL}$ can be in principle of rank strictly smaller than $T_{\widetilde{X}/\widetilde{Y}}$. 
\end{proof}

By these results, a local system $\mL$ of Castelnuovo-type makes it possible to reconstruct an intermediate variety up to a covering of the base $B$. On the other hand by Lemma \ref{lasic} we know that also algebraically integrable foliations give a factorization of $f$ up to birational morphism, as in Diagram \ref{diagfol}.
Hence we now correlate this two approaches.

\begin{thm}
	\label{dxsx}
Let $f\colon X\to B$ be a semistable fibration and $\mL$ a Castelnuovo-type local system. Then $\mL$ gives normal variety $Y'$ and complex spaces $\widetilde{Y}$, and $\widehat{Y}$, all of the same dimension, which fit into the following diagram
\begin{equation}
	\label{diagdxsx}
	\xymatrix{
		\widetilde{X}\ar[d]\ar[r]& X\ar[dd]& X'\ar[d]\ar[l]& \widehat{X}\ar[l]\ar[d]\ar@/_1.5pc/[lll]\\
		\widetilde{Y}\ar[d]& & Y'\ar[d]& \widehat{Y}\ar[d]\ar@{-->}[l]\ar@{-->}@/_1.5pc/[lll]\\
		\widetilde{B}\ar[r]& B& B\ar@{=}[l]& \widetilde{B}\ar[l]\ar@{=}@/^1.3pc/[lll]
	}
\end{equation}
\newline
where $\widetilde{B}\to B$ is a covering, $\widehat{X}\to \widetilde{X}$ is a (generically) degree 1 map, $\widehat{Y}\dashrightarrow Y'$ is a dominant meromorphic map and $\widehat{Y}\dashrightarrow \widetilde{Y}$ is a bimeromorphic map.
\end{thm}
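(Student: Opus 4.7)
The plan is to construct the three columns of the displayed diagram separately and then to compare them.

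First, for the left column I invoke Corollary \ref{castel1}: since $\mL$ is of Castelnuovo-type of order $m$, this produces at once the covering $\widetilde{B}\to B$, the base change $\tilde{f}\colon \widetilde{X}\to\widetilde{B}$, the $m$-dimensional normal complex space $\widetilde{Y}$ and the holomorphic maps $\tilde{h}\colon\widetilde{X}\to\widetilde{Y}$ and $\tilde{g}\colon\widetilde{Y}\to\widetilde{B}$. Crucially, the same corollary also shows that the associated foliation $\sF_{\mL}$ on $X$ is algebraically integrable and that $T_{\widetilde{X}/\widetilde{Y}}=p^{*}\sF_{\mL}$, where $p\colon\widetilde{X}\to X$ is the natural projection. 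For the middle column I apply Lemma \ref{lasic} to the algebraically integrable foliation $\sF_{\mL}\subseteq T_{X/B}$ on $X$: this yields an irreducible closed subvariety $W\subset\mathrm{Chow}(X)$ and the universal family $U\to W$ together with a birational morphism $e\colon U\to X$. I set $X':=U$ and $Y':=W$ (normalizing if needed); because $\sF_{\mL}\subseteq T_{X/B}$, the general leaf sits inside a fiber of $f$, so the Chow parameter space $Y'$ inherits a morphism $Y'\to B$, producing the factorization $X'\to Y'\to B$ of $X'\to X\xrightarrow{f}B$.

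For the right column I define $\widehat{X}$ and $\widehat{Y}$ as (the main component of the normalization of) the fiber products $X'\times_{B}\widetilde{B}$ and $Y'\times_{B}\widetilde{B}$, respectively, with induced maps $\widehat{X}\to\widehat{Y}\to\widetilde{B}$ coming from $X'\to Y'\to B$. Projection to the first factor provides the holomorphic map $\widehat{X}\to X'$ and the dominant meromorphic map $\widehat{Y}\dashrightarrow Y'$; the latter is only meromorphic because, by Remark \ref{algebrico}, if the covering $\widetilde{B}\to B$ is not finite then $\widehat{Y}$ is only a complex space, so the relevant identifications hold only on a Zariski-dense open subset. Since $e\colon X'\to X$ is birational, the induced map $\widehat{X}\to X\times_{B}\widetilde{B}=\widetilde{X}$ is generically of degree one, and the curved arrow $\widehat{X}\to X$ of the diagram is the composition $\widehat{X}\to X'\xrightarrow{e} X$.

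The subtle point is the bimeromorphic map $\widehat{Y}\dashrightarrow\widetilde{Y}$. By the first step, the closures of the leaves of $\sF_{\mL}$ pulled back to $\widetilde{X}$ are exactly the generic fibers of $\tilde{h}$; by the second step the closures of the very same leaves, pulled back to $\widehat{X}$, are the generic fibers of $\widehat{X}\to\widehat{Y}$. Since $\widehat{X}\to\widetilde{X}$ is generically of degree one, the general fibers of $\widehat{X}\to\widehat{Y}$ and of the composition $\widehat{X}\to\widetilde{X}\to\widetilde{Y}$ coincide on a Zariski-dense open subset, and the uniqueness of the Chow parameter space (applied after restricting to such an open subset) produces the desired bimeromorphic map $\widehat{Y}\dashrightarrow\widetilde{Y}$ making the whole diagram commute. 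The main obstacle is precisely controlling this last identification in the non-algebraic setting: when $\widetilde{B}\to B$ is an infinite cover, $\widetilde{X},\widetilde{Y},\widehat{X},\widehat{Y}$ are only complex spaces, so the classical uniqueness of the Chow variety must be replaced by its bimeromorphic analogue, relying on the fibrewise algebraicity recorded in Remark \ref{algebrico}.
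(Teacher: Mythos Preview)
Your overall architecture is right—build the left column via Corollary \ref{castel1}, the middle column via Lemma \ref{lasic}, and then compare—but your construction of $\widehat{Y}$ differs from the paper's, and this difference propagates through the rest of the argument.

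The paper does \emph{not} take $\widehat{Y}$ to be the fiber product $Y'\times_B\widetilde{B}$. Instead it applies Theorem \ref{castel} a second time, now to the birational model $X'$ (with the same local system $\mL$), obtaining $\widehat{X}=X'\times_B\widetilde{B}$ and a normal complex space $\widehat{Y}$ through the Castelnuovo--de Franchis construction. This choice makes the bimeromorphic map $\widehat{Y}\dashrightarrow\widetilde{Y}$ essentially automatic: both spaces arise from the same procedure applied to inputs related by a generically degree-one map, so no appeal to Chow uniqueness in the analytic category is needed. Conversely, in the paper the map $\widehat{Y}\dashrightarrow Y'$ is genuinely only meromorphic because it is produced by the quotient of a (possibly non-properly-discontinuous) $\pi_1(X)$-action, as remarked just after the proof.

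Your fiber-product definition of $\widehat{Y}$ is a legitimate alternative and the resulting complex space is bimeromorphic to the paper's, so the theorem as stated still follows. But two points in your write-up should be fixed. First, with your definition the projection $\widehat{Y}\to Y'$ is holomorphic, full stop; your justification that it is ``only meromorphic because $\widehat{Y}$ is only a complex space'' is not correct (fiber-product projections are always morphisms). Second, for $\widehat{Y}\dashrightarrow\widetilde{Y}$ you do not need the Chow-variety machinery: since the general fibers of $\widehat{X}\to\widehat{Y}$ and of $\widehat{X}\to\widetilde{X}\to\widetilde{Y}$ are both closures of leaves of the pulled-back foliation and coincide over a dense open, a direct rigidity/connected-fibers argument suffices. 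The paper's route avoids this step entirely by using the same construction twice.
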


\begin{proof}
Denote $\sF:=\sF_\mL$ the foliation associated to $\mL$.
Applying Corollary \ref{castel1} we know that $\sF$ is algebraically integrable and that we have a covering $\widetilde{B}\to B$ of $B$ and a normal complex space $\widetilde{Y}$ which factors the base change of $\widetilde{X}\to \widetilde{B}$.

We can also apply Lemma \ref{lasic} to the foliation $\sF$ and this will give a birational morphism $X'\to X$ and a variety $Y'$ which factors the fibration $X'\to B$. 

That is, putting these two together we have the diagram 
\begin{equation}
	\label{diagdd}
\xymatrix{
\widetilde{X}\ar[d]\ar[r]& X\ar[dd]& X'\ar[d]\ar[l]\\
\widetilde{Y}\ar[d]& & Y'\ar[d]\\
\widetilde{B}\ar[r]& B& B\ar@{=}[l]
}
\end{equation}

Now apply again Theorem \ref{castel} to the same local system $\mL$ but this time looking at the right side of Diagram (\ref{diagdd}). We find again our covering $\widetilde{B}\to B$ and the pullback $\widehat{X}\to \widetilde{B}$ factors via a normal complex space $\widehat{Y}$. 
\begin{equation}
\xymatrix{
\widetilde{X}\ar[d]\ar[r]& X\ar[dd]& X'\ar[d]\ar[l]& \widehat{X}\ar[d]\ar[l]\\
\widetilde{Y}\ar[d]& & Y'\ar[d]& \widehat{Y}\ar[d]\\
\widetilde{B}\ar[r]& B& B\ar@{=}[l]& \widetilde{B}\ar[l]
}
\end{equation}

Noticing that $\widetilde{X}=X\times_B\widetilde{B}$ and $\widehat{X}=X'\times_B\widetilde{B}$, from the birational morphism $X'\to X$ we immediately obtain a generically degree 1 morphism $\widehat{X}\to \widetilde{X}$. Finally since both $\widehat{Y}$ and $\widetilde{Y}$ are obtained by the application of  Theorem \ref{castel}, we also have a bimeromorphic map $\widehat{Y}\dashrightarrow \widetilde{Y}$. On the other hand it is clear that we have just a meromorphic map, not necessarily holomorphic, between $\widehat{Y}$ and $Y'$. The following diagram sums up the situation and gives the final result.
\begin{equation}
\xymatrix{
\widetilde{X}\ar[d]\ar[r]& X\ar[dd]& X'\ar[d]\ar[l]& \widehat{X}\ar[l]\ar[d]\ar@/_1.5pc/[lll]\\
\widetilde{Y}\ar[d]& & Y'\ar[d]& \widehat{Y}\ar[d]\ar@{-->}[l]\ar@{-->}@/_1.5pc/[lll]\\
\widetilde{B}\ar[r]& B& B\ar@{=}[l]& \widetilde{B}\ar[l]\ar@{=}@/^1.3pc/[lll]
}
\end{equation}
\end{proof}

Using the same notation as before, note that the map  $\widehat{Y}\dashrightarrow Y'$ is basically given by the quotient of the non properly discontinuous action of $\pi_1(X)$ on $\phi(X_U)$.

Motivated by this result we give the following definition
\begin{defn}\label{casttypef}
	We say that an algebraically integrable foliation $\sF$ of rank $n-m$ on $X$ is of Castelnuovo-type if the associated local system $\mL_\sF$ is of Castelnuovo-type of order $m$.
\end{defn}

\subsection{$s$-Towers}

We have seen that a 2-Tower $X\to Y\to B$ gives an inclusion of local systems on $B$, $\mD_Y\leq\mD_X$. Viceversa Theorem \ref{castel} and Corollary \ref{castel1} say that an inclusion of local systems $\mL\leq\mD_X$ gives, under suitable hypothesis on $\mL$, an intermediate variety $\widetilde{Y}$ which factors the pullback of the fibration $\widetilde{X}\to \widetilde{B}$.

In the same way it is clear that given an $s$-Tower 
\begin{equation}
X\to Y_1\to Y_2\to \dots \to Y_s\to B
\end{equation} we have a flag of local subsystems on $B$
\begin{equation}
\mD_{Y_s}\leq\dots\leq\mD_{Y_2}\leq\mD_{Y_1}\leq\mD_{X}
\end{equation}
The following result gives the viceversa
\begin{cor}
	\label{flag}
Let $X\to B$ be a semistable fibration and $\mL_{s}<\dots<\mL_{2}<\mL_{1}<\mD_{X}$ a flag of local systems on $B$ such that 
\begin{equation}
\bigwedge^{m_i}\mL_i\to \mD_X^{m_i}
\end{equation} is zero for every $i$ while the quotient $\mL_i/\mL_{i+1}$ has rank at least $m_i$ and every $m_i-1$-uple of sections in $\mL_i\setminus\mL_{i+1}$ does not wedge to zero in $\mD_X^{m_i-1}$. Then there exists a covering $\widetilde{B}\to B$ and a factorization of the pullback $\widetilde{X}\to\widetilde{B}$ as
\begin{equation}
\xymatrix{
&\widetilde{X}\ar[ld]\ar[dd]\ar[ddddrr]&&\\
\widetilde{Y_1}\ar@{-->}[rd]\ar[ddddrr]&&&\\
&\widetilde{Y_2}\ar@{-->}[rd]\ar[dddr]&&\\
&&\ddots\ar@{-->}[rd]&\\
&&&\widetilde{Y_s}\ar[dl]\\
&&\widetilde{B}&
}
\end{equation}
where the $\widetilde{Y_i}$ are normal complex spaces with meromorphic maps between them.
\end{cor}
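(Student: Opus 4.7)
The plan is to iterate the construction of Theorem \ref{castel} along the flag, using one common base change to guarantee compatibility. First I would take $\widetilde{B}\to B$ to be the covering classified by $H:=\ker\rho_{\mL_1}$, where $\rho_{\mL_1}$ is the monodromy representation of the largest element of the flag. Since each $\mL_i$ is a sub--local system of $\mL_1$, the action of $H$ on every $\mL_i$ is also trivial, so this single covering simultaneously trivialises the monodromies of all the $\mL_i$ appearing in the flag. Writing $\widetilde{X}=X\times_B\widetilde{B}$ and $\tilde f\colon\widetilde{X}\to\widetilde{B}$ for the base change, the sections of each $\mL_i$ lift to global closed $1$-forms on $\widetilde X$, using the splitting of Sequence (\ref{dx1}) recalled in the proof of Theorem \ref{castel}.

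Next, I would apply Theorem \ref{castel} (or rather the portion of its proof invoking Lemma \ref{tecnico} and the generalised Castelnuovo--de Franchis theorem \cite[Theorem 1.14]{Ca2}) to each $\mL_i$ individually on $\widetilde{B}$. The hypothesis that $\bigwedge^{m_i}\mL_i\to\mD_X^{m_i}$ vanishes, combined with Lemma \ref{tecnico}, forces any $m_i+1$ liftings of sections of $\mL_i$ to wedge to zero on $\widetilde X$, so Castelnuovo--de Franchis produces a factorisation $\widetilde X\to\widetilde{Y_i}\to\widetilde B$ through a normal complex space $\widetilde{Y_i}$ of dimension at most $m_i$. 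The complementary assumption that no $(m_i-1)$-tuple of sections in $\mL_i\setminus\mL_{i+1}$ wedges to zero in $\mD_X^{m_i-1}$, together with the rank bound $\rank(\mL_i/\mL_{i+1})\geq m_i$, ensures that the ``new'' sections of $\mL_i$ not pulled back from $\mL_{i+1}$ strictly increase the generic rank of the wedge, so $\dim\widetilde{Y_i}>\dim\widetilde{Y_{i+1}}$ along the flag.

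The connecting maps $\widetilde{Y_i}\dashrightarrow\widetilde{Y_{i+1}}$ arise from the flag inclusion $\mL_{i+1}\subseteq\mL_i$: the closed $1$-forms on $\widetilde X$ obtained from $\mL_{i+1}$ are a subset of those obtained from $\mL_i$, so the foliation on $\widetilde X$ defined by the $\mL_{i+1}$-liftings is coarser than the one defined by the $\mL_i$-liftings, and a general fiber of $\widetilde X\to\widetilde{Y_i}$ is contained in a general fiber of $\widetilde X\to\widetilde{Y_{i+1}}$. This yields the required dominant meromorphic maps $\widetilde{Y_i}\dashrightarrow\widetilde{Y_{i+1}}$ and makes the tower commute. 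The main obstacle is precisely the compatibility between the independently constructed $\widetilde{Y_i}$: Theorem \ref{castel} determines each target only up to the Stein--factorisation ambiguity in the second case of its proof, so one has to be careful that the induced meromorphic maps really assemble into a chain. I would handle this by working inductively from the bottom of the flag, first constructing $\widetilde{Y_s}$ and then, at each stage, applying the Castelnuovo--de Franchis construction to the liftings of $\mL_i$ factored through the already-built $\widetilde{Y_{i+1}}$ rather than through $\widetilde B$; in this way the map $\widetilde{Y_i}\dashrightarrow\widetilde{Y_{i+1}}$ is tautological by construction, and the whole tower of normal complex spaces with meromorphic connecting maps is obtained.
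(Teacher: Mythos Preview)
Your proposal is correct and follows essentially the same route as the paper: take the covering $\widetilde B\to B$ trivialising $\mL_1$ (hence all $\mL_i$), apply Theorem~\ref{castel} to each $\mL_i$ separately to obtain the $\widetilde{Y_i}$, and then read off the meromorphic maps $\widetilde{Y_i}\dashrightarrow\widetilde{Y_{i+1}}$ from the inclusion $\mL_{i+1}\subset\mL_i$, which forces $T_{\widetilde X/\widetilde{Y_i}}\subset T_{\widetilde X/\widetilde{Y_{i+1}}}$ over the smooth locus and hence makes fibers of $\widetilde X\to\widetilde{Y_i}$ sit inside fibers of $\widetilde X\to\widetilde{Y_{i+1}}$.

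The only difference is that your final paragraph over-engineers the compatibility step. The fiber-containment argument you give in the middle of that paragraph is already exactly what the paper does and is sufficient: it defines a morphism from the smooth locus of $\widetilde{Y_i}$ to $\widetilde{Y_{i+1}}$, i.e.\ a meromorphic map, and commutativity of the tower is automatic since all maps are induced by the same $\widetilde X$. The inductive ``build from the bottom and factor through $\widetilde{Y_{i+1}}$'' refinement is not needed.
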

\begin{proof}
We consider the covering $\widetilde{B}\to B$ which trivializes $\mL_1$, hence all the $\mL_i$. By the hypothesis on the local systems $\mL_i$ we can easily find using Theorem \ref{castel} spaces $\widetilde{Y_i}$ such that 
$\widetilde{X}\to\widetilde{B}$ factors as 
$\widetilde{X}\to\widetilde{Y_i}\to\widetilde{B}$.

As the last step we note that on the smooth part of every $\widetilde{Y_i}$ we can define a morphism to $\widetilde{Y}_{i+1}$. This comes from the fact that $\mL_{i+i}<\mL_i$, hence over smooth points of $\widetilde{Y_i}$ we have that $T_{\widetilde{X}/\widetilde{Y}_{i}}\subset T_{\widetilde{X}/\widetilde{Y}_{i+1}}$, hence in this open set every fiber of $\widetilde{X}\to \widetilde{Y_i}$  is contained in a fiber of $\widetilde{X}\to\widetilde{Y}_{i+1}$.
\end{proof}

\section{Foliations-Local systems: on the fixed points of the correspondence.}
This section is dedicated to the study of the fixed points of the correspondence highlighted in (\ref{corrisponde}). We show that foliations and local systems of Castelnuovo type are indeed fixed points.

\begin{thm}
	\label{fissi}
	Let $\mL\leq \mD_X$ and $\sF\subseteq T_{X/B}$ be respectively a local system and an algebraically integrable foliation of Castelnuovo type of order $m$.
	Furthermore assume that $\mL$ is maximal with the property that  $\bigwedge^{m} \mL\to  \mD_X^m$ is zero. Then they are fixed points of the correspondence, that is $\alpha(\beta(\mL))=\mL$ and $\beta(\alpha(\sF))=\sF$. 
\end{thm}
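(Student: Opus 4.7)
My plan is to verify the two fixed-point identities separately. In both cases one direction of containment is tautological from the constructions of $\alpha$ and $\beta$, and the reverse direction comes from the maximality assumption (for $\mL$) or from a rank comparison using saturation (for $\sF$).

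\textbf{The identity $\alpha(\beta(\mL))=\mL$.} First I would form $\sF_{\mL}=\beta(\mL)$; by Corollary \ref{castel1} this is an algebraically integrable foliation of rank $n-m$. Applying Proposition \ref{stesso} to $\sF_{\mL}$ directly on $X$ (no base change needed) yields a diagram as in (\ref{diagfol}) with a smooth $W$ of dimension $m$ such that $\mL_{\sF_{\mL}}=\mD_{W}$. The inclusion $\mL\subseteq\mL_{\sF_{\mL}}$ is then tautological: a local lift $s\in\sS\subseteq f_*\Omega^1_{X,d}$ of a section of $\mL$ gives a $1$-form in $\sS_f\subseteq\Omega^1_X$ which is annihilated by $\sF_{\mL}$ by the very definition of $\beta$ in (\ref{defF}); hence $s$ lies in $f_*\sK_d^\vee$, where $\sK^\vee=(T_X/\sF_{\mL})^\vee$ is the sheaf used to build $\alpha(\sF_{\mL})$, and its class lies in $\mL_{\sF_{\mL}}$.

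For the reverse inclusion I would use maximality. Since $g\colon W\to B$ has relative dimension $m-1$, one has $\Omega^m_{W/B}=0$ and therefore $\mD_W^m=0$; consequently the wedge map $\bigwedge^m\mL_{\sF_{\mL}}\to\mD_X^m$ factors through $\mD_W^m=0$ and hence vanishes. Thus $\mL_{\sF_{\mL}}$ is a local subsystem of $\mD_X$ containing $\mL$ and satisfying the vanishing of the $m$-th wedge; by maximality of $\mL$ it must coincide with $\mL$.

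\textbf{The identity $\beta(\alpha(\sF))=\sF$.} By Definition \ref{casttypef}, the hypothesis on $\sF$ says that $\mL_{\sF}$ is of Castelnuovo-type of order $m$, so Corollary \ref{castel1} applied to $\mL_{\sF}$ shows that $\sF_{\mL_{\sF}}$ is algebraically integrable of rank $n-m$, which is precisely the rank of $\sF$. The inclusion $\sF\subseteq\sF_{\mL_{\sF}}$ is immediate from (\ref{defF}): lifted sections of $\mL_{\sF}$ are local sections of $f_*\sK_d^\vee$ with $\sK^\vee=(T_X/\sF)^\vee$, hence annihilated by contraction with any vector field in $\sF$. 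Since both $\sF$ and $\sF_{\mL_{\sF}}$ are saturated subsheaves of $T_X$ of the same rank $n-m$, the quotient $\sF_{\mL_{\sF}}/\sF$ is a torsion subsheaf of the torsion-free sheaf $T_X/\sF$, hence zero, giving $\sF=\sF_{\mL_{\sF}}$.

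The step I expect to be the main obstacle is the dimension bookkeeping in the first identity: one must pass from the base-change picture of Corollary \ref{castel1} to the birational picture of Proposition \ref{stesso} in order to identify $\mL_{\sF_{\mL}}$ with $\mD_W$ on $B$ itself, and then read off the vanishing of $\bigwedge^m\mL_{\sF_{\mL}}\to\mD_X^m$ from $\Omega^m_{W/B}=0$. Once this identification is secured, both fixed-point identities follow cleanly from the tautological containments together with maximality on one side and the saturation-plus-rank argument on the other.
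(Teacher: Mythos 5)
Your proof is correct and follows essentially the same route as the paper's: algebraic integrability of $\beta(\mL)$ via Corollary \ref{castel1}, identification $\alpha(\beta(\mL))=\mD_W$ via Proposition \ref{stesso}, the tautological inclusion $\mL\leq\mD_W$, and maximality in one direction, plus the rank-and-saturation comparison for the foliation in the other. You make explicit two small steps the paper leaves implicit — that $\bigwedge^m\mD_W\to\mD_X^m$ vanishes because $\dim W=m$, and the torsion-free-quotient argument yielding equality of saturated subsheaves of equal rank — which is a welcome clarification but not a different approach.
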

\begin{proof}
	We divide the proof in two parts, first for the local system and then for the foliation.
	
\textit{The local system.} 
Take $\mL$ as in the statement. First note that the foliation $\beta(\mL)$ is algebraically integrable by Corollary \ref{castel1}.

 Hence  $\beta(\mL)$ gives a diagram 
\begin{equation}
	\xymatrix{
		U\ar[d]_{e}\ar[r]^{\pi}&W\ar[ddl]\\
		X\ar[d]_{f}&\\
		B
	}
\end{equation} with $\dim W=m$, as in (\ref{diagfol}) and $\alpha(\beta(\mL))=\mD_W$ using the notation of Section \ref{sez3}. Hence by the maximality hypothesis it will be enough to show that $\mL\leq\mD_W$.
Now recall that $\mL$ and $\mD_W$ by definition fit into the exact sequences 
\begin{equation}
	\xymatrix{
		0\ar[r]&\omega_B\ar[r]&\tilde{f}_*\sL^\vee_{d}\ar[r]&\mD_{W}\ar[r]&0\\
		0\ar[r]&\omega_B\ar[r]\ar@{=}[u]&\sS\ar[r]&\mL\ar[r]&0
	}
\end{equation} $\tilde{f}=f\circ e$, see Diagrams (\ref{ll}) and (\ref{ss}). So our problem is reduced to showing that $\sS\subseteq \tilde{f}_*\sL^\vee_{d}\subseteq f_*\Omega_{X,d}^1$. It is not difficult to see that this is true since the foliation $\beta(\mL)$ is defined via (\ref{defF}), coincides with $T_{U/W}$ on an open set $X^0\subset X$ such that ${\rm{codim}}(X\setminus X_0)\geq 2$, and $\sL$ is 
\begin{equation}
	0\to T_{U/W}\to T_U\to \sL\to 0
\end{equation} see the Sequence (\ref{due}). 

\textit{The foliation.} Take $\sF$ as in the statement. Once again we know that by definition $\alpha(\sF)=\mD_{W}$ hence $\beta(\alpha(\sF))$ is obtained as in (\ref{defF}) with $\sS=\tilde{f}_*\sL^\vee_{d}$. It immediately follows that $\sF\subseteq \beta(\alpha(\sF))$ on a good open subset hence they coincide since they are foliations of the same rank $n-m$.
\end{proof}

\begin{rmk}
	Note that by the above Theorem \ref{fissi}, Theorem \ref{dxsx} can also be stated in terms of foliations; that is if $\sF$ is a foliation of Castelnuovo-type then it uniquely corresponds to the local system $\mL_{\sF}$ and they give the varieties $Y',\widetilde{Y},\widehat{Y}$ as in Diagram (\ref{diagdxsx}).
\end{rmk}

\begin{rmk}Note that viceversa it is not true that a local system $\mL$ with $\alpha(\beta(\mL))=\mL$, and minimal with this property, is of Castelnuovo type, or even Castelnuovo generated. In fact even assuming that $\sF_{\mL}$ is algebraically integrable, it follows that $\alpha(\beta(\mL))=\alpha(\sF_{\mL})=\mD_W$ with the same notation of the previous proof. Hence if we assume that $\mL$ is a fixed point then we certainly have $\mL=\mD_{W}$. Now denote by  $m$ the dimension of the variety $W$. Hence $\bigwedge^{m} \mL\to  \mD_X^m$ is zero by dimensional reasons but we can not be sure that $\bigwedge^{m-1}\mL\to  \mD_X^{m-1}$ is injective on decomposable elements. Of course there exists $V'<\Gamma(A,\mL)$ and an integer $m'<m$ such that	$\bigwedge^{m'} V'\to  \mD_X^m$ is zero and $\bigwedge^{m'-1} V'\to  \mD_X^{m-1}$ is injective on decomposable elements. But the local system generated by $V'$ might not be a fixed point because $\dim V'$  is not necessarily $\geq m'+1$ hence it is not of Castelnuovo type (not even Castelnuovo generated) and Theorem \ref{fissi} does not apply (the associated foliation  may be not algebraically integrable).
\end{rmk}

\begin{prob}
For the viceversa, find conditions that imply the algebraicity of $\sF_{\mL}$ at least for those $\mL$ such that $\alpha(\beta(\mL))=\mL$. We plan to study this problem in the next future.
\end{prob}

\section{MRC fibrations and Castelnuovo-type local systems}
\label{sez5}
In this section we use local systems and foliations side by side to study some problems on Harder-Narasimhan filtrations and rational connectedness.
We recall the key definitions and ideas.
Let $X$ be a smooth projective variety and $\alpha$ an ample class. Given a coherent torsion-free sheaf $\sE$ on X, the slope $\mu(\sE)$ with respect to the class $\alpha$ is defined as the ratio
	$$
	\mu(\sE)=\frac{\deg \sE}{\rank \sE}
	$$ where $\deg\sE=\sE\cdot \alpha^{n-1}.$

A torsion-free sheaf $\sE$ is said to be $\mu$–stable (respectively, $\mu$–semistable) if $\mu(\sF) <
\mu(\sE)$ (respectively, $\mu(\sF) \leq
\mu(\sE)$) for every proper coherent torsion-free subsheaf $\sF \subset \sE$.

We recall the well known Harder-Narasimhan filtration of $\sE$
\begin{thm}
Let X be a smooth projective variety and let $\sE$
be a torsion-free coherent sheaf of positive rank on $X$. Then there exists a unique
Harder-Narasimhan filtration, that is, a filtration $0= \sE_0 \subsetneq \sE_1 \subsetneq\dots\subsetneq \sE_r =\sE$ where
each quotient $\sQ_i := \sE_i/\sE_{i-1}$ is torsion-free, $\mu$-semistable, and where the sequence of
slopes $\mu(\sQ_i)$ is strictly decreasing.
\end{thm}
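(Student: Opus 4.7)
The plan is to construct the filtration by iteratively extracting a maximal destabilizing subsheaf. First I would establish the boundedness fact that the set
\[
\{\mu(\sF)\mid 0\neq \sF\subseteq \sE\text{ torsion-free subsheaf}\}
\]
is bounded above, so that $\mu_{\max}(\sE):=\sup_{\sF}\mu(\sF)$ is finite and moreover attained by some subsheaf. Boundedness follows from Grothendieck's boundedness theorem for subsheaves of a coherent sheaf on a polarized projective variety with bounded Hilbert polynomial; a standard Noetherian argument on the ascending chain of candidate destabilizers then shows that the supremum is actually realized.

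Next I would prove the sum-of-subsheaves lemma: if $\sF_1,\sF_2\subseteq \sE$ satisfy $\mu(\sF_i)\geq \mu_{\max}$ for $i=1,2$, then $\mu(\sF_1+\sF_2)\geq \mu_{\max}$ as well. The argument uses the exact sequence
\[
0\to \sF_1\cap \sF_2\to \sF_1\oplus \sF_2\to \sF_1+\sF_2\to 0,
\]
the additivity of $\deg$ and $\rank$ on such sequences modulo torsion, and the inequality $\mu(\sF_1\cap \sF_2)\leq \mu_{\max}$ coming from the very definition of $\mu_{\max}$. As a consequence, the subsheaves of $\sE$ of slope $\mu_{\max}$ form a directed family under inclusion and admit a unique maximal element $\sE_1$, which in particular has maximal rank among all subsheaves of slope $\mu_{\max}$. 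This $\sE_1$ is automatically saturated (its saturation has the same rank but possibly larger degree, giving slope $\geq \mu_{\max}$, which by maximality forces equality), so $\sE/\sE_1$ is torsion-free; and $\sE_1$ itself is $\mu$-semistable, since any subsheaf of strictly larger slope would contradict the definition of $\mu_{\max}$.

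I would then iterate: by induction on the rank of $\sE$, the torsion-free quotient $\sE/\sE_1$ admits a Harder-Narasimhan filtration, whose preimages in $\sE$, prepended with $\sE_1$, produce the desired filtration. To verify the strictly decreasing slope condition, suppose $\mu(\sQ_{i+1})\geq \mu(\sQ_i)$; then the seesaw inequality applied to $\sE_{i-1}\subseteq \sE_i\subseteq \sE_{i+1}$ yields $\mu(\sE_{i+1}/\sE_{i-1})\geq \mu(\sE_i/\sE_{i-1})$, contradicting the maximality of $\sE_i/\sE_{i-1}$ as the maximal destabilizing subsheaf of $\sE/\sE_{i-1}$. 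Uniqueness of the filtration follows in the same spirit: comparing two Harder-Narasimhan filtrations $\sE_\bullet$ and $\sE'_\bullet$, their first pieces must coincide by the uniqueness of the maximal destabilizer of $\sE$, and the remaining pieces then agree by induction applied to $\sE/\sE_1$.

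The main obstacle is the first step, namely the boundedness and attainment of $\mu_{\max}$. This is the only place where the smooth projective hypothesis on $X$ and the ampleness of $\alpha$ are used in an essential, non-formal way, via Grothendieck's boundedness; everything thereafter is a formal consequence of the additivity of degree and rank modulo torsion together with the uniqueness built into the definition of $\sE_1$.
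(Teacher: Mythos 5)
The paper does not actually prove this theorem; it simply recalls the Harder--Narasimhan filtration as a standard classical fact before using it to set up the filtration of $T_{X/B}$, so there is no ``paper's proof'' for your argument to be compared against. That said, your sketch is the canonical existence-and-uniqueness argument (as in, e.g., Huybrechts--Lehn) and it is correct in outline: boundedness of slopes of subsheaves via Grothendieck boundedness, the directedness lemma for the family of slope-$\mu_{\max}$ subsheaves via the short exact sequence $0\to\sF_1\cap\sF_2\to\sF_1\oplus\sF_2\to\sF_1+\sF_2\to 0$, extraction of the maximal destabilizer, saturation and induction on rank, and the seesaw inequality for strict monotonicity of slopes.

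One small place where you should be more explicit is the attainment of $\mu_{\max}$. The phrase ``a standard Noetherian argument on the ascending chain of candidate destabilizers'' is not quite the mechanism usually invoked: an ascending chain of subsheaves of increasing slope need not stabilize just by Noetherianity of $\sE$, because the chain need not be a chain under inclusion. The clean argument is that the rank takes only finitely many values $1,\dots,\rank\sE$; for each fixed rank $r$ the degrees of saturated rank-$r$ subsheaves lie in a discrete subset of $\mQ$ (multiples of $\alpha^{n-1}$-intersection numbers) that is bounded above by Grothendieck boundedness, so the supremum of $\mu$ over rank-$r$ subsheaves is attained; taking the maximum over the finitely many ranks gives attainment of $\mu_{\max}$. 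With that substitution the argument is complete.
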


We recall that a variety is rationally connected if every two general points can be connected by a rational curve. It is well known that if we consider the Harder-Narasimhan filtration of the tangent sheaf $T_X$,
$$
0= \sF_0 \subsetneq \sF_1 \subsetneq\dots\subsetneq \sF_r =T_X
$$ and we set
$$
i_{\max}=\max\{0<i<k\mid \mu(\sQ_i)>0\}\cup \{0\} 
$$
if we assume that $i_{\max} >0$ then for every index 
$0< i\leq i_{\max} $, $\sF_i$ is an algebraically integrable foliation with rationally connected general leaf. In particular we have the following diagram of rational maps 
\begin{equation}
	\label{hnmrc}
\xymatrix{
&X\ar@{-->}^{\pi_1}[ld]\ar@{-->}^{\pi_2}[dd]\ar@{-->}^{\pi_{i_{\max }}}[ddddrr]&&\\
W_1\ar@{-->}[rd]&&&\\
&W_2\ar@{-->}[rd]&&\\
&&\ddots\ar@{-->}[rd]&\\
&&&W_{i_{\max}}
}
\end{equation} 
This means that the closure of the general fiber of the map $\pi_i$ is a rationally connected variety and it makes sense to study when $\pi_{i_{\max}}$ is a \emph{maximal} rationally connected fibration. This problem is posed, for example, in \cite{K}. 

The definition of maximal rationally connected fibration (usually abbreviated by MRC) is the following.
\begin{defn}\label{definitionMRC}
A dominant
rational map $X\dashrightarrow Z$ is MRC if there exist an open subset $X_0$ of $X$ and an open subset $Z_0$ of $Z$ such
that
\begin{itemize}
\item[a)] the induced map $X_0\to Z_0$ is
a proper morphism,
\item[b)] a general fiber is irreducible and rationally connected,
\item[c)] all rational curves which meet a general fiber $F$ are contained in $F$
\end{itemize}
\end{defn}
It is known that given a variety $X$, then a MRC fibration  $X\dashrightarrow Z$ always exists and it is unique up to birational morphism, see for example \cite[Theorem 4.8]{L}.

We also have a notion of relative MRC fibration as follows.
\begin{defn}
	Let $f \colon X \to Y$ be a morphism between smooth projective varieties. A relative MRC fibration is a dominant rational map $ \pi\colon X\dashrightarrow Z$ to a smooth projective variety $Z$ over $ Y$ such that for a general point $y\in Y$, the induced map $\pi_y\colon X_y \dashrightarrow Z_y$ is an MRC fibration.
\end{defn}
This also always exists, see \cite[Theorem 4.20]{L}.

In our relative setting $X\to B$ note that, since $B$ is a curve, the two notions coincide if the genus of $B$ is $g(B)>0$.

We consider the Harder-Narasimhan filtration of the relative tangent sheaf $T_{X/B}$
$$
0= \sF_0 \subsetneq \sF_1 \subsetneq\dots\subsetneq \sF_r =T_{X/B}
$$ 
The $W_i$ constructed as above in Diagram (\ref{hnmrc}), now fit a diagram as follows
\begin{equation}
	\label{hnmrcB}
	\xymatrix{
		X\ar@{-->}_{\pi_1}[rd]\ar[dddd]\ar@{-->}^{\pi_{i_{\max}}}@/^1.5pc/[dddrrr]&&&\\
		&W_1\ar@{-->}[rd]\ar[dddl]&&\\
		&&\ddots\ar@{-->}[rd]&\\
		&&&W_{i_{\max}}\ar[dlll]\\
		B
	}
\end{equation} 
While in general it is not known when $\pi_{i_{\max}}$ is MRC fibration, in our case, helped by the study of the local systems, we can prove the following
\begin{thm}
	\label{mrc}
	Let $f\colon X\to B$ be a semistable fibration and  
	$$
	0= \sF_0 \subsetneq \sF_1 \subsetneq\dots\subsetneq \sF_r =T_{X/B}
	$$ the Harder-Narasimhan filtration of the relative tangent sheaf of $f$ determined by a suitable polarization.
	Assume that $\mD_{X}$ is of Castelnuovo-type of order $m$.
	If the foliation $\sF_{i_{\max}}$ has generic rank $n-m$, then $X\dashrightarrow W_{i_{\max}}$ is the relative MRC fibration of $f$. Furthermore if $B$ is not $\mP^1$, $X\dashrightarrow W_{i_{\max}}$ is the MRC fibration of $X$.
\end{thm}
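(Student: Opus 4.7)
The plan is to show that $\sF_{i_{\max}}$ coincides with the foliation $\sF_{\mD_X} = \beta(\mD_X)$ produced from the Castelnuovo-type local system $\mD_X$, so that $X \dashrightarrow W_{i_{\max}}$ is birationally the factorization built in Theorem \ref{dxsx}; the MRC property is then read off from the $1$-form structure on the target. By \cite[Proposition 3.8, Corollary 3.10]{K} the foliation $\sF_{i_{\max}}$ is algebraically integrable with rationally connected general leaf $F$ of dimension $n-m$, so in particular $h^0(F,\Omega^1_F)=0$. Proposition \ref{uguaglianzalocalsys} then gives $\alpha(\sF_{i_{\max}}) = \mL_{\sF_{i_{\max}}} = \mD_X$. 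On the other side, Corollary \ref{castel1} applied to the Castelnuovo-type $\mD_X$ produces an algebraically integrable foliation $\sF_{\mD_X}$ of rank $n-m$, realized after a base change $\widetilde{B}\to B$ as $T_{\widetilde{X}/\widetilde{Y}}$ where $\dim\widetilde{Y}=m$.

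To identify $\sF_{i_{\max}}$ with $\sF_{\mD_X}$, I use the definition (\ref{defF}): $\sF_{\mD_X}$ is the saturation in $T_{X/B}$ of the sheaf of vectors annihilating every closed $1$-form in $\sS_f\subset\Omega^1_X$. Each such $1$-form restricts trivially to a rationally connected leaf of $\sF_{i_{\max}}$ (as $h^0=0$ on the leaf), and hence is annihilated by every tangent vector to the leaf; this gives $\sF_{i_{\max}} \subseteq \sF_{\mD_X}$. Equality follows because both sheaves are saturated in $T_{X/B}$ and share the same generic rank $n-m$. By Theorem \ref{dxsx}, $W_{i_{\max}}$ is therefore birationally identified with the variety $Y'$ associated to $\sF_{\mD_X}$, and with $\widetilde{Y}$ after the base change.

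For the MRC assertion, by compatibility of the relative MRC with base change it suffices to show that $\widetilde{X}\to\widetilde{Y}$ is the relative MRC of $\widetilde{f}\colon\widetilde{X}\to\widetilde{B}$. Its general fibers are rationally connected (they are leaves of $\sF_{i_{\max}}$), and what remains is to show that the general fiber $G$ of $\widetilde{Y}\to\widetilde{B}$, of dimension $m-1$, is not uniruled. From the construction in Theorem \ref{castel}, $\widetilde{Y}$ carries at least $m+1$ linearly independent closed $1$-forms lifting sections of $\mD_X$, and the Castelnuovo-type hypothesis forces any $m-1$ of them to wedge to a nonzero $(m-1)$-form on $\widetilde{Y}$. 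Restricting such a wedge to $G$ produces a nonzero section of $\omega_G$: the kernel of $\Omega^{m-1}_{\widetilde{Y}}|_G\to\omega_G$ consists of $(m-1)$-forms with a factor pulled back from $\omega_{\widetilde{B}}$, and generically one can select $m-1$ of the liftings whose restrictions to $\Omega^1_G$ remain linearly independent, so the wedge does not lie in this kernel. Hence $K_G$ is pseudo-effective and $G$ is not uniruled (by Boucksom-Demailly-P\u{a}un-Peternell); any nontrivial RC quotient $\widetilde{Y}\dashrightarrow\widetilde{Z}$ with $\dim\widetilde{Z}<m$ would force $G$ to be uniruled, contradicting this. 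Therefore $\widetilde{X}\to\widetilde{Y}$, and hence $X\dashrightarrow W_{i_{\max}}$, is the relative MRC.

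Finally, when $B\neq\mP^1$ the curve $B$ has $g(B)\geq 1$ and contains no rational curves, so every rational curve in $X$ is contracted by $f$; the absolute MRC of $X$ therefore coincides with the relative MRC over $B$, proving the last statement. The main technical obstacle is the non-uniruledness argument above, namely making precise that one can choose $m-1$ of the Castelnuovo liftings whose $(m-1)$-wedge restricts to a nonzero top form on a general fiber $G$, rather than landing in the $\omega_{\widetilde{B}}$-twisted summand of $\Omega^{m-1}_{\widetilde{Y}}|_G$.
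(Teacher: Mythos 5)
Your first half is fine and is essentially the paper's ``alternative'' argument done by hand: rational connectedness of the leaves gives regularity, Proposition \ref{uguaglianzalocalsys} gives $\alpha(\sF_{i_{\max}})=\mD_X$, and the containment-plus-equal-rank argument identifying $\sF_{i_{\max}}$ with $\beta(\mD_X)$ is exactly the fixed-point statement you could have quoted directly from Theorem \ref{fissi} (via Definition \ref{casttypef}), after which Theorem \ref{dxsx} identifies $W_{i_{\max}}$ with $Y'$. Where you genuinely diverge from the paper is the MRC step: the paper never verifies maximality directly. It invokes the \emph{existence} of the relative MRC fibration $X\dashrightarrow W$ (\cite[Theorem 4.20]{L}), uses its universal property to get $W_{i_{\max}}\dashrightarrow W$, applies Proposition \ref{uguaglianzalocalsys} to \emph{both} foliations to get $\mD_X=\mL_{\sF}=\mL_{\sF_{i_{\max}}}$, and then either reads $\dim W=m$ off Theorem \ref{dxsx} (with $W$ in the role of $Y'$) or uses Theorem \ref{fissi} to get $\sF\subseteq\beta(\mD_X)=\sF_{i_{\max}}$, forcing $W_{i_{\max}}\dashrightarrow W$ to be birational. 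This sidesteps any uniruledness analysis of the intermediate variety.

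Your direct verification has a genuine gap precisely in the non-uniruledness of $G$, and moreover the difficulty is not where you locate it. The worry about ``landing in the $\omega_{\widetilde{B}}$-twisted summand'' is already disposed of by the hypothesis: $\mD_X^{m-1}$ is by definition the image in \emph{relative} forms, so injectivity on decomposables says exactly that the $(m-1)$-wedge of independent sections restricts to a nonzero holomorphic $(m-1)$-form on the general fiber $X_b$. The real problem is the descent: what you obtain on $G$ (or on a smooth model $G'$) is a priori only a form whose \emph{pullback} to $X_b$ is holomorphic and nonzero, and for forms of degree $\geq 2$ this does not bound it away from being genuinely meromorphic downstairs --- the paper's own example after Theorem \ref{cast2}, $f^*\bigl(\tfrac{1}{y_1}dy_1\wedge dy_2\bigr)=dx_1\wedge dx_2$, shows a fibration can absorb poles. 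Add to this that $G$ may be singular and that $1$-forms on the smooth locus of a normal space need not extend over a resolution, and the step ``hence $K_{G}$ is pseudo-effective'' is not justified as written. The correct patch, consistent with Remark \ref{algebrico}, is to argue on $X_b$ itself: the restrictions of the lifted closed $1$-forms satisfy the hypotheses of the classical generalized Castelnuovo--de Franchis theorem (\cite[Theorem 1.14]{Ca2}, \cite[Theorem 5.6]{RZ4}), which yields that the image $G=\tilde{h}(X_b)$ is a variety of general type, hence not uniruled; with that input your fiberwise argument (RC fibers over a non-uniruled base is the MRC of $X_b$, and the $B\neq\mP^1$ remark) goes through.
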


\begin{proof}
We know that the relative MRC fibration $X\dashrightarrow W$ exists. By the universal property we have a map $W_{i_{\max}}\dashrightarrow W$ and we want to see that it is birational. Call as before $\sF_{i_{\max}}$ and $\sF$ the foliations associated to $X\dashrightarrow W_{i_{\max}}$ and $X\dashrightarrow W$ respectively. Since the general fiber of these maps is rationally connected, in particular it is regular, hence by Proposition \ref{uguaglianzalocalsys}, we easily deduce that the associated local systems coincide, that is 
$$
\mD_{X}=\mL_\sF=\mL_{\sF_{i_{\max}}}.
$$

By the hypothesis on the rank of $\sF_{i_{\max}}$ we know that $\dim W_{i_{\max}}=m$.
On the other hand we have just proved  that $\mD_{X}=\mL_\sF$, hence we can apply Theorem \ref{dxsx} to this local system. Note that $W$ plays the role of $Y'$ in this theorem, hence we obtain a meromorphic map $\widehat{W}\dashrightarrow W$ with $\dim W=\dim\widehat{W}$ and a bimeromorphic map $\widehat{W}\dashrightarrow \widetilde{W}$ where $\widetilde{W}$ by Theorem \ref{castel} has dimension $m$.   This shows that $\dim W= \dim W_{i_{\max}}$ and this concludes the proof.

Alternatively note that $\sF_{i_{\max}}$ is of Castelnuovo type according to Definition \ref{casttypef}, hence by Theorem \ref{fissi} it is a  fixed points of the correspondence. We conclude that $\sF\subseteq  \beta(\alpha(\sF))=\beta(\mD_{X}) =\beta(\alpha(\sF_{i_{\max}}))=\sF_{i_{\max}}$. From the rational map $W_{i_{\max}}\dashrightarrow W$ we also have the opposite inclusion hence $\sF_{i_{\max}}=\sF$ and $W_{i_{\max}}$ is birational to $W$.
\end{proof}

\section{A Castelnuovo-de Franchis theorem for $p$-forms}
\label{sez6}

The generalizations of the Castelnuovo-de Franchis theorem mentioned in the previous sections only consider $1$-forms, in this section we study a more general case that will involve the study of $p$ forms.

\subsection{New notion of strictness and Castelnuovo-de Franchis theorem for $p$-forms}
We need a new setup.
Let $X$ be an $n$-dimensional smooth variety and $\omega_1,\dots, \omega_l \in H^0 (X,\Omega^p_X)$, $l\geq p+1$, linearly independent $p$-forms such that $\omega_i\wedge\omega_j=0$ (as an element of $\bigwedge^2\Omega^p_X$) for any choice of $i,j$. If this is the case then obviously the $\omega_i$ generate a subsheaf of $\Omega^p_X$ generically of rank 1; more concretely the quotients $\omega_i/\omega_j$ define global meromorphic functions on $X$. By taking the differential $d (\omega_i/\omega_j)$ we then get global meromorphic 1-forms on $X$ and the conditions we are interested in involves these 1-forms as follows.

\begin{defn}\label{definizionestrict}
We say that the set $\{\omega_1,\dots, \omega_l\}$ as above is  $p$-strict if there are $p$ of these meromorphic differential forms $d (\omega_i/\omega_j)$ that do not wedge to zero. We say that the vector subspace generated by $\{\omega_1,\dots, \omega_l\}$ is a $p$-strict vector subspace.
\end{defn}

The choice of the term strict in this definition comes from the fact that this condition is analogous to the strictness condition considered in \cite[Definition 2.1 and 2.2]{Ca2}, \cite[Definition 4.4]{RZ4}.

\begin{thm}
	\label{cast2}
	Let $X$ be an $n$-dimensional smooth variety and let $\{\omega_1,\dots, \omega_l \}\subset H^0 (X,\Omega^p_X)$ be a $p$-strict set. Then there exists a rational map $f\colon X\dashrightarrow Y$ over a $p$-dimensional smooth variety $Y$ such that the $\omega_i$ are pullback of some meromorphic $p$-forms $\eta_i$ on $Y$, $\omega_i=f^*\eta_i$, where $i=1,\cdots , l$.
\end{thm}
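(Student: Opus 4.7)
The plan is to reduce the claim to the classical (generalised) Castelnuovo--de Franchis theorem for closed $1$-forms, applied to the meromorphic differentials $d(\omega_i/\omega_j)$. The crucial auxiliary fact is that holomorphic $p$-forms on a smooth projective variety are automatically closed (Hodge theory), which is what powers the reduction.

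First, since $\omega_i\wedge\omega_j=0$ in $\bigwedge^2\Omega^p_X$, the forms generate a rank-$1$ subsheaf and one writes $\omega_i=\phi_i\omega_1$ meromorphically, with $\phi_1=1$. Differentiating and using $d\omega_i=d\omega_1=0$ yields $d\phi_i\wedge\omega_1=0$ and, more generally, $d(\omega_i/\omega_j)\wedge\omega_1=0$, because $d(\omega_i/\omega_j)$ lies in the span of $d\phi_i$ and $d\phi_j$. Next, pointwise linear algebra on the kernel $K_x$ of the wedge map $\omega_1(x)\wedge(\cdot)\colon\Omega^1_{X,x}\to\Omega^{p+1}_{X,x}$ gives $\dim K_x\le p$, with equality if and only if $\omega_1(x)$ is decomposable. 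The $p$-strictness hypothesis places at least $p$ independent $d(\omega_i/\omega_j)$ inside $K_x$ at a generic point, so $\dim K_x=p$ exactly, $\omega_1$ is (generically) decomposable, and its factors span the same $p$-dimensional subspace as the $d(\omega_i/\omega_j)$. In particular any $p+1$ of the $d(\omega_i/\omega_j)$ wedge to zero, and these closed meromorphic $1$-forms satisfy the hypotheses of the classical generalised Castelnuovo--de Franchis theorem (e.g.\ \cite[Theorem 1.14]{Ca2}, \cite[Theorem 5.6]{RZ4}). This produces the desired rational map $f\colon X\dashrightarrow Y$ onto a $p$-dimensional (normal, then resolved to smooth) variety such that every $\omega_i/\omega_j$ is a pullback from $Y$. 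To recover the $\omega_i$ themselves, pick a basis $d\psi_1,\ldots,d\psi_p$ of the span; decomposability lets me write $\omega_1=g\,d\psi_1\wedge\cdots\wedge d\psi_p$ for a meromorphic $g$, and $d\omega_1=0$ gives $dg\wedge d\psi_1\wedge\cdots\wedge d\psi_p=0$, so $dg$ is itself in the span of the $d\psi_k$'s and $g$ factors through $f$. Hence $\omega_1=f^*\eta_1$ for a meromorphic $p$-form $\eta_1$ on $Y$, and $\omega_i=\phi_i\omega_1=f^*\eta_i$ with $\eta_i$ the appropriate meromorphic multiple of $\eta_1$.

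The main obstacle I expect is the meromorphic-versus-holomorphic bookkeeping. The functions $\omega_i/\omega_j$ and their differentials are only meromorphic, the decomposability of $\omega_1$ holds only on an open set, and the classical Castelnuovo--de Franchis theorem must be applied in a meromorphic setting rather than to honest holomorphic forms on a compact variety. In addition, one must ensure that the resulting $Y$ can be taken smooth while preserving every pullback relation, which explains why the conclusion asserts only a rational map $f$ together with meromorphic $\eta_i$, rather than a morphism producing holomorphic forms.
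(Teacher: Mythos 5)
Your proposal is correct and follows essentially the same route as the paper, just organized a little differently. The paper begins by defining the foliation $\sF$ as the (saturation of the) common kernel of the contraction maps $\iota_{\,\cdot\,}\omega_i\colon T_X\to\Omega^{p-1}_X$, takes $Y$ to be a smooth model of the field $\mC(\sF)$ of meromorphic functions constant on leaves, and then uses $p$-strictness to prove $\dim Y=p$ and to show $\omega_i=f_i\,dg_1\wedge\cdots\wedge dg_p$ with $g_j,f_i\in\mC(\sF)$. You instead isolate the clean linear-algebra fact that the annihilator $K_x=\{\alpha : \alpha\wedge\omega_1(x)=0\}$ has $\dim K_x\le p$, with equality precisely when $\omega_1(x)$ is decomposable, and observe that $p$-strictness forces equality at a generic $x$ and exhibits a spanning set of the form $dg_j=d(\omega_{s_j}/\omega_{t_j})$. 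From there the identity $\omega_1=g\,dg_1\wedge\cdots\wedge dg_p$, the descent of $g$ (via $d\omega_1=0$) and of the coefficients $\phi_i=\omega_i/\omega_1$ (via $d\phi_i\wedge\omega_1=0$) match the paper's computations exactly. The one point to flag is your appeal to the classical Castelnuovo--de Franchis theorem for the forms $d(\omega_i/\omega_j)$: that theorem is stated for holomorphic $1$-forms, and you rightly note in your last paragraph that meromorphic bookkeeping is an obstacle. Here, however, the appeal is actually unnecessary, because the $dg_j$ are \emph{exact} meromorphic $1$-forms, so the rational map to a $p$-dimensional variety is already furnished directly by the functions $(g_1,\dots,g_p)$ (followed by Stein factorization), which is in substance what the paper does by passing to $\mC(\sF)$. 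If you remove the CdF citation in favor of that direct observation, your argument is complete and aligns with the paper's.
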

\begin{proof}
Each $p$-form $\omega_i$ determines by contraction a homomorphism
$$
T_X\to \Omega^{p-1}_X
$$ and we denote by $\sF'$ the intersections of these kernels. 
Since the forms $\omega_i$ are global holomorphic forms hence closed, $\sF'$ is closed under Lie bracket. 
As we showed in Subsection \ref{localtofoliation}, the saturation $\sF$ of $\sF'$ is a foliation and it turns out to be reflexive (saturated inside reflexive is reflexive), and locally free outside a set $S$ of codimension at least 2.

We consider then $\mC(\sF)$ the field of meromorphic functions on $X$ which are constant on the leaves of $\sF$. We take a smooth birational model $Y$ for $\mC(\sF)$. From $\mC(\sF)\subset \mC(X)$ we get a rational morphism 
$$
f\colon X\dashrightarrow Y.
$$ Since $\sF$ is generically of rank $n-p$, $\dim Y\leq p$; we will prove that the dimension of $Y$ is exactly $p$.

 We consider the good open set $U$ where $f$ is a holomorphic submersion and not all the $\omega_i$ vanish.
By our hypothesis of strictness of the $\omega_i$, there exists a point $x\in U$ and $p$ meromorphic functions $g_i$ defined as 
$$
g_i=\omega_{s_i}/\omega_{t_i}
$$ for $i=1,\dots,p$ and certain $\omega_{s_i}, \omega_{t_i}$ such that 
$$
dg_1\wedge dg_2\wedge\dots\wedge dg_p\neq 0 
$$ at $x$.

Now consider, without loss of generality, the condition $g_1=\omega_{s_1}/\omega_{t_1}$, which written as $\omega_{t_1}g_1=\omega_{s_1}$ gives
$$
\omega_{t_1}\wedge dg_1=0 
$$ by the fact that the $\omega_i$ are all closed. This, together with the hypothesis that $\omega_i\wedge\omega_j=0$ for every $i,j$, implies that  on a suitable open subset the meromorphic section of $\bigwedge^2\Omega^p_X$ given by
$$
\omega_{t_1}\bigwedge dg_1\wedge dg_2\wedge\dots\wedge dg_p=0
$$ is zero  (here of course we use that $dg_1\wedge dg_2\wedge\dots\wedge dg_p\neq0$). 

From $\omega_i\wedge\omega_j=0$ it follows that, again on an appropriate open subset of $X$, 
$$
\omega_i=f_idg_1\wedge dg_2\wedge\dots\wedge dg_p
$$ for $i=1,\dots,l$.
Since the forms $dg_1\wedge\dots\wedge \widehat{dg_j}\wedge\dots\wedge dg_p$ are independent, the 1-forms $dg_i$ vanish on the elements of $\sF$, hence the $g_i$ are  locally constant on the leaves of the foliation. By the fact that the $g_i$ are global meromorphic it follows that they are also constant on the closure of the leaves and hence they are elements $g_i\in \mC(\sF)$. The same is true for the $f_i$, since $d\omega_i=0$, hence $f_i\in \mC(\sF)$.

This exactly means that the forms $\omega_i$ are pullback of meromorphic forms on $Y$ and since the wedge $dg_1\wedge dg_2\wedge\dots\wedge dg_p$ is not zero, we have that $\dim Y=p$ and the leaves are closed on a good open subset of $X$.
\end{proof}
\begin{rmk}
We point out the main difference with the classical case of 1-forms, that is when $p=1$. In this case, if $\omega$ is a holomorphic 1-form on $X$, pullback on a suitable open subset of a meromorphic 1-form $\eta$ on $Y$, by a local computation it turns out that $\eta$ must also be holomorphic, as the pullback can not get rid of the poles of $\eta$. In the general case however, that is $p\geq2$, it may very well happen that the pullback of a meromorphic form on $Y$ is holomorphic on $X$, as the following local example easily shows.

Let consider $p=2$  and a map locally given in coordinates $x_1,x_2,x_3$ by
$$
f(x_1,x_2,x_3)=(x_1,x_1x_2).
$$ Denoting $y_1=x_1$ and $y_2=x_1x_2$ it immediately follows that the pullback of the meromorphic form $\frac{1}{y_1}dy_1\wedge dy_2$ is holomorphic:
$$
f^*\frac{1}{y_1}dy_1\wedge dy_2=dx_1\wedge dx_2.
$$ 
\end{rmk}

Of course in some cases it may still very well be that $f\colon X\dashrightarrow Y$ is actually a morphism.

Denote by $\sL\hookrightarrow \Omega_X^p$ the subsheaf  generated by the $p$-forms $\omega_i$. Under the hypotheses of the Castelnuovo theorem \ref{cast2} we have that $\sL$ has generic rank 1. Its double dual $\sL^{\vee\vee}$ is also of generic rank 1 but it is also reflexive, therefore it is an invertible sheaf by \cite[Proposition 1.9]{har}.
\subsection{Application to the Iitaka Fibration}

We show that under suitable hypothesis, the map given by the Castelnuovo Theorem \ref{cast2} is the Iitaka fibration. We start with a lemma which has its own interest.
\begin{lem}
\label{lemmasuL}
	If the p-forms $\omega_i$ satisfy the hypothesis of Theorem \ref{cast2} and furthermore they globally generate $\sL$, then $f\colon X\to Y$ is a holomorphic map onto a normal p-dimensional variety and furthermore the $\omega_i$ are pullback of global top forms on $Y$. 
\end{lem}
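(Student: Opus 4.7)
The plan is to exploit the global generation of $\sL$ both to turn the rational map of Theorem \ref{cast2} into a genuine morphism onto a normal variety, and to upgrade the meromorphic forms $\eta_i$ from that theorem into global top forms on $Y$.

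First, since the $\omega_i$ globally generate the generically rank one subsheaf $\sL\subseteq\Omega_X^p$, they define a morphism $\varphi_{\sL}\colon X\to\mP(H^0(X,\sL)^\vee)=\mP^{l-1}$ on the big open set where $\sL$ is locally free, and this extends to all of $X$ by reflexivity. Taking the Stein factorization $\varphi_{\sL}=g\circ f$ with $g\colon Y\to\overline{\varphi_{\sL}(X)}$ finite and $f\colon X\to Y$ proper with connected fibres, I obtain a normal variety $Y$ and a holomorphic map $f\colon X\to Y$. The ratios $\omega_i/\omega_j$ used in the proof of Theorem \ref{cast2} are pulled back via $\varphi_{\sL}$ from affine coordinates on $\mP^{l-1}$, so $f$ coincides birationally with the rational map of that theorem; the $p$-strictness hypothesis then forces $\dim Y=p$.

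Next, set $M:=g^*\sO_{\mP^{l-1}}(1)$, a line bundle on $Y$ satisfying $f^*M\cong\sL$ canonically, and let $\sigma_1,\ldots,\sigma_l\in H^0(Y,M)$ be the global sections obtained by descending the tautological sections of $\sO(1)$ via $g$, so that $f^*\sigma_i=\omega_i$ under the embedding $f^*M\cong\sL\hookrightarrow\Omega_X^p$. To recognise each $\sigma_i$ as a top form on $Y$, I would restrict to the open subset $V\subseteq Y_{\mathrm{sm}}$ over which $f$ is a smooth submersion; this is nonempty by generic smoothness. On $f^{-1}(V)$ the line subbundle $f^*\omega_V\subset\Omega_X^p$ consists precisely of the $p$-forms vanishing on $T_{X/V}=\sF|_{f^{-1}(V)}$, and by construction the $\omega_i$ belong to this subbundle. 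Since they still generate $\sL|_{f^{-1}(V)}$ fibrewise, one obtains $\sL|_{f^{-1}(V)}=f^*\omega_V$, and by faithfully flat descent $M|_V\cong\omega_V$. The induced inclusion $M|_V\hookrightarrow\omega_Y|_V$ extends uniquely to an inclusion $M\hookrightarrow\omega_Y$ on all of $Y$ by reflexivity of both sheaves on the normal $Y$, and then each $\sigma_i$ becomes a global section $\eta_i\in H^0(Y,\omega_Y)$ with $f^*\eta_i=\omega_i$, as required.

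The main obstacle I anticipate is precisely this last reflexive extension: one has to verify that the identification $M|_V\cong\omega_Y|_V$ over the submersion locus really does extend across the branch locus $Y\setminus V$, which in general has only codimension one, to a well-defined inclusion of sheaves on all of $Y$. The key point is that $M$ is a line bundle on the normal variety $Y$ and $\omega_Y$ is the reflexive dualizing sheaf, so any morphism of coherent sheaves between them defined on an open dense subset extends uniquely to all of $Y$; torsion-freeness of $\omega_Y$ then guarantees that the extension remains injective, so that sections of $M$ embed in sections of $\omega_Y$ without acquiring poles along the branch divisor.
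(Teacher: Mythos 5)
Your construction up to the submersion locus is reasonable in outline, but the last step — extending the identification $M|_V\hookrightarrow\omega_Y|_V$ from $V$ to all of $Y$ — is exactly where the proof breaks, and the principle you invoke there is false. Reflexivity (or normality of $Y$) gives a Hartogs-type extension of sheaf homomorphisms only across closed subsets of codimension at least two, whereas $Y\setminus V$ contains the discriminant of $f$, which in general is a divisor. A homomorphism of invertible (hence reflexive) sheaves defined off a divisor need not extend: multiplication by $1/t$ from $\sO$ to $\sO$ on the complement of $\{t=0\}$ already fails. So your argument does not exclude that the descended forms $\eta_i$ acquire poles along the divisorial components of the discriminant, and for $p\geq 2$ this is a genuine phenomenon rather than a technicality: it is precisely the point of the example given after Theorem \ref{cast2}, where the holomorphic form $dx_1\wedge dx_2$ is the pullback of $\frac{1}{y_1}dy_1\wedge dy_2$. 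Ruling this out is the actual content of Lemma \ref{lemmasuL}, and your write-up replaces it with an extension principle that does not hold.

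This is also where the global generation hypothesis must be used beyond merely defining $\varphi_{|\sL|}$: since some $\omega_i$ is nonzero at every point, the foliation $\sF$ cut out by contraction with the $\omega_i$ and its normal sheaf $N$ are locally free, $f$ can be taken to be the morphism induced by this regular algebraically integrable foliation (Druel), and the cokernel $K$ of $N\to f^*T_Y$ is supported in codimension at least two \emph{on $X$}. The paper then compares determinants on $X$, where the bad locus really does have codimension two, obtaining $\sL\cong\det N^\vee\cong f^*\omega_Y^{\vee\vee}$ globally, and concludes via the projection formula $H^0(X,\sL)=H^0(Y,\omega_Y)$. Your argument never exploits the regularity of the foliation, and a comparison carried out only on $Y$ over the submersion locus cannot produce the needed statement across the discriminant. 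Two smaller points: $\varphi_{|\sL|}$ is a morphism on all of $X$ not ``by reflexivity'' but because pointwise generation makes $\sL$ an invertible subsheaf with base-point-free sections; and the identification of your Stein factorization with the map of Theorem \ref{cast2} (hence $\dim Y=p$ and $T_{X/V}\subseteq\sF$) itself relies on facts from that proof, namely that the ratios $\omega_i/\omega_j$ are first integrals of $\sF$ and that $\sF$ has generic rank $n-p$.
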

\begin{proof}
The foliation $\sF$ defined as in the proof of Theorem \ref{cast2}, that is as the kernel of these sections, is locally free. Furthermore the normal sheaf $N$ to the foliation given by 
\begin{equation}
	0\to \sF\to T_X\to N\to 0,
\end{equation} is also locally free. In this setting it is well known that the foliation is induced by a morphism $f\colon X\to Y$ onto a normal variety $Y$, for example see: \cite[Section 6]{Dr} and the therein quoted bibliography.

We have the following diagram
\begin{equation}
	\xymatrix{
		0\ar[r]&\sF\ar@{=}[d]\ar[r]&T_X\ar@{=}[d]\ar[r]&N\ar@{^{(}->}[d]\ar[r]&0\\
		0\ar[r]&T_{X/Y}\ar[r]&T_X\ar[r]&f^*T_Y\ar[r]&K\ar[r]&0
	}
\end{equation} In particular in the exact sequence
\begin{equation}
	0\to N\to f^*T_Y\to K\to 0 
\end{equation} $K$ is supported on a locus of codimension at least 2, by our hypothesis on the $\omega_i$'s. By dualisation it follows that:
$$
\det N^\vee\cong f^*\omega_Y^{\vee\vee}.
$$ Now it is not difficult to see that $\det N^\vee\cong \sL$. In fact the $\omega_i$'s, which generate $\sL$, are of course in $\det N^\vee$. Viceversa. Every section of $\det N^\vee$ vanishes on $\sF$, hence it is in the generated of the $\omega_i$'s, again by the fact that $\sF$ is defined as the kernel of these sections. Moreover for every point $x$ of $X$ there is at least one $\omega_i$ which is non zero at $x$. Hence we have 
$$
\sL\cong\det N^\vee\cong f^*\omega_Y^{\vee\vee}.
$$ We consider the spaces of global sections and by the projection formula we have:
$$
 H^0(X,\sL)=H^0(X,f^*\omega_Y^{\vee\vee})=H^0(Y,\omega_Y^{\vee\vee})=H^0(Y,\omega_Y)
$$ Hence the $\omega_i$ are pullback of top forms of $Y$.
\end{proof}

\begin{thm}\label{Iitakauno}
	Let $X$ be a smooth variety of dimension $n$. Assume that $\sL$ is globally generated by a $p$-strict subset. If the Kodaira dimension $\Kod X=\Kod \sL=p<n$ then the Stein factorization of $\varphi_{|\sL|}\colon X\to \mP(H^0(X,\sL))$ induces the $K_X$-Iitaka fibration.
\end{thm}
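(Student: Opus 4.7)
The proof should be a direct application of Lemma \ref{lemmasuL} combined with the characterization of the Iitaka fibration via subadditivity of Kodaira dimension. First I would invoke Lemma \ref{lemmasuL} on the globally generating $p$-strict subset of $H^{0}(X,\sL)$; this produces a surjective morphism $f\colon X\to Y$ onto a normal $p$-dimensional variety, together with a natural isomorphism $\sL\cong f^{*}\omega_{Y}^{\vee\vee}$. Since $Y$ is normal, $\omega_{Y}$ is already reflexive, so $\omega_{Y}^{\vee\vee}=\omega_{Y}$, and by the projection formula (for reflexive sheaves) we obtain $H^{0}(X,\sL)=H^{0}(Y,\omega_{Y})$. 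Consequently the complete linear system $|\sL|$ on $X$ is pulled back from $|\omega_{Y}|$ on $Y$, and $\varphi_{|\sL|}$ factors as $\varphi_{|\omega_{Y}|}\circ f$.

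Next I would identify the Stein factorization of $\varphi_{|\sL|}$ with $f$ up to birational equivalence. Since $\omega_{Y}$ is globally generated (the pull-back generators $\omega_{i}$ descend to generators $\eta_{i}$ of $\omega_{Y}$) and the hypothesis $\Kod\sL=p=\dim Y$ translates into $\omega_{Y}$ being big, the morphism $\varphi_{|\omega_{Y}|}\colon Y\to\mP(H^{0}(Y,\omega_{Y})^{\vee})$ is generically finite onto its image; in particular its Stein factorization $Y\to Y_{1}$ is birational. Combined with the fact that $f$ has connected fibers, the Stein factorization of $\varphi_{|\sL|}$ is $X\to Y_{1}$, which is therefore birationally equivalent to $f\colon X\to Y$.

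Finally I would verify that $f$ satisfies the defining properties of the Iitaka fibration of $K_{X}$. The first condition, $\dim Y=\Kod X$, is part of the hypothesis. For the second, that a general fiber $F$ of $f$ has $\Kod F=0$, I would combine the easy addition inequality of Iitaka, $\Kod X\le\dim Y+\Kod F$, which yields $\Kod F\ge 0$, together with Viehweg's theorem that the subadditivity conjecture $C_{n,m}$ holds when the base is of general type; the latter gives $\Kod X\ge\Kod Y+\Kod F=p+\Kod F$, hence $\Kod F\le 0$. Combining both inequalities, $\Kod F=0$, so $f$ is a fibration with $\dim Y=\Kod X$ and general fiber of Kodaira dimension $0$; by uniqueness of the Iitaka fibration up to birational equivalence, $f$ coincides with the $K_{X}$-Iitaka fibration.

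The step I expect to be the main obstacle is the clean application of Viehweg's additivity in this setting, since the base $Y$ produced by Lemma \ref{lemmasuL} is only normal rather than smooth: one must pass to a smooth resolution $\widetilde{Y}\to Y$ (and possibly to a birational modification of $X$ to restore smoothness of the induced fibration) while preserving the equality $\Kod\widetilde{Y}=\dim\widetilde{Y}=p$ so that Viehweg's theorem applies. A related minor subtlety is the manipulation of the reflexive hull $\omega_{Y}^{\vee\vee}$ on the normal variety $Y$ and the corresponding projection formula for reflexive sheaves—standard, but worth being precise about.
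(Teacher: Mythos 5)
Your proof is correct, but it takes a genuinely different route from the paper after the common first step (applying Lemma \ref{lemmasuL} to obtain $f\colon X\to Y$ with $\sL\cong f^{*}\omega_Y^{\vee\vee}$, hence $Y$ normal of dimension $p$ and of general type). You then compute the Kodaira dimension of the general fiber $F$ of $f$ directly, pinching it between $0\le\Kod F$ from Iitaka's easy addition and $\Kod F\le 0$ from Viehweg's subadditivity theorem $C_{n,m}$ for bases of general type, concluding $\Kod F=0$ and invoking uniqueness of the Iitaka fibration. The paper instead starts from the Iitaka fibration $\phi_\infty\colon X_\infty\to Z_\infty$ and studies the induced map $f_\infty\colon X_\infty\to Y$: it observes that for a very general $z$, the image $f_\infty(X_z)$ of the fiber $X_z$ (which has $\Kod X_z=0$) would, if positive-dimensional, be a subvariety of general type because it moves in a covering family of the general-type variety $Y$ — a contradiction; hence $f_\infty$ contracts the fibers of $\phi_\infty$, and the rigidity lemma produces a map $Z_\infty\to Y$, after which one again invokes uniqueness. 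The paper's argument thus avoids Viehweg's $C_{n,m}$ at the cost of using the fact that the general member of a covering family of a variety of general type is of general type, and the rigidity lemma; yours is arguably more self-contained conceptually but relies on a deeper external theorem. Your concern about $Y$ being only normal is warranted but standard: one passes to a smooth model of $Y$ and a compatible smooth modification of $X$ before applying the subadditivity theorems, which leaves $\Kod$, $\dim$, and birational equivalence classes unchanged.
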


\begin{proof}
By the previous Corollary and by \cite[Theorem 2.1.33]{Laz} we have the diagram
	
\begin{equation}
\xymatrix{
&Y\ar[dl]&X\ar_{\phi_k}@{-->}[dd]\ar^{f}[l]&X_\infty\ar_{\phi_\infty}[dd]\ar^{u_\infty}[l]\ar_{f_\infty}@/_1.4pc/[ll]\\
\mP(H^0(X,\sL))&&&\\
&&Z_k&Z_\infty\ar_{\nu_k}[l]
}
\end{equation}
where $\phi_k$ is the map given by $|K_X|$ and $\phi_\infty$ is the $K_X$-Iitaka fibration.

By the previous Lemma \ref{lemmasuL}, we know that $\sL=f^*\omega_Y^{\vee\vee}$ hence $\Kod Y=\Kod\sL=p$ and $Y$ is of general type.

The very general fiber $X_z$ of $\phi_\infty$ is of Kodaira dimension 0 and we obtain a morphism
$$
f_\infty|_{X_z}\colon X_z\to Y
$$ between a variety of Kodaira dimension 0 to a variety of general type. There are two cases.
Assume $\dim f_\infty(X_z)>0$. Then $f_\infty(X_z)$ must be a variety of general type if $z$ is a general point in $Z_\infty$. This is a contradiction since $\Kod X_z=0$.

Hence $f_\infty(X_z)$ is zero dimensional for general $z$ is in $Z_\infty$. Since the general fiber of both $f_\infty$ and $\phi_\infty$ are irreducible we obtain a map $Z_\infty\to Y$, see the Rigidity Lemma in \cite[Lemma 1.6]{KM}. 

By the unicity of the Iitaka fibration up to birational modification we obtain 
\begin{equation}
	\xymatrix{
		&Y&X\ar_{\phi_k}@{-->}[d]\ar^{f}[l]&X_\infty\ar_{\phi_\infty}[d]\ar^{u_\infty}[l]\ar_{f_\infty}@/_1.4pc/[ll]\\
		&&Z_k&Z_\infty\ar_{\nu_k}[l]\ar@/^2.5pc/@{-->}[ull]
	}
\end{equation}
hence $f_\infty\colon X_\infty \to Y$ is the Iitaka fibration.
\end{proof}
Before stating our last theorem we point out the reader that it is only conjectural since in order it to be true we need to assume the following:
\begin{enumerate}
\item let $X$ be a nonsingular projective variety over $Z$. If $K_X$ is pseudoeffective$/Z$ then $X/Z$ has a minimal model. Otherwise it has a Mori fiber space$/ Z$. 
\item Let $U/Z$ be a normal projective variety with terminal singularities and $\mathbb Q$-Cartier canonical divisor. If $K_U$ is nef$/Z$ then it is semiample, that is it is the pull-back of a divisor that is ample$/Z$
\end{enumerate}
The two assumptions put together is referred as the good minimal model conjecture; c.f.see \cite{T}.

\begin{thm}\label{Iitakadue}
Let $X$ be a smooth variety of dimension $n$. Let $\{\omega_1,\dots, \omega_m\} \subset H^0 (X,\Omega^p_X)$ be a $p$-strict subset and let $\sL\hookrightarrow \Omega_X^p$ be the sub-line bundle generically generated by them. Assume that:
\begin{enumerate}
\item $\bigoplus H^0(X, sL)$ is a finitely generated $\mathbb C$-algebra, where $\sL=\sO_X(L)$;
\item $0\leq\Kod X=\Kod \sL=p<n$;
\item $aK_X-bL$ is nef where $a,b\in\mathbb N_{>0}$.
\end{enumerate}
If the good minimal model conjecture holds for terminal projective varieties with zero Kodaira dimension up to dimension $n-p$ then the Iitaka fibration of $\sL$ is the Iitaka fibration.
\end{thm}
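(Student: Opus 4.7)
The goal is to identify the Iitaka fibration of $\sL$ with the $K_X$-Iitaka fibration. My strategy is to extract the $L$-Iitaka fibration from the $p$-strict set via Theorem \ref{cast2} (generalising the proof of Theorem \ref{Iitakauno}), and then to exploit the hypothesis that $aK_X-bL$ is nef, together with the good minimal model conjecture, to show that the general fibre of the $K_X$-Iitaka fibration is contracted by the $L$-Iitaka fibration.

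First I apply Theorem \ref{cast2} to the $p$-strict set $\{\omega_1,\dots,\omega_m\}$, obtaining a rational map $X\dashrightarrow Y$ onto a $p$-dimensional variety such that each $\omega_i$ is the pullback of a meromorphic $p$-form on $Y$. Using assumption (1), after passing to a smooth birational model $X'$ and resolving indeterminacies, this map is realised as the $L$-Iitaka fibration $f'\colon X'\to Y'$; by the argument of Lemma \ref{lemmasuL}, $\sL$ agrees in codimension one with $(f')^*\omega_{Y'}^{\vee\vee}$, so the equality $\Kod\sL=p=\dim Y'$ forces $Y'$ to be of general type. By (1) and (2) the $K_X$-Iitaka fibration $\phi\colon X_\infty\to Z_\infty$ also exists with $\dim Z_\infty=p$; after passing to a further common birational model I may assume both $f'$ and $\phi$ are morphisms from the same variety $X_\infty$, and a general fibre $F$ of $\phi$ is then smooth of dimension $n-p$ with $\Kod F=0$.

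The main step is to show that $L|_F$ has Iitaka dimension at most zero. Restricting the nef class $aK_X-bL$ yields $aK_F-bL|_F$ nef on $F$. By the good minimal model conjecture applied to $F$ (terminal, Kodaira dimension zero, dimension $n-p$), there exists a minimal model $F\dashrightarrow F_{\min}$ with $K_{F_{\min}}\sim_{\mathbb Q}0$. On a common smooth resolution $\pi\colon W\to F$, $\psi\colon W\to F_{\min}$, the negativity lemma gives $\pi^*K_F=\psi^*K_{F_{\min}}+E$ with $E$ effective and $\psi$-exceptional, so $\pi^*(aK_F-bL|_F)$ is numerically equivalent to $aE-b\pi^*L|_F$ and is nef on $W$. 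Combining this with the pseudoeffectivity of $L|_F$ (which follows from the effectivity of a multiple of $L$), a projection-formula argument using curves in $F_{\min}$ which avoid the $\psi$-exceptional locus forces the strict transform of $L|_F$ on $F_{\min}$ to be numerically trivial, so $\Kod(L|_F)\leq 0$.

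Since the restriction of $L$ to the general fibre of $\phi$ has Iitaka dimension at most zero, the $L$-Iitaka fibration $f'$ factors through $\phi$, producing a dominant rational map $Z_\infty\dashrightarrow Y'$ between $p$-dimensional varieties, which is therefore birational; thus the two Iitaka fibrations coincide. The delicate point is the third step: transferring nefness from $F$ to $F_{\min}$ across a birational but not biregular MMP and then combining pseudoeffectivity with anti-nefness to force numerical triviality of $L|_F$ on the minimal model. This requires careful use of the negativity lemma and control on which curves in $F_{\min}$ can be lifted to $W$ without meeting the $\psi$-exceptional locus, and it is the main obstacle.
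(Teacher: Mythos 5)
Your approach is correct in outline and takes a genuinely different route from the paper. Where the paper decomposes $|u_\infty^*L|$ and $|u_\infty^*(rK_X)|$ into movable and fixed parts, runs a relative MMP over the base $Z_\infty$ of the Iitaka fibration, invokes \cite[Lemma 3.4]{T} to obtain an equidimensional model with $\rho^*(K_{X^2})=(\psi^2_\infty)^*(G)$, and concludes via an intersection computation on movable curves followed by the rigidity lemma, you instead apply the good minimal model conjecture \emph{directly to a single general fibre $F$} (terminal, $\Kod F=0$, dimension $n-p$), pull everything back to a common resolution $W$ dominating $F$ and $F_{\min}$, and exploit cone duality (a pseudoeffective class pairing to zero with a class in the interior of the movable cone is numerically trivial, and an effective numerically trivial divisor is zero). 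This is cleaner in one respect: the conjecture is invoked in exactly the dimension $n-p$ where the hypothesis supplies it, whereas the paper's relative MMP over $Z_\infty$ is strictly speaking an $n$-dimensional problem. One imprecision you should repair: $F$ is a fibre of the morphism $\phi_\infty\colon X_\infty\to Z_\infty$ on a \emph{blowup} $u_\infty\colon X_\infty\to X$, and $u_\infty^*K_X|_F=K_F-E_\infty|_F$ where $K_{X_\infty}=u_\infty^*K_X+E_\infty$ with $E_\infty$ effective $u_\infty$-exceptional; so the class you can restrict is $aK_F-aE_\infty|_F-bL_\infty|_F$, not literally $aK_F-bL|_F$. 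This does not damage the argument: on $W$ you get $\bigl(aE-a\pi^*(E_\infty|_F)-b\pi^*(L_\infty|_F)\bigr)\cdot C'\geq 0$ for a general complete intersection curve $C'$ avoiding the $\psi$-exceptional locus, and since both $E_\infty|_F$ and $L_\infty|_F$ are pseudoeffective (for a very general fibre $F$ not contained in $E_\infty$ or in the base locus of $|n_0L|$), each summand is $\geq 0$, forcing both to vanish; the rest of your cone-duality argument then gives $\Kod(L_\infty|_F)=0$. Also worth noting: the worries you flag at the end (transferring nefness across the MMP, controlling curves avoiding exceptional loci) are not in fact obstacles, since you never need nefness \emph{on} $F_{\min}$ — only on $W$, where pullback preserves it — and general complete intersections of ample divisors on $F_{\min}$ automatically avoid the codimension-two image of the exceptional locus.
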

\begin{proof} Let $Y:={\rm{Proj}} \bigoplus_{n\in\mathbb N}H^0(X,nL)$.
By \cite[Chapter III Section 1.2]{B} we can assume that there exists $l\in\mathbb N$ such that $\bigoplus_{n\in\mathbb N}H^0(X,nL)$ is generated by $H^0(X,lL)$ and so the natural map $\phi\colon X\dashrightarrow Y$ is induced by $\phi_{\mid lL\mid}\colon X\dashrightarrow\mathbb P(H^0(X,lL)^\vee)$. Since $aK_X-bL$ is nef then $l\cdot(aK_X-bL)$ is nef. Hence, up to consider $l\cdot bL$ instead of $L$, from now on, we can assume that there exists $r\in\mathbb N$ such that $rK_X-L$ is nef and that $H^0(X,L)$ generates $\bigoplus_{n\in\mathbb N}H^0(X,nL)$. 
By unicity up to birational modifications of the Iitaka fibration we obtain that the rational map $f=\colon X\dashrightarrow Y\hookrightarrow \mathbb P(H^0(X,lL)^\vee)$ induced by $\phi_{\mid lL\mid}$ is birationally equivalent to a fixed algebraic fiber space $\phi_\infty\colon X_\infty\to Y_\infty$ where $X_\infty$ is smooth, $Y_\infty$ is normal and as above we obtain 
\begin{equation}
	\xymatrix{
		&&X\ar_{f}@{-->}[d]&X_\infty\ar_{\phi_\infty}[d]\ar^{u_\infty}[l]\\
		&&Y&Y_\infty\ar_{\nu_k}[l]
	}
\end{equation}

\noindent where $\phi_\infty\colon X_\infty \to Y_\infty$ is the Iitaka fibration for $L$. By construction $u_\infty^{*}(rK_X-L)$ is nef. We set $L_\infty:=u_\infty^{*}L$ and also we set:
$$
\mid L_\infty\mid=\mid M^L_\infty\mid + F_L
$$
where $F_L$ is the fixed part of $\mid L_\infty\mid$, $M^L_\infty$ is b.p.f., and it is the pull-back of a divisor from $Y_\infty$. Since the Iitaka fibration is up to birational equivalence we can also set
$$
\mid u_\infty^{*}(rK_X)\mid=\mid M\mid +F_{K_X}
$$
where $\mid M\mid$ is b.p.f. and $F_{K_X}$ is the fixed part of $\mid u_\infty^{*}(rK_X)\mid$.

Up to repeating the first step for a sufficiently high multiple $l\cdot(u_\infty^{*}(rK_X)-L_\infty)$ we can assume that $u_\infty^{*}(rK_X)-L_\infty$ is nef and that the morphism $\phi_{\mid M\mid}\colon X_\infty\to \mathbb P(H^0(X,M)^\vee)$ is factorised through a normal variety $Z_r\hookrightarrow \mathbb P(H^0(X,M)^\vee)$ via a morphism $\psi_r\colon X_\infty\to Z_r$ which has connected fiber of dimension $n-p$ and such that there exists the following commutative diagram

\begin{equation}
	\xymatrix{
		&Y_\infty&X_\infty\ar_{\psi_r}@{-->}[d]\ar^{\phi_\infty}[l]&X^1_\infty\ar_{\psi^1_\infty}[d]\ar^{v_\infty}[l]\ar_{f_\infty}@/_1.4pc/[ll]\\
		&&Z_r&Z_\infty\ar_{\nu_r}[l]\ar@/^2.5pc/@{-->}[ull]
	}
\end{equation}
where $\psi^1_\infty\colon X^1_\infty \to Z_\infty$ is the Iitaka fibration of $u_\infty^*(K_X)$, or, which is the same, that it is also the Iitaka fibration of $K_{X^1_\infty}$. In particular we can also assume $X^1_\infty $ and $Z_\infty$ to be smooth. We claim that $v_\infty^*(u_\infty^{*}(rK_X)-M^L_\infty))$ is nef on the general movable curve $C_\infty$ contained inside the general fiber of $\psi^1_\infty\colon X^1_\infty \to Z_\infty$. Indeed if
$$
v_\infty^*(u_\infty^{*}(rK_X)-M^L_\infty))\cdot C_\infty<0
$$
then letting $C:=v_{\infty_{*}}C_\infty$ it holds that
$$
(u_\infty^{*}(rK_X)-M^L_\infty))\cdot C<0
$$
On the other hand
$$
(u_\infty^{*}(rK_X)-L_\infty))\cdot C\geq 0
$$
then
$$
-F_L\cdot C>0
$$
This means that $F_L\cdot C<0$, that is $C$ is inside the support of $F_L$: a contradiction.

By the same proof it also holds that:
\begin{equation}\label{stessaprova}
	(rK_{X^1_\infty}-v_\infty^* M^L_\infty)\cdot C_\infty\geq 0
\end{equation}

By hypothesis the fibers of $\psi^1_\infty$ have dimension $n-p$ and by construction of the Iitaka fibration the general one has Kodaira dimension $0$.
Since we have assumed that the good minimal model conjecture holds for terminal projective varieties with zero Kodaira dimension up to dimension $n-p$, we can run a MMP over $Z_\infty$ in order to find a terminal variety $X^2$ and a birational map $\tau^{-1}\colon X^1_\infty\dashrightarrow X^2$ such that
\begin{equation}
	\xymatrix{
		&&X^1_\infty\ar_{\psi^1_\infty}[d]&X^2\ar_{\psi^2}[d]\ar^{\tau}@{-->}[l]\\
		&&Z_\infty&Z_\infty\ar_{=}[l]
	}
\end{equation}
and $K_{X^2}$ is $\psi^2$-nef. Moreover due to the fact that $\psi_\infty\colon X^1_\infty \to Z_\infty$ is up to birational equivalence we can assume that $\tau^{-1}\colon X^1_\infty\dashrightarrow X^2$ is actually a birational morphism. 

We stress that $\psi^2\colon X^2\to Z_\infty$ is a surjective morphism with connected fibers between normal projective varieties. Moreover $K_{X^2}$ is a $\psi^2$-nef $\mathbb Q$-Cartier divisor. Since we have assumed that the abundance conjecture holds for varieties of vanishing Kodaira dimension up to dimension $n-p$ we find that the canonical of the general fiber is torsion. By \cite[Lemma 3.4]{T} there exist birational morphisms
$\rho\colon X^2_\infty\to X^2$, $\pi\colon Z^2_\infty\to Z_\infty$, a $\mathbb Q$-Cartier divisor $G$ in $Z^2_\infty$ and an equidimensional morphism $\psi^2_\infty \colon X^2_\infty\to Z^2_\infty$ such that $\rho^*(K_{X^2})=(\psi^2_\infty)^*(G)$. We build the following commutative diagram:

\begin{equation}\label{grandediagramma}
	\xymatrix{
		Y&X\ar@{-->}[l]&&X^1_\infty\ar@{=}[ld]\ar^{\tau^{-1}}[d]&X^3_\infty\ar^{\mu}[d]\ar_{\delta}[l]\\
		Y_\infty\ar[u]&X_\infty\ar_-{\psi_r}@{-->}[d]\ar^{u_\infty}[u]\ar^{\phi_\infty}[l]&X^1_\infty\ar_-{\nu_\infty}[l]\ar^{\psi^1_\infty}[d]&X^2\ar_-{\tau}@{-->}[l]\ar^-{\psi^2}[dl]&X^2_\infty\ar^{\rho}[l]\ar^{\psi^2_\infty}[d]\\
		&Z^1&Z_\infty\ar[l]&&Z^2_\infty\ar^{\pi}[ll]
	}
\end{equation}
where we have resolved $\phi_\infty \circ v_\infty \circ\tau\circ\rho\colon X^2_\infty \dashrightarrow Y_\infty$ and again by the fact that Iitaka construction is up to a birational maps we can assume that there exist birational morphisms $\delta\colon X^3_\infty\to X^1_\infty$, $\mu\colon X^3_\infty\to X^2_\infty$ such that

\begin{equation}\label{prestocommuta}
	\xymatrix{
		&&X^1_\infty\ar_{\tau^{-1}}[d]&X^3_\infty\ar_{\mu}[d]\ar^{\delta}[l]\\
		&&X^2&X^2_\infty\ar_{\rho}[l]
	}
\end{equation}
is commutative and $f_\infty=\phi_\infty\circ\nu_\infty\circ\delta\colon X^3_\infty\to Y_\infty$ is a morphism with connected fiber.

Now a generic movable curve $C_\infty$ inside the general fiber of $\psi^1_\infty\colon X^1_\infty \to Z_\infty$ is transformed in a generic movable curve $C^2_\infty$ inside the general fiber of $\psi^2_\infty\colon X^2_\infty \to Z^2_\infty$.

We set:
\begin{equation}\label{semipresto}C_\infty=(\tau^{-1})^{*}C^{(2)}
\end{equation}
where $C^{(2)}$ is a generic movable curve inside the general fiber of $\psi^2\colon X^2 \to Z_\infty$. By construction $K_{X^2}=\tau^{-1}_{*}K_{X^1_\infty}$ since $X^2$ is terminal and $\tau^{-1}$ is a morphism. Hence:
\begin{equation}\label{presto}
	(rK_{X^2}-(\tau^{-1})_{*}v_\infty^* M^L_\infty )\cdot C^{(2)}=(\tau^{-1}_{*}(rK_{X^1_\infty}-v_\infty^* M^L_\infty)) \cdot C^{(2)}
\end{equation}
By projection formula applied to the morphism $\tau^{-1}$ it holds:
By substitution using equation \ref{semipresto}
\begin{equation}\label{prestopresto}
	(\tau^{-1}_{*}(rK_{X^1_\infty}-v_\infty^* M^L_\infty)) \cdot C^{(2)}=(rK_{X^1_\infty}-v_\infty^* M^L_\infty)\cdot (\tau^{-1})^{*}C^{(2)}
\end{equation}
By substitution using equation \ref{semipresto}

\begin{equation}\label{prestoprestopresto}
	(rK_{X^1_\infty}-v_\infty^* M^L_\infty)\cdot (\tau^{-1})^{*}C^{(2)}=(rK_{X^1_\infty}-v_\infty^* M^L_\infty)\cdot C_\infty
\end{equation}

Since by equation \ref{stessaprova} we know that $(rK_{X^1_\infty}-v_\infty^* M^L_\infty)\cdot C_\infty\geq 0$ we see by equations \ref{presto}, \ref{semipresto} and \ref{prestopresto} that

\begin{equation}\label {nontardi}
	(rK_{X^2}-(\tau^{-1})_{*}v_\infty^* M^L_\infty )\cdot C^{(2)}\geq 0
\end{equation}
Finally let $C^{(2)}_{\infty}$ be a generic movable curve inside the general fiber of $\psi^2_\infty\colon X^2_\infty \to Z^2_\infty$.
It holds that
$$
\rho^{*}(rK_{X^2}-(\tau^{-1})_{*}v_\infty^* M^L_\infty)C^{(2)}_{\infty}\geq 0
$$
On the other hand since $\rho^{*}(K_{X^2})=(\psi^2_\infty)^*(G)$ it holds that 
$$\rho^{*}(rK_{X^2})\cdot C^{(2)}_{\infty}=0.$$ Hence 
\begin{equation}\label{eccoilpresto}
	-\rho^{*}((\tau^{-1})_{*}v_\infty^* M^L_\infty)C^{(2)}_{\infty}\geq 0
\end{equation}
Since $M^L_\infty$ is movable by equation \ref{eccoilpresto} it holds that
\begin{equation}\label{eccoilprestopresto}
	\rho^{*}((\tau^{-1})_{*}v_\infty^* M^L_\infty)C^{(2)}_{\infty}=0
\end{equation}

By the commutative diagram \ref{prestocommuta} we obtain:
\begin{equation}\label{finepresto}
	\delta^* (v_\infty^* M^L_\infty)\cdot \mu^{*} C^{(2)}_{\infty}=0
\end{equation}
The equation \ref{finepresto} shows that the general fiber of $\psi^2_\infty\circ\mu\colon X^3_\infty\to Z^2_\infty$ is the general fiber of $f_\infty\colon X^3_\infty\to Y_\infty$, by the rigidity lemma we conclude. 
\end{proof}

\end{document}